\newtheorem*{sthm}{\ref{sec0.1}(i)\; Question}
\newtheorem*{sthm-b}{\ref{sec0.1}(ii)\; Question}
\newtheorem*{thm*}{Theorem}
\newtheorem{thm}{Theorem}[section]
\newtheorem*{0.13}{Theorem~\ref{0.13}}
\newtheorem*{0.14}{Theorem~\ref{0.14}}
\newtheorem*{0.17}{Theorem~\ref{0.17}}
\newtheorem{lem}[thm]{Lemma}
\newtheorem{cor}[thm]{Corollary}
\newtheorem*{lem5.8}{Lemma~\ref{lem2.14}$^\prime$}
\theoremstyle{definition}
\newtheorem{exa}[thm]{Example}
\newtheorem{exas}[thm]{Examples}
\newtheorem{que}[thm]{Question}
\newtheorem*{note}{Note}
\newtheorem{rem}[thm]{Remark}
\newtheorem{sn}[thm]{Definition}
\newtheorem*{sdt}{Standing terminology}
\newtheorem*{sht}{Standing hypothesis and terminoloy}
\newtheorem*{sh}{Standing hypothesis}
\newtheorem{term}[thm]{Terminoloy}
\numberwithin{equation}{section}
\journal{JMAA August 1, 2018}
\begin{document}

\begin{frontmatter}
\title{Almost automorphy of surjective semiflows on compact Hausdorff spaces}
\author{Xiongping Dai}
\ead{xpdai@nju.edu.cn}
\address{Department of Mathematics, Nanjing University, Nanjing 210093, People's Republic of China}

\begin{abstract}
Let $(T,X)$ with phase mapping $(t,x)\mapsto tx$ be a semiflow on a compact $\textrm{T}_2$-space $X$ with phase semigroup $T$ such that $tX=X$ for each $t$ of $T$. An $x\in X$ is called an \textit{a.a. point} if $t_nx\to y, x_n^\prime\to x^\prime$ and $t_nx_n^\prime=y$ implies $x=x^\prime$ for every net $\{t_n\}$ in $T$.
In this paper, we study the a.a. dynamics of $(T,X)$; and moreover, we present a complete proof of Veech's structure theorem for a.a. flows.
\end{abstract}

\begin{keyword}
Semiflow; almost automorphy; locally almost periodic; almost C-semigroup.

\medskip
\MSC[2010] 37B05 $\cdot$ 37B20 $\cdot$ 20M20
\end{keyword}
\end{frontmatter}


\section{Introduction}\label{sec0}
\begin{sdt}
Throughout, unless specified otherwise, we assume:
\begin{enumerate}
\item $X$ is a non-empty compact $\textrm{T}_2$ space with the compatible symmetric uniform structure $\mathscr{U}_X$.

\item Given $x\in X$ and $\varepsilon\in\mathscr{U}_X$, write
$\varepsilon[x]=\{y\in X\,|\,(x,y)\in\varepsilon\}$.

\item $\varDelta_X=\{(x,x)\,|\,x\in X\}$ is the diagonal of $X\times X$.

\item $T$ is a topological semigroup with identity $e$.
\end{enumerate}
In fact, $\mathscr{U}_X$ is exactly the family of the symmetric open neighborhoods of $\varDelta_X$ in the product space $X\times X$.
\end{sdt}
\begin{description}
\item[A.] We will say that a jointly continuous map $(t,x)\mapsto tx$ of $T\times X$ to $X$ is the phase mapping of a \textit{semiflow} with phase space $X$ and with phase semigroup $T$, denoted $(T,X)$, if
$ex=x$ for all $x\in X$ and $t(sx)=(ts)x$ for all $s,t\in T, x\in X$.
Here $(T,X)$ will be called a \textit{flow} with phase group $T$ when $T$ is a group. Based on $(T,X)$, $Tx=\{tx\,|\,t\in T\}$ is called the \textit{orbit} or \textit{motion} with initial state $x\in X$.

\item[B.] As usual, $(T,X)$ will be called a \textit{minimal semiflow} if and only if $\mathrm{cls}_XTx=X$ for all $x\in X$; and we could similarly define \textit{minimal subsets} of $(T,X)$. See, e.g., \cite{G, Fur, EEN, AD}. Since $X$ is compact here, the minimality will be equivalently described by the almost periodicity later.
\end{description}

In this paper, we shall be mostly concerned with a kind of dynamical system---``invertible semiflow'' which lies between flow and semiflow.

\begin{sn}[{cf.~\cite{AD}}]\label{sn0.1}
Let $(T,X)$ be a semiflow. Then:
\begin{enumerate}
\item $(T,X)$ is said to be \textit{surjective} if $tX=X$ for all $t\in T$. For example, when $(T,X)$ is minimal admitting an invariant Borel probability measure, then it is surjective \cite[Proposition~3.6]{AD}.

\item $(T,X)$ is called \textit{invertible} if for each $t\in T$ the transition $x\mapsto tx$ is an invertible self-map of $X$.
      In this case, $(x,t)\mapsto xt:=t^{-1}x$ of $X\times T$ to $X$, denoted $(X,T)$, is a right-action semiflow \cite[Lemma~0.6]{AD}, which is called the \textit{reflection} of $(T,X)$.
    \begin{itemize}
    \item A ``dynamical'' property is said to satisfy the \textit{reflection principle} in case this property holds for $(T,X)$ if and only if it holds for $(X,T)$.
    \end{itemize}

\item When $(T,X)$ is invertible, by $\langle T\rangle$ we denote the smallest discrete group of self homeomorphisms of $X$ with $T\subseteq\langle T\rangle$ where we have identified each $t$ in $T$ with the transition $x\mapsto tx$.
\end{enumerate}

Clearly, $(\langle T\rangle, X)$ is a flow with discrete phase group $\langle T\rangle$, which has distinct dynamics with $(T,X)$ in general (cf.~\cite{EEN, AD} for examples).
\end{sn}
\begin{description}
\item[C.] A minimal flow $(T,X)$ is called ``almost automorphic'' (a.a. for short) if there exists a point $x\in X$ such that $t_nx\to y$ and $t_n^{-1}y\to x^\prime$ implies $x=x^\prime$ for all net $\{t_n\}$ in $T$. This important notion, as a generalization of almost periodic point, was first introduced by S.~Bochner in 1955. Then fundamental properties of a.a. functions on groups and a.a. flows on compact $\textrm{T}_2$ spaces have been systematically studied by Veech (cf.~\cite{V63, V65, V77}) and then others (see, e.g., \cite{MW, Fur, SY, SSY, Mil, AM, AGN} and so on).
\end{description}

In this paper, we shall consider ``almost automorphy'' of surjective/invertible semiflows. In particular, we will present a complete self-contained proof of Veech's structure theorem for a.a. flows and a.a. abelian semiflows. Moreover, we will introduce a kind of phase semigroup\,---\,``almost C-semigroup'' that is fruitful for locally almost periodic semiflows.

\subsection{Three natural invertible semiflows}\label{sec0.1}
\begin{enumerate}
\item Let $\varphi\colon\mathbb{Z}\times X\rightarrow X$ be a discrete-time flow. Then it naturally induces an invertible semiflow $\varphi_+\colon\mathbb{Z}_+\times X\rightarrow X$ by $(t,x)\mapsto\varphi(t,x)$.

\item Let $\varphi\colon \mathbb{R}\times X\rightarrow X$ be a continuous-time flow. Then it naturally induces an invertible semiflow $\varphi_+\colon\mathbb{R}_+\times X\rightarrow X$.

\item Let $G$ be a discrete group and $T$ a subsemigroup of $G$ with $e\in T$ like $(G,T)=(\mathbb{Z},\mathbb{Z}_+)$ and  $(G,T)=(\mathbb{R},\mathbb{R}_+)$;
and let $L^\infty(G)$ be the set of all bounded $\mathbb{C}^d$-valued functions on $G$ endowed with the pointwise topology, i.e., a net $f_n\to f$ if and only if $f_n(x)\to f(x)\ \forall x\in G$, where $d\ge1$.
\begin{enumerate}
\item We can define the right translate semiflow
$\tau_R\colon T\times L^\infty(G)\rightarrow L^\infty(G)$ by $(t,f)\mapsto tf$,
where for every $t\in T$, $tf(x)=f(x t)$ for all $x\in G$.
Here the phase space $L^\infty(G)$ is not compact in general.

Given any $\xi\in L^\infty(G)$, define $H_G(\xi)$, called the hull of $\xi$, to be the pointwise closure of $G\xi$. Since $\xi$ is bounded, $H_G(\xi)$ is a compact $\textrm{T}_2$ subspace of $L^\infty(G)$. Clearly, $(T,H_G(\xi))$ is an invertible semiflow with phase semigroup $T$. However, $(T,H_T(\xi))$ need not be surjective and so not invertible.

\item Similarly, we can define a left-translate invertible semiflow $(L^\infty(G), T)$ with the same phase semigroup $T$ as follows:
$\tau_L\colon L^\infty(G)\times T\rightarrow L^\infty(G)$ by $(f,t)\mapsto ft$ where $ft(x)=f(tx)$ for all $x\in G$.

\item Moreover, $(T,L^\infty(G),T)$ is a \textit{bitransformation semigroup} in the sense that $(sf)t=s(ft)$ for all $s,t\in T$ and $f\in L^\infty(G)$.
\end{enumerate}

\textbf{(i)} Motivated by \cite[Definition~1.2.1]{V65} that is for $T=G$, here $\xi\in L^\infty(G)$ is called a right (resp.~left) \textit{$T$-a.a. function} on $G$ if for all net $\{t_n\}$ in $T$, $t_n\xi\to f\in L^\infty(G)$ implies $t_n^{-1}f\to\xi$ (resp. $\xi t_n\to f\in L^\infty(G)$ implies $ft_n^{-1}\to\xi$).

\begin{sthm}[{cf.~\cite[Theorem~1.3.1]{V65} for $T=G$}]
Let $f\in L^\infty(G)$. Does it hold that $f$ is right $T$-a.a. if and only if it is left $T$-a.a. in $L^\infty(G)$?
\end{sthm}
\begin{proof}[Proof for $T=G$] (Veech~\cite{V65})
Suppose $f$ is right $T$-a.a. and let for a net $\{t_n\,|\,n\in\Lambda\}$ in $T$, $f t_n\to g$ and $gt_n^{-1}\to h$ for some $g,h\in L^\infty(G)$. We must prove $f=h$. To this end, let $x\in G$ be any fixed. We will show $f(x)=h(x)$. At first, since $\{t_n^{-1}x\}$ is a net in $T$ for $T=G$, without loss of generality we can assume $(t_n^{-1}x)f\to g_1^{}$ and $(t_n^{-1}x)^{-1}g_1^{}\to f$ for some $g_1^{}\in L^\infty(G)$. Now for each finite subset $N$ of $G$ of cardinality $|N|$ we can choose $j$ and then $k$ in $\Lambda$ such that both
\begin{equation*}
\max_{t\in N}\left|f(t)-(t_j^{-1}x)^{-1}(t_k^{-1}x)f(t)\right|<\frac{1}{|N|}\quad \left(\textrm{or } \max_{t\in N}\left|f(t)-f(tx^{-1}t_jt_k^{-1}x)\right|<\frac{1}{|N|}\right)\leqno{(0.1.1)}
\end{equation*}
and
\begin{equation*}
\left|h(x)-ft_kt_j^{-1}(x)\right|<\frac{1}{|N|}\quad \left(\textrm{or }\left|h(x)-f(xx^{-1}t_kt_j^{-1}x)\right|<\frac{1}{|N|}\right)\leqno{(0.1.2)}
\end{equation*}
Let $\delta_N=x^{-1}t_jt_k^{-1}x$, $k=k(N)$, $j=j(N)$. Then $\{\delta_N\}$ is a net indexed by the system of finite subsets of $T$ directed by inclusion. From (0.1.1) we have $\delta_Nf\to f$ so that $\delta_N^{-1}f\to f$. It follows from (0.1.2) that $\delta_N^{-1}f(x)\to h(x)$. So $f(x)=h(x)$. Since $x\in G$ is arbitrary, $f=h$.
\end{proof}

We note that even if $T$ is a proper subgroup of $G$, $\{t_n^{-1}x\}$ in the foregoing proof need not be a net in $T$. Thus the above question is not obvious for the case $T\subsetneq G$.

\medskip
\textbf{(ii)} Recall~\cite{Neu, B62, V65} that $\xi\in L^\infty(G)$ is call an right (resp. left) \textit{$T$-a.p. function} on $G$ if $T\xi$ (resp. $\xi T$) is relatively compact in $(L^\infty(G),\|\cdot\|)$ where $\|\cdot\|$ is the supremum norm.

\begin{sthm-b}
Let $f\in L^\infty(G)$. Does it hold that $f$ is right $T$-a.p. if and only if it is left $T$-a.p. in $L^\infty(G)$?
\end{sthm-b}
\begin{proof}[Proof for $T=G$] (Due to J. von Neumann~\cite{Neu}; also see \cite[Lemma~41B]{L}) This follows from the following
\begin{thm*}[\cite{Neu}]
If $f\in L^\infty(G)$ is left $T$-a.p. and $\epsilon>0$, there exists a finite set of points $\{b_k\}$ in $T$ such that given any $b\in T$ one of the points $b_k$ can be chosen so that $|f(xby)-f(xb_ky)|<\epsilon$ for every $x$ in $T$ and $y$ in $G$.
\end{thm*}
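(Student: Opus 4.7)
The plan is to exploit compactness of $K:=\overline{fT}$ in $(L^\infty(G),\|\cdot\|)$---precisely the hypothesis of left $T$-almost-periodicity---through two successive finite approximations.

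First I would observe that for every $s\in T$ the right translation $\rho_s\colon g\mapsto gs$, $(gs)(y):=g(sy)$, is non-expansive on $L^\infty(G)$ and maps $fT$ into itself since $(ft)s=f(ts)\in fT$; by continuity $\rho_s(K)\subseteq K$. Since $K$ is totally bounded, choose a finite $\epsilon/3$-net $g_1,\ldots,g_n\in K$ and form the diagonal orbit map
\[\Phi\colon T\longrightarrow K^n,\qquad b\longmapsto(g_1b,\ldots,g_nb).\]
Its image lies in the compact product $K^n$ with metric $d\bigl((h_i),(h_i')\bigr)=\max_i\|h_i-h_i'\|_\infty$, hence is itself totally bounded. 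Pick a finite set $b_1,\ldots,b_m\in T$ so that for every $b\in T$ some $b_k$ satisfies $\max_i\|g_ib-g_ib_k\|_\infty<\epsilon/3$.

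To finish, fix $b\in T$, take the corresponding $b_k$, and pick any $x\in T$. Since $fx\in fT\subseteq K$, choose $g_i$ with $\|fx-g_i\|_\infty<\epsilon/3$; then a three-$\epsilon$ triangle argument, together with the non-expansiveness of $\rho_b$ and $\rho_{b_k}$, gives
\[\|(fx)b-(fx)b_k\|_\infty\le\|(fx)b-g_ib\|_\infty+\|g_ib-g_ib_k\|_\infty+\|g_ib_k-(fx)b_k\|_\infty<\epsilon.\]
Evaluating at $y\in G$ and using $(fx)b(y)=f(xby)$ yields $|f(xby)-f(xb_ky)|<\epsilon$ for all $x\in T,\,y\in G$, as required.

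The only real (though mild) difficulty is producing a \emph{single} finite set $\{b_k\}$ that handles every $b\in T$ simultaneously across all left translates $fx$, $x\in T$; the diagonal-orbit device applied to the $\epsilon/3$-net $\{g_i\}$ of $K$, combined with the three-$\epsilon$ inequality, is exactly the trick that clears this hurdle.
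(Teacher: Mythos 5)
Your proof is correct and follows essentially the same route as the paper's: a finite norm-net of (the closure of) $fT$, a finite set $\{b_k\}$ chosen so that every $b\in T$ agrees with some $b_k$ to within a small error \emph{simultaneously} on all net points, and then the three-$\epsilon$ triangle inequality using non-expansiveness of right translation. The only difference is cosmetic: you extract $\{b_k\}$ from total boundedness of the diagonal orbit $b\mapsto(g_1b,\dots,g_nb)$ in $K^n$, whereas the paper pigeonholes on the finitely many assignments $i\mapsto j(i)$ recording which net point $fa_ib$ lands near -- two interchangeable packagings of the same finiteness argument.
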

Indeed, let the finite set $\{fa_i\}$ be $\epsilon/4$-dense in $fT$. Given $b\in T$ and $i$, $fa_ib$ is within a distance $\epsilon/4$ of $fa_j$ for some $j$, and letting $i$ vary we obtain an integer-valued function $i\mapsto j(i)$ such that $\|fa_ib-fa_{j(i)}\|<\epsilon/4$ for every $i$. For each such integer-valued function $j$ let us choose one such $b\in T$ (if there is any) and let $\{b_j\}$ be the finite set so obtained. Then, by the very definition of $\{b_j\}$, for every $b\in T$ there exists one of the points $b_k$ such that $\|fa_ib-fa_ib_k\|<\epsilon/2$ for all $i$. Since for any $x\in T$ we can find $a_i$ so that $\|fx-fa_i\|<\epsilon/4$, we have
\begin{gather*}
\|fxb-fxb_k\|\le\|fxb-fa_ib\|+\|fa_ib-fa_ib_k\|+\|fa_ib_k-fxb_k\|<\frac{\epsilon}{4}+\frac{\epsilon}{2}+\frac{\epsilon}{4}=\epsilon.
\end{gather*}
That is, for every $b\in T$ there exists one of the points $b_k$ such that
\begin{gather*}
|f(xby)-f(xb_ky)|<\epsilon\quad \forall x\in T\textrm{ and }y\in G.
\end{gather*}
Therefore, if $f$ is left $G$-a.p. then it is a right $G$-a.p. function on $G$.
\end{proof}
\end{enumerate}

The above two questions indicate that some important properties of functions on groups cannot be ``obviously'' extended to general flows/semiflows. In particular Question~0.1(i) shows that Veech's classical theory of a.a. functions on groups cannot be applied ``mechanically'' to general flows/semiflows.
\subsection{Basic notation}\label{sec0.2}
In order to precisely state our main theorems we will prove in this paper, we need to introduce and recall some basic notions in preparation.

\begin{sn}[{cf.~\cite{G, Fur, EEN, CD, AD}}]\label{sn0.2}
Let $(T,X)$ be a semiflow, $A\subset T$, and $x\in X$. Then:
\begin{enumerate}
\item $A$ is called (right-)\textit{syndetic} in $T$ if there is a compact subset $K$ of $T$ with $Kt\cap A\not=\emptyset$ for every $t\in T$.
(Notice that a syndetic set need not be ``relatively dense'' in $T$ in the sense of Veech (cf.~\cite[Definition~2.1.1]{V65} and also see Definition~\ref{sn2.11}).

\item The $x$ is called \textit{almost periodic} (a.p.) for $(T,X)$ if for every neighborhood $U$ of $x$ in $X$,
\begin{equation*}
N_T(x,U)=\{t\in T\,|\,tx\in U\}
\end{equation*}
is syndetic in $T$. If every point of $X$ is a.p., then we say $(T,X)$ is a \textit{pointwise a.p.} semiflow.

\item $(T,X)$ is called \textit{almost periodic} (a.p.) if given $\varepsilon\in\mathscr{U}_X$ there is a syndetic set $A$ in $T$ such that $Ax\subseteq\varepsilon[x]$ for all $x\in X$.
\end{enumerate}

Since {\it $x$ is an a.p. point of $(T,X)$ if and only if $\mathrm{cls}_XTx$ is a minimal set of $(T,X)$} \cite{G,Fur,AD}, whether or not $x$ is an a.p. point does not depend upon the topology on $T$. Moreover, any a.p. semiflow is a pointwise a.p. semiflow.
\end{sn}

\begin{sn}[{cf.~\cite{H44,GH}}]\label{sn0.3}
Let $(T,X)$ be a semiflow. Then:
\begin{enumerate}
\item An $x\in X$ is said to be \textit{locally almost periodic} (l.a.p. for short) if for all neighborhood $U$ of $x$ there exist a neighborhood $V$ of $x$ and a syndetic subset $A$ of $T$ with $AV\subseteq U$.

\item If $(T,X)$ is pointwise l.a.p., then $(T,X)$ is referred to as an \textit{l.a.p. semiflow}.
\end{enumerate}

Clearly, an l.a.p. point is an a.p. point. Moreover, if $(T,X)$ is a.p., then it is l.a.p. invertible
(cf.~\cite{DX, AD}).
\end{sn}

Recall that $T$ is called a ``right C-semigroup'' if and only if $\textrm{cls}_T(T\setminus Tt)$ is compact in $T$ for all $t$ in $T$ (cf.~\cite{KM,AD}).
Now we will introduce a kind of more general topological semigroup than C-semigroup.

\begin{sn}\label{sn0.4}
$\,$
\begin{enumerate}
\item $T$ is called an \textit{almost right C-semigroup} if and only if
$\{t\in T\,|\,\textrm{cls}_T(T\setminus Tt)\textrm{ is compact in }T\}$
is dense in $T$.
\item We could similarly define \textit{almost left C-semigroup}.
\item If $T$ is not only an almost right C-semigroup but also an almost left C-semigroup, then it is referred to as an \textit{almost C-semigroup}.
\end{enumerate}
\end{sn}

Clearly every topological group is a right C-semigroup and any right C-semigroup is an almost right C-semigroup like $(\mathbb{R}_+,+)$ and $(\mathbb{Z}_+,+)$ with the usual topologies. In fact, there are the following important almost right C-semigroups which are not right C-semigroups.

\begin{exas}\label{0.5}
We now construct some almost C-semigroups which are not C-semigroups.
\begin{enumerate}
\item Let $\mathbb{R}^{n\times n}$ be the space of all real $n\times n$ matrices endowed with the usual topology. Since the nonsingular matrices are open dense in $\mathbb{R}^{n\times n}$, thus $T=(\mathbb{R}^{n\times n},\circ)$ is an almost C-semigroup.

\item Let $M^n$ be a compact boundaryless Riemannian manifold of dimension $n$ and let $T$ be the semigroup of $C^0$-endomorphisms of $M^n$ with the $C^0$-topology. Since $\textrm{Diff}^1(M^n)$ is dense in $T$, thus $T$ is an almost C-semigroup under the composition of maps.

\item It is known that $\textrm{Diff}^1(M^n)$ is open in $C^1(M^n,M^n)$ under the $C^1$-topology. Let $T$ be the $C^1$-closure of $\textrm{Diff}^1(M^n)$ in $C^1(M^n,M^n)$. Then by the chain rule, $T$ is an almost C-semigroup under the $C^1$-topology.
\end{enumerate}
\end{exas}

When $T$ is discrete, then it is a right C-semigroup iff it is an almost right C-semigroup. Next it is easy to verify the following basic fact (cf., e.g., \cite[Theorem~4.11]{GH} and \cite[p.~98]{AM} for $T$ a group).

\begin{lem}\label{lem0.6}
Let $(T,X)$ be any semiflow with $T$ an almost right C-semigroup or an abelian semigroup. If $(T,X)$ is l.a.p. at some point $x_0\in X$, then each of $\mathrm{cls}_XTx_0$ is an l.a.p. point of $(T,X)$.
\end{lem}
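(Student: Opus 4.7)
Since every l.a.p.~point is a.p., the point $x_0$ is a.p., so $M:=\mathrm{cls}_XTx_0$ is a minimal subset of $(T,X)$, and by minimality $x_0\in\mathrm{cls}_XTy$ for every $y\in M$. Fix such a $y$ and an open $U\ni y$; the task is to produce an open $W\ni y$ and a syndetic $A'\subseteq T$ with $A'W\subseteq U$.

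The plan is a transfer chain. Choose $\alpha\in\mathscr{U}_X$ with $\alpha^2[y]\subseteq U$, then pick $r\in T$ with $rx_0\in\alpha[y]$ and, by the continuity of the map $x\mapsto rx$ at $x_0$, an $\alpha^*\in\mathscr{U}_X$ with $r(\alpha^*[x_0])\subseteq\alpha[rx_0]$. Applying the l.a.p.~hypothesis at $x_0$ to the neighborhood $\alpha^*[x_0]$ yields an open $V_0\ni x_0$ and a syndetic $A\subseteq T$ with $AV_0\subseteq\alpha^*[x_0]$. Since $x_0\in\mathrm{cls}_XTy$, pick $s\in T$ with $sy\in V_0$, and by the continuity of $x\mapsto sx$ at $y$ an open $W_0\ni y$ with $sW_0\subseteq V_0$. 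Chaining the inclusions,
\begin{equation*}
(rAs)W_0=r\bigl(A(sW_0)\bigr)\subseteq r(AV_0)\subseteq r\bigl(\alpha^*[x_0]\bigr)\subseteq\alpha[rx_0]\subseteq\alpha^2[y]\subseteq U,
\end{equation*}
so, taking $W:=W_0$, it suffices to verify that $A':=rAs$ is syndetic in $T$.

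The structural hypothesis on $T$ enters exactly at this last step. In the abelian case, $rAs=(rs)A$; if $K\subseteq T$ is a compact witness of the syndeticity of $A$, then $(rs)K$ is a compact witness of $(rs)A$ (by the continuity of left multiplication), so $A'$ is syndetic immediately. In the almost right C-semigroup case I would refine $s$ by taking it in the non-empty open set $\{s\in T:sy\in V_0\}$ intersected with the dense set $\{s\in T:\mathrm{cls}_T(T\setminus Ts)\text{ is compact}\}$; writing $K_0:=\mathrm{cls}_T(T\setminus Ts)$, syndeticity of $As$ on the ``bulk'' $Ts$ is immediate from the factorisation $t=t's$ together with the syndeticity of $A$ at $t'$, while on the compact exceptional set $K_0$ a finite-cover argument enlarges the compact witness, and then left multiplication by $r$ preserves syndeticity.

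I expect the almost right C-semigroup case to be the principal obstacle, since controlling the syndeticity of the right translate $As$ on the compact exceptional set $K_0$ requires a covering argument, whereas the abelian case reduces to the elementary fact that left translates of a syndetic set by any single element of $T$ are again syndetic.
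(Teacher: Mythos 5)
Your transfer chain and your treatment of the abelian case coincide with the paper's proof: there too one pulls $U$ back to a neighborhood of $x_0$, applies the l.a.p.\ hypothesis at $x_0$, pushes forward through two elements of $T$, and in the abelian case observes that $rAs=(rs)A$ is a left translate of a syndetic set, hence syndetic. The gap is in the almost right C-semigroup case, where your plan is to show that $A'=rAs$ itself is syndetic. That claim is not substantiated, and I do not see how a finite-cover argument over $K_0=\mathrm{cls}_T(T\setminus Ts)$ could deliver it: every element of $As$ lies in $Ts$, so for $t\in T\setminus Ts$ one has $K_1t\cap As\subseteq Tt\cap Ts$, and nothing in the hypotheses forces a left multiple of such a $t$ (by elements of a fixed compact set, or by anything at all) to be right-divisible by $s$, let alone to land in $As$. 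Left multiplication preserves syndeticity, but right multiplication by $s$ need not; the compactness of $K_0$ only says the set of bad $t$'s is small, not that $A'$ meets $K_1t$ for those $t$. This is precisely why the paper does \emph{not} assert that $sA\tau$ is syndetic in the almost right C case, even though it does make (and use) the corresponding assertion in the abelian case.

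The paper's proof supplies the missing ingredient by enlarging the set rather than the witness: it shows that $N=\{t\in T\,|\,t(\delta[x])\subseteq U\}\supseteq sA\tau$ is syndetic. On the bulk $T\tau$ (your $Ts$) this follows from the syndeticity of $A$ exactly as in your factorisation $t=t'\tau$. On the exceptional set it uses the fact that the target point is discretely almost periodic (it lies in the minimal set $M$), so there is a \emph{finite} $K\subseteq T$ with $Kt\cap N_T(y,U)\neq\emptyset$ for every $t$; joint continuity of the action together with compactness of $\mathrm{cls}_T(T\setminus T\tau)$ then yields a single $\delta$ such that for each exceptional $t$ some $k\in K$ satisfies $kt(\delta[y])\subseteq U$, i.e.\ $kt\in N$. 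So your instinct to run a finite-cover argument over the compact exceptional set is right, but it must be applied to certify membership in the enlarged return set $\{t\,|\,tW\subseteq U\}$, using the almost periodicity of $y$ as additional dynamical input, not to certify membership in $rAs$. With that replacement your argument goes through.
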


\begin{proof}
First we write $G=\{t\in T\,|\,\textrm{cls}_T(T\setminus Tt)\textrm{ is compact in }T\}$, which is dense in $T$ if $T$ is an almost right C-semigroup.
Let $x\in\textrm{cls}_X{Tx_0}$ and $U$ a neighborhood of $x$. Then there is an $s\in T$ such that $sx_0\in U$ so that $x_0\in s^{-1}U$. Further there exist a syndetic set $A\subset T$ and an open neighborhood $V^\prime$ of $x_0$ such that $AV^\prime\subset s^{-1}U$ and so $sAV^\prime\subset U$. Because $\textrm{cls}_X{Tx_0}$ is a minimal set of $(T,X)$, there is a $\tau\in T$ (with $\tau\in G$ if $T$ is an almost right C-semigroup) such that $\tau x\in V^\prime$, so there is an open neighborhood $V$ of $x$ such that $\tau V\subseteq V^\prime$. Then $(sA\tau)V\subseteq U$ and $sA$ is syndetic in $T$.

If $T$ is an abelian semigroup, then $sA\tau=s\tau A$ is syndetic in $T$, so $x$ is an l.a.p. point of $(T,X)$ by Definition~\ref{sn0.3}.
Next, suppose $T$ is an almost right C-semigroup and set $C=\textrm{cls}_T(T\setminus T\tau)$, which is compact in $T$ by Definition~\ref{sn0.4}. Since $x$ is also a discretely  a.p. point of $(T,X)$,
there is a finite subset $K$ of $T$ such that for each $t\in T$, $Kt\cap N_T(x,U)\not=\emptyset$. Because $T\times X\rightarrow X$ is jointly continuous and $C$ is compact, we can find a $\delta\in\mathscr{U}_X$ with $\delta[x]\subseteq V$ such that for each $t\in T\setminus T\tau$ there is some $k=k(t)\in K$ with $kt(\delta[x])\subset U$. Then $\{t\in T\,|\,t(\delta[x])\subseteq U\}$ is syndetic in $T$ and thus $x$ is an l.a.p. point of $(T,X)$.
The proof is complete.
\end{proof}

\begin{sn}\label{sn0.7}
Let $(T,X)$ be any semiflow and $x\in X$. Then:
\begin{enumerate}
\item The $x$ is called an \textit{equicontinuous point} of $(T,X)$, denoted $x\in \textrm{Equi}\,(T,X)$, if given $\varepsilon\in\mathscr{U}_X$, there is a $\delta\in\mathscr{U}_X$ such that $(x,x^\prime)\in\delta$ implies $(tx,tx^\prime)\in\varepsilon$ for all $t\in T$.

\item If $\textrm{Equi}\,(T,X)=X$, then $(T,X)$ is called \textit{equicontinuous}.
\end{enumerate}

By Definitions~\ref{sn0.2}, \ref{sn0.3} and \ref{sn0.7} we can see that
``\textit{if $x\in\textrm{Equi}\,(T,X)$ and $x$ is a.p., then $x$ is an l.a.p. point of $(T,X)$}.''
Since $X$ is compact here, then ``\textit{$(T,X)$ is equicontinuous iff given $\varepsilon\in\mathscr{U}_X$ there exists $\delta\in\mathscr{U}_X$ such that $(tx,ty)\in\varepsilon$ for all $t\in T$ whenever $x,y\in X$ with $(x,y)\in\delta$}'' (cf.~\cite[Lemma~1.1]{AD}).
Moreover, any semiflow $(T,X)$ is a.p. iff it is equicontinuous invertible (cf.~\cite{DX,AD}).
Thus the a.p. property of a semiflow is also independent of the topology of the phase semigroup $T$.
\end{sn}

\begin{lem}[{cf.~\cite[p.~35]{G03} for $T$ a group and \cite[7.5(1)]{AD} for $T$ a right C-semigroup}]\label{lem0.8}
Suppose $(T,X)$ is with $T$ an almost right C-semigroup. If $x_0\in\mathrm{Equi}\,(T,X)$ is an a.p. point, then $\mathrm{cls}_XTx_0\subseteq\mathrm{Equi}\,(T,X)$.
\end{lem}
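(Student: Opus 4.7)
The plan is to handle separately those $t\in T$ that factor as $s\tau$ for a carefully chosen $\tau\in T$, using equicontinuity at $x_0$, and those that do not, using compactness extracted from the almost-right-C-semigroup hypothesis.

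Fix $x\in\mathrm{cls}_XTx_0$ and $\varepsilon\in\mathscr{U}_X$, and pick a symmetric $\eta\in\mathscr{U}_X$ with $\eta\circ\eta\subseteq\varepsilon$. Since $x_0\in\mathrm{Equi}\,(T,X)$, there is $\delta_0\in\mathscr{U}_X$ with $(x_0,y)\in\delta_0\Rightarrow(sx_0,sy)\in\eta$ for every $s\in T$. Almost periodicity of $x_0$ makes $M:=\mathrm{cls}_XTx_0$ minimal, so $x_0\in\mathrm{cls}_XTx$. I would then combine joint continuity of the action at some $(t_0,x)$, with $t_0 x$ well inside $\delta_0[x_0]$, with the density in $T$ of $G:=\{t\in T\,|\,\mathrm{cls}_T(T\setminus Tt)\textrm{ is compact in }T\}$ to produce $\tau\in G$ satisfying $\tau x\in\delta_0[x_0]$, and moreover $\tau(\delta_1[x])\subseteq\delta_0[x_0]$ for some $\delta_1\in\mathscr{U}_X$ (the second clause follows from the first by continuity of $y\mapsto\tau y$ at $x$, after pushing $\tau x$ deep enough into $\delta_0[x_0]$).

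With $\tau$ in hand, $C:=\mathrm{cls}_T(T\setminus T\tau)$ is compact in $T$. Because $(t,y)\mapsto ty$ is jointly continuous on $T\times X$ and $C\times X$ is compact Hausdorff, the restricted action is uniformly continuous, so I can extract $\delta_2\in\mathscr{U}_X$ with $(y,y')\in\delta_2\Rightarrow(cy,cy')\in\varepsilon$ for all $c\in C$. Setting $\delta=\delta_1\cap\delta_2$, for any $x'\in\delta[x]$ and any $t\in T$ I check both cases: if $t\notin T\tau$ then $t\in T\setminus T\tau\subseteq C$ and $\delta_2$ gives $(tx,tx')\in\varepsilon$; if $t=s\tau$ then $\tau x,\tau x'\in\delta_0[x_0]$, so the choice of $\delta_0$ yields $(sx_0,s\tau x),(sx_0,s\tau x')\in\eta$, and a triangle estimate puts $(tx,tx')=(s\tau x,s\tau x')$ in $\eta\circ\eta\subseteq\varepsilon$. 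This proves $x\in\mathrm{Equi}\,(T,X)$.

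The main obstacle is the second step: locating a single $\tau$ that simultaneously lies in $G$ and steers $x$ into a prescribed neighborhood of $x_0$. This is precisely what the density of $G$, i.e., the almost-right-C-semigroup hypothesis of Definition~\ref{sn0.4}, buys. Without it one could still find some $t_0\in T$ sending $x$ near $x_0$, but $\mathrm{cls}_T(T\setminus Tt_0)$ need not be compact, and the uniform-continuity argument controlling the ``exceptional'' times $t\notin T\tau$ would collapse.
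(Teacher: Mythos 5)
Your proposal is correct and follows essentially the same route as the paper's proof: steer $x$ into an equicontinuity neighborhood of $x_0$ via some $\tau$ drawn from the dense set $G$, handle $T\tau$ through equicontinuity at $x_0$, and handle $T\setminus T\tau$ through compactness of its closure. Your version is in fact slightly more careful than the paper's, since you insert the $\eta\circ\eta\subseteq\varepsilon$ triangle step that the paper's phrase ``$ts(x,y)\in\varepsilon$'' implicitly glosses over.
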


\begin{proof}
Let $x\in\textrm{cls}_XTx_0$ and $\varepsilon\in\mathscr{U}_X$. There is an $\alpha\in\mathscr{U}_X$ such that if $y\in\alpha[x_0]$ then $t(x_0,y)\in\varepsilon$ for all $t\in T$. Since $x_0$ is a.p., there is some $s\in T$ with $sx\in\alpha[x_0]$ so that there is an $s$ in $\{t\in T\,|\,\textrm{cls}_T(T\setminus Tt)\textrm{ is compact in }T\}$ with this property and further there exists some $\beta\in\mathscr{U}_X$ such that $s(\beta[x])\subseteq\alpha[x_0]$. Thus, if $y\in\beta[x]$ then $ts(x,y)\in\varepsilon$ for all $t\in T$. Moreover, since $\textrm{cls}_T(T\setminus Ts)$ is compact in $T$, there is some $\delta\in\mathscr{U}_X$ with $\delta\subset\beta$ such that if $(x,y)\in\delta$ then $t(x,y)\in\varepsilon$ for all $t\in T\setminus Ts$ and thus $t(x,y)\in\varepsilon$ for all $t\in T$. This shows $x\in\textrm{Equi}\,(T,X)$.
\end{proof}

However, it should be noted that even if $(T,X)$ is \textit{invertible}, we do not know if $(T,\textrm{cls}_XTx_0)$ is a.p. or not in the situation of Lemma~\ref{lem0.8} where $T$ is not a group; see \cite[(3) of Remark~3.13]{AD} for a counterexample in the case where $(T,X)$ is not invertible.

\begin{sn}[{cf.~Veech~\cite[p.~741]{V65} for $T$ a group}]\label{sn0.9}
Let $(T,X)$ be a semiflow. Then:
\begin{enumerate}
\item An $x$ in $X$ is said to be \textit{almost automorphic} (a.a. for short), denoted $x\in P_{\!aa}(T,X)$, in case for all net $\{t_n\}$ in $T$,
$t_nx\to y$, $x_n^\prime\to x^\prime$ and $t_nx_n^\prime=y$ implies $x=x^\prime$.

\item If $x\in P_{\!aa}(T,X)$ and $\mathrm{cls}_XTx=X$, then $(T,X)$ will be called an \textit{a.a. semiflow}.

\item If $P_{\!aa}(T,X)=X$, then $(T,X)$ is called a \textit{pointwise a.a.} semiflow.
\end{enumerate}

We will show that every a.a. semiflow is point-distal (cf.~Lemma~\ref{1.8} in $\S\ref{sec1}$).
Now we can conclude the following basic properties.
\begin{itemize}
\item Let $(T,X)$ be a surjective semiflow. Then:
\begin{enumerate}
\item[(i)] The set $P_{\!aa}(T,X)$ is invariant; that is, if $x\in P_{\!aa}(T,X)$ then $tx\in P_{\!aa}(T,X)$ for all $t\in T$.

\item[(ii)] If $x\in P_{\!aa}(T,X)$ and $t_nx\to y$, then $t_n^{-1}[y]\to \{x\}$ in the sense of Hausdorff topology.

\item[(iii)] If $x\in P_{\!aa}(T,X)$ where $(T,X)$ is not necessarily surjective, then $(T,X)$ is 1-1 at $x$, i.e., $t^{-1}[tx]=\{x\}$ for all $t\in T$.
\end{enumerate}

\begin{proof}
$\,$\\
(i). Let $x\in P_{\!aa}(T,X)$ and $t\in T$. Assume $t_n(tx)\to y$, $x_n^{\prime\prime}\to x^{\prime\prime}$ and $t_nx_n^{\prime\prime}=y$. We need to show $x^{\prime\prime}=tx$. For this, take $x_n^\prime\in t^{-1}[x_n^{\prime\prime}]$ and we can assume $x_n^\prime\to x^\prime$ so $tx^\prime=x^{\prime\prime}$.
Since $(t_nt)x\to y, x_n^\prime\to x^\prime$ and $(t_nt)x_n^\prime=y$, we have $x=x^\prime$ and then $tx=tx^\prime=x^{\prime\prime}$. This shows $tx\in P_{\!aa}(T,X)$.

(ii). Let $x\in P_{\!aa}(T,X)$ and $t_nx\to y$. If $x_n^{\prime\prime}\in t_n^{-1}[y]$, then $x_n^{\prime\prime}\to x$. This implies that $t_n^{-1}[y]\to \{x\}$.

(iii). Let $t\in T$ and $x\in P_{\!aa}(T,X)$. For $t_n=t$, $t_nx\to y=tx$ implies $t^{-1}[tx]=t_n^{-1}[y]\to \{x\}$. Thus $t^{-1}[tx]=\{x\}$. The proof is complete.
\end{proof}
\end{itemize}

Clearly, an a.a. semiflow need not be a pointwise a.a. semiflow. It will be proven later that
\begin{itemize}
\item {\it A minimal surjective semiflow $(T,X)$ is pointwise a.a. if and only if $(T,X)$ is equicontinuous invertible} (cf.~Theorem~\ref{thm4.5}).
\end{itemize}
This theorem in the flows is due to Veech~\cite{V65,V77} by using his structure theorem (Theorem~\ref{0.13} below) and also see \cite[Theorem~7]{AGN} by using Ellis' semigroup.

When $(T,X)$ is a flow and $x\in\textrm{Equi}\,(T,X)$ is a.p., then $\textrm{cls}_XTx\subseteq P_{\!aa}(T,X)$. However, this is not obvious in the invertible semiflows (cf.~Lemma~\ref{lem0.8}).
\end{sn}

Let $(T,X)$ and $(T,Y)$ be two semiflows with the same phase semigroup $T$. $\pi\colon(T,X)\rightarrow(T,Y)$ is called an ``epimorphism'' if $\pi\colon X\rightarrow Y$ is a continuous surjective map such that $\pi(tx)=t\pi(x)$ for all $x\in X$ and $t\in T$. In this case, $(T,Y)$ is called a factor of $(T,X)$ and $(T,X)$ an extension of $(T,Y)$.

\begin{sn}
An epimorphism $\pi\colon(T,X)\rightarrow(T,Y)$ of two minimal semiflows is said to be of \textit{almost 1-1 type} if there exists $y\in Y$ such that $\pi^{-1}[y]$ is a singleton set. In this case, $(T,X)$ is called an \textit{almost 1-1 extension} of $(T,Y)$.
\end{sn}

\begin{sn}[{cf.~\cite{AD}}]\label{0.11}
Let $(T,X)$ be any semiflow and $x,y\in X$. Then:
\begin{enumerate}
\item We say $x$ is \textit{proximal} to $y$, denoted $(x,y)\in P(X)$ or $y\in P[x]$, if there is a net $\{t_n\}$ in $T$ with $\lim_nt_nx=\lim_nt_ny$.

\item We say $x$ is \textit{regionally proximal to} $x^\prime$, denoted $(x,x^\prime)\in Q(X)$ and $x^\prime\in Q[x]$, if there are nets $\{x_n\}, \{x_n^\prime\}$ in $X$ and $\{t_n\}$ in $T$ such that $x_n\to x, x_n^\prime\to x^\prime$ and $\lim_n t_nx_n=\lim_nt_nx_n^\prime$.

\item We say that $(x,y)\in Q^{-}(T,X)$ provided that given $\alpha\in\mathscr{U}_X$ and neighborhoods $U$ of $x$ and $V$ of $y$ there exist $x_1\in U$, $y_1\in V$ and $t\in T$ with $(t^{-1}[x_1]\times t^{-1}[y_1])\cap\alpha\not=\emptyset$. Then
    \begin{gather*}
    Q^{-}(T,X)={\bigcap}_{\alpha\in\mathscr{U}_X}\mathrm{cls}_{X\times X}T\alpha
    \end{gather*}
\end{enumerate}

Since $X$ is a $\textrm{T}_2$-space, it is easy to check that $x,y\in P_{\!aa}(T,X)$ and $x\not=y$ implies $(x,y)\not\in P(X)$. We note here that although $Q(T,X)$ need not be $T$-invariant for $T$ is only a semigroup, yet $Q^{-}(T,X)$ is a closed invariant subset of $(T,X\times X)$.
\end{sn}

Clearly, if $Q[x]=\{x\}$ then $x\in P_{\!aa}(T,X)$ by Definition~\ref{sn0.9}. By
$Q(X)={\bigcap}_{\alpha\in\mathscr{U}_X}\mathrm{cls}_{X\times X}{\bigcup}_{t\in T}t^{-1}\alpha$, $Q(X)$ is a closed reflexive symmetric relation on $X$, but $P(X)$ and $Q(X)$ both are neither transitive nor invariant in general semiflows. However, we can obtain the following.

\begin{thm}[{cf.~\cite[Lemma~1.10]{AD} for $T$ a right C-semigroup}]
If $(T,X)$ is an invertible semiflow with $T$ an almost right C-semigroup, then $Q(T,X)$ is invariant.
\end{thm}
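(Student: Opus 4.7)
The plan is to establish the invariance $tQ(T,X)\subseteq Q(T,X)$ for every $t\in T$ in two stages. First I would verify the inclusion when $t$ lies in the dense set
$$G=\{\tau\in T\mid \mathrm{cls}_T(T\setminus T\tau)\text{ is compact in }T\}$$
guaranteed by Definition~\ref{sn0.4}, and then pass to a general $t\in T$ by a closure argument, using that $Q(T,X)=\bigcap_{\alpha\in\mathscr{U}_X}\mathrm{cls}_{X\times X}\bigcup_{s\in T}s^{-1}\alpha$ is closed in $X\times X$ and that $(x,x')\mapsto(tx,tx')$ is continuous. Indeed, given $t\in T$ and $(x,x')\in Q(T,X)$, pick a net $\tau_k\to t$ with $\tau_k\in G$; once the first stage yields $(\tau_k x,\tau_k x')\in Q(T,X)$, joint continuity sends this pair to $(tx,tx')$, which must then lie in the closed set $Q(T,X)$.

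So fix $t\in G$ and $(x,x')\in Q(T,X)$. By Definition~\ref{0.11}(2), choose nets $x_n\to x$, $x_n'\to x'$ in $X$ and $\{t_n\}\subset T$ with $\lim_n t_n x_n=\lim_n t_n x_n'=z$ for some $z\in X$. Passing to a subnet, I may assume that either $t_n\in Tt$ for every $n$, or else $t_n\in T\setminus Tt$ for every $n$.

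In the first case, write $t_n=s_n t$ with $s_n\in T$. Continuity of $t$ gives $tx_n\to tx$ and $tx_n'\to tx'$, and by associativity
$$s_n(tx_n)=t_n x_n\to z,\qquad s_n(tx_n')=t_n x_n'\to z.$$
Therefore the nets $(tx_n),(tx_n'),(s_n)$ witness $(tx,tx')\in Q(T,X)$ directly from the definition.

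In the second case, $\{t_n\}\subset\mathrm{cls}_T(T\setminus Tt)$, which is compact because $t\in G$. Extracting a further subnet, $t_n\to s$ for some $s\in T$. Joint continuity then yields $t_n x_n\to sx$ and $t_n x_n'\to sx'$, so $sx=z=sx'$. Invertibility of $(T,X)$ makes $s\colon X\to X$ a bijection, forcing $x=x'$; hence $tx=tx'$ and $(tx,tx')\in\varDelta_X\subseteq Q(T,X)$. The main obstacle is precisely ensuring that the ``escaping'' net of this second case actually clusters inside $T$: this is exactly what the almost right C-semigroup hypothesis delivers through compactness of $\mathrm{cls}_T(T\setminus Tt)$ for $t\in G$, and without some such compactness one cannot rule out pathological behaviour of $\{t_n\}$ that would obstruct the argument.
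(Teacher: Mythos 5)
Your proof is correct and follows essentially the same route as the paper: establish $tQ(T,X)\subseteq Q(T,X)$ for $t$ in the dense set $G$ of elements with $\mathrm{cls}_T(T\setminus Tt)$ compact, then extend to all of $T$ using density of $G$, closedness of $Q(T,X)$, and joint continuity. The paper merely asserts the first step (deferring to the C-semigroup case in \cite{AD}); your dichotomy $t_n\in Tt$ versus $t_n\in T\setminus Tt$, with compactness and invertibility handling the second case, is the correct way to fill in that assertion.
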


\begin{proof}
At first, if $t\in T$ is such that $T\setminus Tt$ is relatively compact in $T$, then $tQ(T,X)\subseteq Q(T,X)$. Since $Q(T,X)$ is closed and $(t,x)\mapsto tx$ is continuous, hence $TQ(T,X)\subseteq Q(T,X)$. The proof is complete.
\end{proof}

\subsection{Main statements}\label{sec0.3}
\subsubsection{Veech's structure theorem}
The first main purpose of this paper is to prove the following Theorem~\ref{0.13}, which is Veech \cite[Theorem~2.1.2]{V77} (also cf.~\cite[p.~741]{V65}). Although this theorem is of fundamental importance in the theory of a.a. dynamics, yet no complete proof has been available in the literature since Veech first claimed it in 1965~\cite{V65}.

\begin{thm}[Veech's structure theorem for a.a. flows]\label{0.13}
If $(T,X)$ is an a.a. flow, then $(T,X)$ is an almost 1-1 extension of an equicontinuous flow $(T,Y)$ via $\pi\colon(T,X)\rightarrow(T,Y)$ such that $P_{\!aa}(T,X)=\{x\in X\,|\,\pi^{-1}[\pi(x)]=\{x\}\}$.
\end{thm}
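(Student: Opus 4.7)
The plan is to construct $(T,Y)$ as the maximal equicontinuous factor of $(T,X)$ and then to establish the fibre-theoretic identity $P_{\!aa}(T,X)=\{x\in X\,|\,\pi^{-1}[\pi(x)]=\{x\}\}$, which yields both conclusions of the theorem simultaneously. Fix $x_0\in P_{\!aa}(T,X)$ with $\mathrm{cls}_XTx_0=X$, so $(T,X)$ is a minimal flow. Since $T$ is a group, the regionally proximal relation $Q=Q(T,X)$ of Definition~\ref{0.11} is closed, reflexive, symmetric, and $T$-invariant. Invoking the Veech equicontinuous structure theorem for minimal flows, the smallest closed $T$-invariant equivalence relation $S$ containing $Q$ yields an equicontinuous factor $\pi\colon(T,X)\to(T,Y):=(T,X/S)$, the maximal equicontinuous factor of $(T,X)$.

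For the ``$\supseteq$'' direction I would use equicontinuity of $(T,Y)$. Assume $\pi^{-1}[\pi(x)]=\{x\}$ and let $t_nx\to y$, $x_n^\prime\to x^\prime$, $t_nx_n^\prime=y$. Projecting via $\pi$ gives $t_n\pi(x_n^\prime)=\pi(y)$ (constant in $n$), $\pi(x_n^\prime)\to\pi(x^\prime)$, and $t_n\pi(x)\to\pi(y)$. Since $(T,Y)$ is minimal equicontinuous its transitions extend to a group of uniformly equicontinuous homeomorphisms of $Y$, so applying $t_n^{-1}$ inside $Y$ yields $\pi(x)=\lim t_n^{-1}\pi(y)=\lim\pi(x_n^\prime)=\pi(x^\prime)$, whence $x=x^\prime$ by the singleton-fibre hypothesis. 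The opposite ``$\subseteq$'' direction is subtler. For $x\in P_{\!aa}(T,X)$ and $(x,y)\in Q$ with witnessing nets $x_n\to x$, $y_n\to y$, $t_n\in T$ and $t_nx_n,\,t_ny_n\to z$, I would set $x_n^\prime:=t_n^{-1}z$, pass to a subnet to arrange $x_n^\prime\to x^\prime$, and observe that $t_nx_n^\prime=z$ holds identically; after aligning through a further subnet so that $t_nx\to z$ (replacing $x_n$ by $x$ via minimality and joint continuity), the almost automorphy of $x$ forces $x^\prime=x$, and the symmetric computation on the $y$-side forces $x^\prime=y$, whence $y=x$ and $Q[x]=\{x\}$. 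The upgrade from $Q[x]=\{x\}$ to $S[x]=\{x\}$ on $P_{\!aa}(T,X)$ will then follow from the $T$-invariance of $P_{\!aa}(T,X)$ noted immediately after Definition~\ref{sn0.9} together with the closed-equivalence-relation structure of $S$ generated by $Q$.

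The principal obstacle is the ``$\subseteq$'' step above. Definition~\ref{sn0.9} demands the exact equality $t_nx_n^\prime=y$ at every index, far stronger than mere convergence, so the regional-proximality witness nets cannot be fed into the a.a.\ condition verbatim; the substitution $x_n^\prime:=t_n^{-1}z$ repairs this, but at the cost of forcing one to align the limit of $t_n^{-1}z$ with \emph{both} $x$ and $y$. Reconciling these two parallel applications of the a.a.\ property---one on the $x$-side and one on the $y$-side of the $Q$-pair---is the essential technical difficulty, where minimality, the group structure of $T$, joint continuity of the action, and the symmetry of $Q$ all must be brought to bear.
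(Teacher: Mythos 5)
Your ``$\supseteq$'' direction (singleton fibre over an equicontinuous factor implies a.a.) is fine and matches the paper's. The problem is the ``$\subseteq$'' direction, and the gap is exactly where you flag the ``essential technical difficulty'': the step ``after aligning through a further subnet so that $t_nx\to z$ (replacing $x_n$ by $x$ via minimality and joint continuity)'' does not work. From $x_n\to x$ and $t_nx_n\to z$ you cannot conclude $t_nx\to z$, even along a subnet: each transition $t_n$ is continuous, but the family $\{t_n\}$ is not equicontinuous (if it were, there would be nothing to prove), so a subnet of $\{t_nx\}$ converges to \emph{some} $w$, with no control forcing $w=z$. If $w\neq z$, Definition~\ref{sn0.9} simply does not apply to the data $t_nx\to w$, $x_n'\to x'$, $t_nx_n'=z$. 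In other words, you are trying to prove directly that $x\in P_{\!aa}(T,X)$ implies $Q[x]=\{x\}$ by net manipulation; that implication is the hard half of Theorem~\ref{0.14}, and the paper obtains it only \emph{after} establishing the genuinely nontrivial fact that an a.a.\ flow is l.a.p.\ (Theorem~\ref{thm2.5}, which rests on the ``two syndetic sets'' condition of Theorem~\ref{1.10} and the Bohr characterization of Theorem~\ref{thm2.2}). With local almost periodicity in hand, Lemma~\ref{lem3.1} gives $P(X)=Q(X)$ with $X/P(X)$ equicontinuous, and Lemma~\ref{1.8} gives $P[x]=\{x\}$ for a.a.\ points; that is the route that actually closes the argument.

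A second, related gap: even granting $Q[x]=\{x\}$, your ``upgrade'' to $S[x]=\{x\}$, where $S$ is the smallest closed invariant equivalence relation containing $Q$, is asserted rather than proved. In general $Q$ is not an equivalence relation, $S$ is built from $Q$ by transitive chains, closures and saturation, and a point with trivial $Q$-cell can a priori acquire a nontrivial $S$-cell through chains passing through other points; the invariance of $P_{\!aa}(T,X)$ does not by itself rule this out. The paper sidesteps the issue entirely by quotienting by $P(X)$, which for an l.a.p.\ flow \emph{already is} a closed invariant equivalence relation equal to $Q(X)$, so no generated relation $S$ ever enters. To repair your proof you would need to import the l.a.p.\ step (or an equivalent substitute), at which point you would essentially be reproducing the paper's argument.
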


\begin{note}
It turns out that Veech's structure theorem implies that an a.a. flow is l.a.p.~\cite{V65, MW}; and \textit{if a minimal flow is pointwise a.a. then it is equicontinuous}. The latter and Theorem~\ref{0.13} together is called the ``Principal Structure Theorem'' of a.a. flows (cf.~\cite[Theorem~2.1.2]{V77}).
\end{note}

In Veech~1965~\cite{V65}, he developed a series of methods for studying a.a. functions on a discrete group $G$ and he proved Theorem~\ref{0.13} in the special case that $T=G$ and $X=H_G(\xi)$ for an a.a. function $\xi$ on $G$ as in $\S\ref{sec0.1}$. However, Veech's argument needs the basic property that a function on $G$ is right a.a. iff it is left a.a. (cf.~$\S$\ref{sec0.1}(i)). Since $(G,L^\infty(G),G)$ as in $\S\ref{sec0.1}$ with $T=G$ is a bitransformation group, then if $f\in H_G(\xi)$ is a.a. $N_G(f,U)$ is ``bi-syndetic'' in the much more stronger sense that $G=KN_G(f,U)=N_G(f,U)K$ for some finite subset $K$ of $G$ (cf.~\cite[Theorem~2.2.1]{V65}).

However, in our general flow setting, although $(T,X)$ gives rise to a natural right action flow $(X,T)$ by $(x,t)\mapsto xt$ as in Definition~\ref{sn0.1}, yet $(T,X,T)$ is not a bitransformation group when $T$ is not abelian because $t^{-1}(sx)\not=s(t^{-1}x)$ in general for $s\not=t$ in $T$. Moreover, a right-syndetic set in the sense of Definition~\ref{sn0.2} is in general not left-syndetic. Thus, from a.a. function on a group to a.a. flow on a compact $\textrm{T}_2$ space, there are some essential differences.

In view of these reasons, our proof of Theorem~\ref{0.13} is actually non-trivial. In \cite[Theorem~19]{AM}, Auslander and Markley proved Theorem~\ref{0.13} in the special case that $T$ is an abelian group and $X$ a compact metric space. They pointed out that ``it is not easy to extend \cite[Theorem~19]{AM} to flows on compact Hausdorff spaces'' even for $T$ is an abelian group (cf.~\cite[p.~107]{AM}).

We will prove Theorem~\ref{0.13} in $\S\ref{sec3}$ by firstly proving (without using Theorem~\ref{0.13}) the important fact that:
``\textit{An a.a. flow is an l.a.p. flow}'' (cf.~Theorem~\ref{thm2.5} in $\S\ref{sec2}$).
To prove this, as in \cite{V65, AM} we will show the ``two syndetic sets'' condition $ABx\subset U_x$ where $A,B$ right-syndetic sets in $T$ and $U_x$ a neighborhood of $x$ (cf.~Corollary~\ref{1.11}). In fact, we can first show the ``weak'' two syndetic sets condition $A^{-1}Bx\subset U_x$ for a kind of semiflows so that $A^{-1}$ is left-syndetic in $T$; however, if $T$ is abelian, then $A^{-1}$ is also right-syndetic and thus $x$ is an l.a.p. point. In addition, comparing with \cite{V65, AM}, no metric on $X$ here will cause some trouble. To overcome this, we will exploit the continuous pseudo-metrics $d$ on $X$, although $(X,d)$ is not a $\textrm{T}_2$-space (cf.~Proof of Theorem~\ref{1.10}).

Moreover, the converse of Theorem~\ref{0.13} can be generalized to minimal semiflows with any phase semigroups; see Theorem~\ref{thm3.6} in $\S\ref{sec3}$.
\subsubsection{Almost periodic and almost automorphic functions}
In $\S\ref{sec4}$ we will simply prove a classical theorem of Bochner which equivalently describe the almost periodic functions by the almost automorphy on a discrete group. See Theorem~\ref{thm4.10} below.

\subsubsection{Veech's relations}
The another main purpose of this paper is to study Veech's relations $V$ and $D$ on $X$ (cf.~Definitions~\ref{def5.1} and \ref{def5.4} in $\S\ref{sec5}$).
Veech in \cite{V68} proved that when $T$ is an ``abelian'' group, $\textrm{cls}_X{V[x]}=D[x]$ and moreover if $X$ is a compact metric space $V[x]=D[x]$. We will
show  in $\S\ref{sec5}$:
\begin{quote}
\textbf{Theorem}. {\it Let $(T,X)$ be an invertible minimal semiflow with any phase semigroup $T$ not necessarily abelian and let $x\in X$. Then $V[x]$ is dense in $D[x]$; and moreover, $V[x]=D[x]$ if $X$ is a compact metric space or if $T$ is countable.}
\end{quote}

Moreover, by using Veech's relations, we can further characterize almost automorphy by the regional proximity on $X$ as follows:

\begin{thm}[{cf.~\cite{V68} for $T$ an abelian group}]\label{0.14}
Let $(T,X)$ be a minimal flow. Then $(T,X)$ is an a.a. flow if and only if there exists a point $x\in X$ such that $Q[x]=\{x\}$.
\end{thm}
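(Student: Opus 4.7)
The plan is to reduce both directions to Veech's structure theorem (Theorem~\ref{0.13}) together with the remark following Definition~\ref{0.11} that $Q[x]=\{x\}$ forces $x\in P_{\!aa}(T,X)$.

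The ``if'' direction is essentially immediate. If some $x\in X$ satisfies $Q[x]=\{x\}$, then $x\in P_{\!aa}(T,X)$ by the quoted remark, while minimality supplies $\mathrm{cls}_XTx=X$; hence $(T,X)$ is an a.a.\ flow by Definition~\ref{sn0.9}(ii).

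For the converse, the plan is to apply Theorem~\ref{0.13} to produce an almost 1-1 epimorphism $\pi\colon(T,X)\to(T,Y)$ onto an equicontinuous flow $(T,Y)$, together with the identification $P_{\!aa}(T,X)=\{x\in X:\pi^{-1}[\pi(x)]=\{x\}\}$. Since $(T,X)$ is a.a., this set is nonempty; pick any $x$ in it. The claim is that $Q[x]=\{x\}$. It rests on two ingredients: \textbf{(a)} $\pi$ pushes $Q(T,X)$ into $Q(T,Y)$, a one-line diagram chase from the continuity and equivariance of $\pi$ combined with the definition of $Q$ in Definition~\ref{0.11}; and \textbf{(b)} $Q(T,Y)=\varDelta_Y$ because $(T,Y)$ is equicontinuous. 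With \textbf{(a)} and \textbf{(b)} in hand, every $x'\in Q[x]$ yields $\pi(x')\in Q[\pi(x)]=\{\pi(x)\}$, so $x'\in\pi^{-1}[\pi(x)]=\{x\}$.

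The only nontrivial piece is therefore \textbf{(b)}. Given $(y,y')\in Q(T,Y)$ with nets $y_n\to y$, $y_n'\to y'$ and $t_n\in T$ satisfying $\lim_n t_ny_n=\lim_n t_ny_n'$, exploit the fact that the flow $(T,Y)$, being equicontinuous on a compact Hausdorff space with $T$ a group, admits an equicontinuous inverse family $\{t^{-1}:t\in T\}$. Then, for any $\varepsilon\in\mathscr{U}_Y$, pick $\delta\in\mathscr{U}_Y$ with $(u,v)\in\delta\Rightarrow(t^{-1}u,t^{-1}v)\in\varepsilon$ for every $t\in T$. Since $(t_ny_n,t_ny_n')$ converges to the diagonal, it lies in $\delta$ eventually; applying $t_n^{-1}$ gives $(y_n,y_n')\in\varepsilon$ eventually, so $(y,y')\in\overline{\varepsilon}$. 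As $\varepsilon$ was arbitrary and $Y$ is Hausdorff, $y=y'$. The main (minor) obstacle is not conceptual but the uniform-space bookkeeping in \textbf{(b)}---in particular justifying the inverse-equicontinuity of a flow on a compact Hausdorff space---handled exactly as in the parallel uniform arguments used earlier in the paper.
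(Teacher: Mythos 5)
Your proposal is correct, and the ``if'' direction coincides with the paper's (via the remark after Definition~\ref{0.11}, or equivalently via $V[x]\subseteq Q[x]$ from Theorem~\ref{thm5.6}). For the converse, however, you take a genuinely different route. The paper argues entirely upstairs on $X$: an a.a.\ flow is l.a.p.\ (Theorem~\ref{thm2.5}), hence $P(X)=Q(X)$ by the Ellis--Gottschalk lemma (Lemma~\ref{lem3.1}), and an a.a.\ point is distal (Lemma~\ref{1.8}), so $Q[x]=P[x]=\{x\}$ directly. You instead invoke the full statement of Theorem~\ref{0.13} as a black box and push $Q$ down to the equicontinuous factor: $\pi\bigl(Q(T,X)\bigr)\subseteq Q(T,Y)=\varDelta_Y$ (your step (b) is exactly Lemma~\ref{lem3.3} of the paper, and your hands-on derivation of it is sound, since for a group $T$ the inverse family $\{t^{-1}\}$ is just $T$ again and equicontinuity on a compact space is uniform), whence $Q[x]\subseteq\pi^{-1}[\pi(x)]=\{x\}$ for any $x$ in the distinguished fibre. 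What your version buys is modularity: it needs only the existence of an almost 1-1 equicontinuous factor with the fibre characterization of $P_{\!aa}(T,X)$, plus two generic facts (functoriality of $Q$ under epimorphisms and triviality of $Q$ for equicontinuous flows), and in particular it never needs to know that $P=Q$ on $X$ itself. What it costs is that it leans on the strongest conclusion of Theorem~\ref{0.13}, whereas the paper's argument uses only the intermediate ingredients (local almost periodicity and distality of a.a.\ points) and is correspondingly shorter given that Lemmas~\ref{lem3.1} and \ref{1.8} are already in hand.
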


It should be noted that in view of Ellis' ``two-circle'' minimal set the condition that $Q[x]$ is finite non-single for all $x\in X$ does not need to imply that $(T,X)$ is an a.a. flow.

When $T$ is an ``abelian'' group, Theorem~\ref{0.14} is actually \cite[$(\textrm{i})\Leftrightarrow(\textrm{ii})$ of Proposition~1.5]{Mil} and also \cite[Corollary~20]{AM}. We will prove in $\S\ref{sec5}$ this theorem after Theorem~\ref{thm5.6} that is for invertible minimal semiflows.
In fact, Theorem~\ref{0.14} still holds if ``$(T,X)$ is a minimal invertible semiflow with $T$ an abelian semigroup'' instead of ``$(T,X)$ be a minimal flow''; see Theorem~\ref{thm5.8} in $\S\ref{sec5}$.

\begin{cor}[Inheritance~I]\label{0.15}
Let $(T,X)$ be a minimal flow, $S$ a syndetic subgroup of $T$, and $x\in X$. Then $x\in P_{\!aa}(T,X)$ if and only if $x\in P_{\!aa}(S,X)$.
\end{cor}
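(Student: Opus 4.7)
The direction $x \in P_{\!aa}(T,X) \Rightarrow x \in P_{\!aa}(S,X)$ is immediate from Definition~\ref{sn0.9}, since every net $\{s_n\}$ in $S$ is a net in $T$, and the defining condition of $P_{\!aa}$ for $T$-nets at $x$ specialises to the condition for $S$-nets at $x$.

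For the converse, assume $x \in P_{\!aa}(S,X)$. The plan is to pass through the regional proximal characterisation of Theorem~\ref{0.14} and to transfer information between the $S$- and $T$-actions via the syndetic decomposition of $T$. Set $M = \mathrm{cls}_X Sx$; since $S$ is a subgroup of $T$, $(S,M)$ is a minimal flow, and I claim $x \in P_{\!aa}(S,X) \Leftrightarrow x \in P_{\!aa}(S,M)$: any witnessing data $s_n x \to y$, $x_n' \to x'$, $s_n x_n' = y$ automatically has $y = \lim s_n x \in M$ (as $M$ is closed) and $x_n' = s_n^{-1}y \in M$ by the $S$-invariance of $M$. Thus Theorem~\ref{0.14} applied to $(S,M)$ converts the hypothesis into $Q_S^M[x] = \{x\}$, the regional proximal fibre of $x$ in $(S,M)$. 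Applied instead to the minimal flow $(T,X)$, the same theorem reduces the goal $x \in P_{\!aa}(T,X)$ to $Q_T^X[x] = \{x\}$.

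To link the two relations I would exploit the syndeticity of $S$ in the topological group $T$: it supplies a compact $K \subseteq T$ with $T = K^{-1}S$. With this one proves $Q_T^X \subseteq Q_S^X$ by direct transfer of witnesses. Given $(u,v) \in Q_T^X$ with $x_n \to u$, $x_n' \to v$ and $t_n \in T$ satisfying $t_n x_n, t_n x_n' \to z$, decompose $t_n = k_n^{-1}s_n$ with $k_n \in K$, $s_n \in S$, and pass to a subnet so that $k_n \to k \in K$; joint continuity of the action then yields $s_n x_n = k_n(t_n x_n) \to kz$ and $s_n x_n' = k_n(t_n x_n') \to kz$, exhibiting $(u,v) \in Q_S^X$.

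What remains, and is the main obstacle, is to close the loop with $Q_S^X[x] \subseteq Q_S^M[x]$; together with the above and $Q_S^M[x] = \{x\}$ this chain forces $Q_T^X[x] = \{x\}$ and the proof is complete. I would approach the last inclusion by refining witnesses via the standard reduction in minimal flows which allows the first witnessing net to be taken as the constant $x_n \equiv x$; this forces $z = \lim s_n x \in M$, and then the covering $X = K^{-1}M$ combined with $T$-minimality should pin the partner $v$ inside $M$ and allow the witnessing nets to be recast entirely within $M$. The delicacy is that $S$ need not be normal in $T$, so the translates $k^{-1}M$ for $k \in K$ are not $S$-invariant in general and there is no naive clopen partition of $X$ into finitely many $S$-minimal pieces; one must therefore use the $T$-minimality of $(T,X)$ and the hypothesis $Q_S^M[x] = \{x\}$ in coordination rather than via a piecewise decomposition.
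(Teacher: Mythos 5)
Your easy direction is fine, and your proof of $Q(T,X)\subseteq Q(S,X)$ (decompose $t_n=k_n^{-1}s_n$ with $k_n$ in a compact set and pass to a convergent subnet) is correct; together with the trivial inclusion $Q(S,X)\subseteq Q(T,X)$ it reproves exactly the identity $Q(T,X)=Q(S,X)$ that the paper's own two-line proof imports from \cite[Lemma~4.16]{E69} and \cite[Proposition~6.1]{AD} and then combines with Theorem~\ref{0.14}; so your overall strategy is the paper's. (A small repair along the way: $(S,M)$ is minimal not because ``$S$ is a subgroup'', but because $x\in P_{\!aa}(S,X)$ makes $x$ a distal, hence almost periodic, point of the flow $(S,X)$ by Lemma~\ref{1.8}.)

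The step you label ``the main obstacle'' is, however, a genuine gap, and your sketch does not close it. Theorem~\ref{0.14} applies only to \emph{minimal} flows, so from $x\in P_{\!aa}(S,X)$ it yields $Q_{(S,M)}[x]=\{x\}$ and says nothing about $Q_{(S,X)}[x]$: a witness for $(x,v)\in Q_{(S,X)}$ consists of nets $x_n\to x$, $x_n'\to v$ ranging over all of $X$, and neither $v$ nor the approximants need lie in $M$. The ``constant first net'' characterisation of $Q$ that you propose to use is itself a theorem about minimal systems and is unavailable for the possibly non-minimal $(S,X)$, and, as you concede, $X$ is not an $S$-invariant union of translates of $M$ when $S$ is not normal; so the chain $Q_{(T,X)}[x]\subseteq Q_{(S,X)}[x]\subseteq Q_{(S,M)}[x]=\{x\}$ is not established and the proof does not terminate. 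To finish, either quote the identity $Q(T,X)=Q(S,X)$ in the cited form valid at almost periodic points (the paper's route), or sidestep $Q$ entirely via Theorem~\ref{thm2.2}: $x\in P_{\!aa}(S,X)$ yields, for each $\varepsilon\in\mathscr{U}_X$, a set $B\subseteq S$ syndetic in $S$ with $B^{-1}Bx\subseteq\varepsilon[x]$; since $T=K^{-1}S$ and $S=F^{-1}B$ give $T=(FK)^{-1}B$, the same $B$ is syndetic in $T$, so $x$ is a Bohr a.a.\ point of $(T,X)$ and hence lies in $P_{\!aa}(T,X)$. (This second route is immediate when $S$ is discretely syndetic; for merely compactly syndetic $S$ one must rerun the first half of the proof of Theorem~\ref{thm2.2} with a convergent subnet from $K$ in place of the finitely many $s_j$.)
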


\begin{proof}
This result follows at once from Theorem~\ref{0.14} and $Q(T,X)=Q(S,X)$ (cf., e.g.,~\cite[Lemma~4.16]{E69} and \cite[Proposition~6.1]{AD}).
\end{proof}

\begin{cor}[Inheritance~II]\label{0.16}
Let $(T,X)$ be a minimal invertible semiflow with $T$ abelian, $S$ a syndetic subsemigroup of $T$, and $x\in X$. Then $x\in P_{\!aa}(T,X)$ if and only if $x\in P_{\!aa}(S,X)$.
\end{cor}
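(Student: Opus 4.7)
The plan is to handle the two directions separately, since they behave quite differently. The implication $x \in P_{\!aa}(T,X) \Rightarrow x \in P_{\!aa}(S,X)$ is immediate: every net in $S$ is a net in $T$, so the defining condition of almost automorphy for $(T,X)$ restricts at once to the defining condition for $(S,X)$.

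For the reverse $x \in P_{\!aa}(S,X) \Rightarrow x \in P_{\!aa}(T,X)$, I would give a direct net argument rather than mimic the regional-proximity route of Corollary~\ref{0.15}, because $(S,X)$ need not be minimal under the present hypotheses and so Theorem~\ref{thm5.8} is not available for $(S,X)$. Starting from a $T$-test $t_n x\to y$, $x_n'\to x'$, $t_n x_n'=y$, I would use the syndeticity of $S$ to pick $k_n\in K$ (with $K\subset T$ compact) such that $s_n:=k_n t_n\in S$, and pass to a subnet on which $k_n\to k\in K$. Joint continuity yields $s_n x\to ky$; setting $z_n:=s_n^{-1}(ky)\in X$ (well-defined by invertibility) and extracting a further subnet with $z_n\to z$, the almost-automorphy hypothesis for $(S,X)$, applied to $\{s_n\}$, the constant target $ky$, and $z_n\to z$ with $s_n z_n=ky$, forces $x=z$.

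It remains to identify $z$ with $x'$, and this is where the abelianness of $T$ enters essentially: because $t_n$ and $k$ commute in $T$, the self-maps $t_n^{-1}$ and $k$ of $X$ commute (using invertibility of $t_n$), and similarly $k_n^{-1}$ commutes with $k$, so
\[
z_n=(k_n t_n)^{-1}(ky)=k_n^{-1}\bigl(k\cdot t_n^{-1}y\bigr)=k_n^{-1}(k x_n').
\]
A standard continuity-of-inverse subnet argument (any cluster point $w$ of $k_n^{-1}(k x_n')$ satisfies $k w=\lim k_n(k_n^{-1}(k x_n'))=k x'$, hence $w=x'$ by invertibility of $k$) then gives $z_n\to x'$, whence $x=z=x'$. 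The main obstacle is precisely this last identification: the self-maps $t_n^{-1}\colon X\to X$ are not equicontinuous on $X$, so without abelianness there is no direct way to compare $t_n^{-1}(ky)$ with the given $x_n'=t_n^{-1}y$; pulling $k$ outside $t_n^{-1}$ is exactly what reduces the problem to the compact factor $k_n^{-1}$, which is controlled by joint continuity of the action.
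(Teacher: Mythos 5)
Your proof is correct, but it proceeds along a genuinely different route from the paper's. The paper disposes of Corollary~\ref{0.16} in one line, exactly as it does Corollary~\ref{0.15}: it combines Theorem~\ref{thm5.8} (the equivalence of $x\in P_{\!aa}$ with $Q[x]=\{x\}$ for minimal invertible abelian semiflows) with the inheritance of the regionally proximal relation, $Q(T,X)=Q(S,X)$ for syndetic $S$, quoted from \cite{AD}. Your argument replaces all of this machinery by a direct manipulation of nets whose only inputs are the definition of $P_{\!aa}$, the syndeticity of $S$ (to trade $t_n$ for $s_n=k_nt_n\in S$ with $k_n$ ranging in a fixed compact set $K$), invertibility, and the commutation $kt_n^{-1}=t_n^{-1}k$; you correctly isolate this last point as the only place where abelianness is indispensable, since without it $z_n=t_n^{-1}\bigl(k_n^{-1}(ky)\bigr)$ cannot be compared with $x_n'=t_n^{-1}y$ in the absence of any equicontinuity of the family $\{t_n^{-1}\}$. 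Your reservation about the $Q$-relation route is also well founded: in the hard direction one would need to apply Theorem~\ref{thm5.8} to $(S,X)$, and $(S,X)$ need not be minimal (take $T=\mathbb{Z}$ acting by the swap on a two-point space and $S=2\mathbb{Z}$), so the paper's one-line proof silently requires an additional reduction, for instance to the $S$-orbit closure of $x$. What your approach buys is a self-contained, elementary proof that, as a bonus, never uses the minimality of $(T,X)$; what it gives up is the structural information about $Q$-cells and the equicontinuous structure relation that the paper's route keeps in view.
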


\begin{proof}
Using Theorem~\ref{thm5.8} and \cite[Proposition~6.1]{AD} instead of Theorem~\ref{0.14} and \cite[Lemma~4.16]{E69} respectively, we can easily obtain the conclusion of Corollary~\ref{0.16}.
\end{proof}

Let $(T,X)$ be a flow. Recall that $p\in X$ is called a \textit{periodic point} of $(T,X)$ if $\{t\,|\,tp=p\}$ is a syndetic subgroup of $T$. Then by Corollary~\ref{0.15}, any periodic point of a flow is an a.a. point.

We can define periodic points for semiflows in an obvious manner. Corollary~\ref{0.16} follows that any periodic point is an a.a. point for an abelian semiflow. In fact, we can easily show any discretely periodic point is an a.a. point for any semiflow by Definition~\ref{sn0.9}.

Recall that $(X,T)$ is the reflection of an invertible $(T,X)$. Finally we will present in $\S\ref{sec5}$ a sufficient condition for almost automorphy of a point by using distality and local almost periodicity as follows.

\begin{thm}\label{0.17}
Let $(T,X)$ be an invertible minimal semiflow with $T$ an almost right C-semigroup and $x_0\in X$. Then the following two statements hold:
\begin{enumerate}
\item[$(a)$] $(X,T)$ is minimal.
\item[$(b)$] If $x_0$ is a distal l.a.p. point of $(T,X)$, then $x_0$ is an a.a. point of $(X,T)$.
\end{enumerate}
\end{thm}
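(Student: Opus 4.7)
For part (a), the plan is to argue by contradiction. Suppose $M\subsetneq X$ is a nonempty closed $T^{-1}$-invariant subset; its complement $U=X\setminus M$ is then open and $T$-forward-invariant ($TU\subseteq U$). Since $(T,X)$ is minimal and each $t\in T$ is a homeomorphism of $X$, the orbit of every point is dense, so $U$ is dense and $M$ has empty interior. I then exploit the almost right C-semigroup hypothesis: for a ``C-element'' $t_0\in T$ with $\mathrm{cls}_T(T\setminus Tt_0)$ compact (a dense set of which exist), any $m\in M$ with $t_0m\in U$ satisfies the decomposition
\begin{equation*}
Tm=(Tt_0)m\cup(T\setminus Tt_0)m\subseteq T(t_0m)\cup \mathrm{cls}_T(T\setminus Tt_0)\cdot m\subseteq U\cup\textrm{(compact)},
\end{equation*}
so $Tm\cap M$ is confined to a compact piece while $Tm$ itself is dense in $X$. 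Combining this estimate over the dense set of C-elements with the $T^{-1}$-invariance of $M$, joint continuity, and minimality of $(T,X)$, I would show that no such $m$ can exist; consequently $t_0M\subseteq M$ for every C-element $t_0$, and density plus continuity promote this to $TM\subseteq M$, contradicting the minimality of $(T,X)$.

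For part (b), I verify the a.a.\ condition for $x_0$ in the reflection semiflow $(X,T)$ directly. Unfolding the definition via $xt=t^{-1}x$, a net $\{t_n\}$ in $T$ with $x_0t_n\to y$, $x_n'\to x'$, and $x_n't_n=y$ translates to $t_n^{-1}x_0\to y$ and $x_n'=t_ny\to x'$; the goal is $x_0=x'$. Three ingredients. First, since $T$ is almost right C and $\mathrm{cls}_XTx_0=X$ by part (a) together with Lemma~\ref{lem0.6}, local almost periodicity propagates from $x_0$ to every point of $X$, in particular to $y$. Second, for any $\varepsilon\in\mathscr{U}_X$ I use joint continuity of the reflection $(t,x)\mapsto t^{-1}x$ on $K\times X$ (with $K\subset T$ compact, the syndeticity witness) to pick $\delta\in\mathscr{U}_X$ such that $(z_1,z_2)\in\delta$ and $k\in K$ force $(k^{-1}z_1,k^{-1}z_2)\in\varepsilon$; l.a.p.\ at $y$ then supplies a neighborhood $V\ni y$ and a right-syndetic $A\subseteq T$ (with witness $K$) whose image $AV$ is small enough that $(av,ay)\in\delta$ for all $v\in V$, $a\in A$. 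Writing any $t\in T$ as $t=k^{-1}a$ via the defining condition $Kt\cap A\neq\emptyset$ of right-syndeticity yields
\begin{equation*}
(tv,ty)\in\varepsilon\qquad \forall v\in V,\ \forall t\in T.
\end{equation*}
Third, since $v_n:=t_n^{-1}x_0\in V$ eventually, $(t_nv_n,t_ny)=(x_0,t_ny)\in\varepsilon$; letting $t_ny\to x'$ then gives $(x_0,x')\in\varepsilon$ for every $\varepsilon$, hence $x_0=x'$. The distality of $x_0$ enters as the assurance that this limit argument collapses unambiguously, ruling out the competing possibility that a distinct $x'$ is proximal to $x_0$.

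The main obstacle is part (a): extracting two-sided invariance of $M$ from one-sided $T^{-1}$-invariance in a semigroup setting without a group structure is delicate, and requires careful interplay between the density of C-elements, joint continuity of the action, and minimality of $(T,X)$---the compactness of $\mathrm{cls}_T(T\setminus Tt_0)$ is the only structural ``wiggle room'' available. Part (b), by contrast, reduces transparently to the uniform-continuity estimate enabled by the compact syndeticity witness combined with the jointly continuous reflection action, with Lemma~\ref{lem0.6} doing the essential propagation of local almost periodicity from $x_0$ to the limit point $y$ appearing in the a.a.\ condition.
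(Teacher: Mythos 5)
Both parts of your proposal have genuine gaps. For part (a), the key deduction is asserted rather than proved. From $m\in M$, $t_0m\in U$ and the decomposition $Tm\subseteq T(t_0m)\cup Cm$ with $C=\mathrm{cls}_T(T\setminus Tt_0)$ you obtain $Tm\cap M\subseteq Cm$, but this is not a contradiction: density of $Tm$ in $X$ does not give density of $Tm\cap M$ in $M$, so confining $Tm\cap M$ to a compact set tells you nothing (note $m=em\in Cm$ already). One-sided invariance $T^{-1}M\subseteq M$ only yields $M\subseteq tM$ for all $t$, which points the wrong way, and I do not see how your ``combining\ldots I would show no such $m$ exists'' closes. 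The paper's Lemma~\ref{lem5.7} gets around exactly this difficulty by a different device: it introduces the $\alpha$-limit set $\alpha_T(x)=\bigcap_{F}\mathrm{cls}_X\,xF^c$ (intersection over compact $F\subseteq T$), which is nonempty, closed, and contained in $\mathrm{cls}_X xT$, and proves it is \emph{forward} invariant under $(T,X)$ using the compactness of $\mathrm{cls}_T(T\setminus Ts)$ via the inclusion $K^c\subseteq F^cs$ for $K=Fs\cup\mathrm{cls}_T(T\setminus Ts)$; minimality of $(T,X)$ then forces $\alpha_T(x)=X$, hence $\mathrm{cls}_X xT=X$. Your decomposition is the right raw material, but it must be applied to a limit-set object along the filter of complements of compacta, not to the bare orbit of a single $m$.

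For part (b), the translation of the a.a.\ condition is correct and the configuration $(v_n,y)\to(y,y)$, $(t_nv_n,t_ny)\to(x_0,x')$ with $v_n=t_n^{-1}x_0$ exhibits $(x_0,x')\in Q^-(T,X)$, which is the right object. But the middle step is false as stated: the uniform estimate $(tv,ty)\in\varepsilon$ for all $t\in T$ and $v\in V$ says precisely that $y\in\mathrm{Equi}(T,X)$, and an l.a.p.\ point of a minimal non-equicontinuous system need not be an equicontinuity point (if it were, the whole minimal system would be equicontinuous, contradicting the non-trivial cases the theorem is meant to cover, e.g.\ Sturmian-type a.a.\ flows). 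The derivation is also circular: your $\delta$ must be chosen from joint continuity on $K\times X$, but $K$ is the syndeticity witness of $A$, which is chosen only after $\delta$. The correct move is the one in Lemma~\ref{lem3.2}: write $k_nt_n=a_n\in A$ with $k_n\in K$, pass to a subnet with $k_n\to k\in T$, and conclude only that $k(x_0,x')\in\varepsilon$, i.e.\ $(x_0,x')\in P(T,X)$. This is where distality of $x_0$ genuinely enters — not as an ``assurance'' at the end, but as the step $P_{(T,X)}[x_0]=\{x_0\}$ that collapses the proximal pair to $x'=x_0$. With that repair your part (b) becomes essentially the paper's argument ($Q(X,T)=Q^-(T,X)\subseteq P(T,X)$ by Lemmas~\ref{lem0.6} and \ref{lem3.2}, then $Q_{(X,T)}[x_0]=\{x_0\}$ and Theorem~\ref{thm5.6}).
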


Here ``distal'' at $x_0$ means $P[x_0]=\{x_0\}$ (cf.~Definition~\ref{1.7}). If $T$ is a right C-semigroup instead of an almost right C-semigroup, then $(X,T)$ is minimal by \cite[Theorem~6.31]{AD}. Here we will need a lemma (cf.~Lemma~\ref{lem5.7} in $\S\ref{sec5}$) to generalize the right C-semigroup case.

Theorem~\ref{0.17}$(a)$ is in fact a generalization of \cite[Reflection principle III]{AD}. We now conclude this introductory section with two open questions:

\begin{que}[Reflection principle]\label{0.18}
Let $(T,X)$ be an invertible minimal semiflow with $T$ an almost C-semigroup and $x_0\in X$. If $x_0$ is a distal l.a.p. point of $(T,X)$, then:
\begin{enumerate}
\item[(1)] is $x_0$ an l.a.p. point of $(X,T)$? (Note that $x_0$ is an a.a. point of $(X,T)$ by Theorem~\ref{0.17}.)
\item[(2)] is $x_0$ an.a.a. point of $(T,X)$?
\end{enumerate}
The answer of (2) of Question~\ref{0.18} is ``yes'' if so is the answer of (1). See Theorems~\ref{thm3.8} and \ref{thm5.8} for weak solutions to (2). Moreover, the minimality of $(T,X)$ is crucial here; otherwise, one can easily construct counterexamples.
\end{que}

\begin{que}[{Existence of invariant measure; \cite[Question~2.8.1]{V77}}]\label{0.19}
Let $(T,X)$ be an a.a. flow. Does $(T,X)$ have an invariant Borel probability measure? If $(T,X)$ is distal, the answer is ``yes'' and was obtained by Furstenberg by using his structure theorem in~\cite{F63}.
\end{que}
\section{Almost automorphy of surjective semiflows}\label{sec1}

\begin{sht}
We will keep the following conventions in this section unless specified otherwise.
\begin{enumerate}
\item Let $(T,X)$ with the phase mapping $(t,x)\mapsto tx$ be a \textbf{surjective} semiflow.

\item Let $T^{-1}=\{t^{-1}\,|\,t\in T\}$ and $T^{-1}\circ T=\{t^{-1}s\,|\,s,t\in T\}$, where for each $t\in T$, we identify $t$ with the transition map $t\colon x\mapsto tx$ and $t^{-1}\colon X\rightsquigarrow X$ is the inverse of $t\colon X\rightarrow X$.

\item Write $\langle T^{-1}\circ T\rangle=\{s_1^{-1}t_1\dotsm s_n^{-1}t_n\,|\,n\ge1\textrm{ and }s_i,t_i\in T\textrm{ for }i=1,\dotsc,n\}$.

\begin{itemize}
\item Here associated to $(T,X)$, $t^{-1}\colon X\rightsquigarrow X$ by $x\mapsto t^{-1}[x]$, for $t\in T$, is a upper semi-continuous set-valued map~\cite[Lemma~0.6]{AD}. That is to say, for $x\in X$ and any neighborhood $U$ of $t^{-1}[x]$, there is a neighborhood $V$ of $x$ such that $t^{-1}[y]\subset U$ for all $y\in V$.
    \begin{proof}
    Let $x_n\to x$ and we will show $t^{-1}[x_n]\subset U$ as $n$ sufficiently big. Suppose the contrary that there is a (subnet of) net
$y_n\in t^{-1}[x_n]\setminus U$ with $y_n\to y$ in $X$. Then $y\not\in t^{-1}[x]$ but
$ty_n=x_n\to x$ and $ty_n\to ty$. So $ty=x$ implies $y=t^{-1}[x]$ a contradiction.
    \end{proof}
\item In addition, $(T,X)$ need not be invertible in this section. Of course, if $(T,X)$ is an invertible semiflow here, then $\langle T^{-1}\circ T\rangle=\langle T\rangle$.
\end{itemize}
\item Let $f\colon X\rightarrow X$ be a continuous surjective map. We say the set-valued map $f^{-1}\colon X\rightsquigarrow X,\ x\mapsto f^{-1}[x]$ is \textit{continuous} if and only if $x_n\to x$ implies $f^{-1}[x_n]\to f^{-1}[x]$ in the sense of Hausdorff topology.
\end{enumerate}
\end{sht}

Let $G$ be a discrete group; then there are two canonical translate flows on $L^\infty(G)$ with the same phase group $G$ under the pointwise topology:
\begin{gather*}
\tau_R\colon G\times L^\infty(G)\rightarrow L^\infty(G),\quad(t,f)\mapsto tf
\qquad\textrm{and}\qquad
\tau_L\colon L^\infty(G)\times G\rightarrow L^\infty(G),\quad(f,t)\mapsto ft.
\end{gather*}
Clearly, $(tf)s=t(fs)$ for all $f\in L^\infty(G)$ and $s,t\in G$; that is to say, $(t,f,s)\mapsto tfs$ is a bitransformation group. Following \cite[Definition~1.2.1]{V65}, $f\in L^\infty(G)$ is called a right Bochner a.a. function on $G$ in case
$t_nf\to g$ and $t_n^{-1}g\to f^\prime$ implies $f=f^\prime$.
Similarly we could define the left Bochner a.a. function on $G$.

Then, $f\in L^\infty(G)$ is right a.a. if and only if it is left a.a. (cf.~\cite[Theorem~1.3.1]{V65} and $\S$\ref{sec0.1}(i)). This symmetric property plays an important role in Veech's arguments on a.a. functions on $G$.

However, given a flow $(T,X)$, since $ts^{-1}\not=s^{-1}t\ \forall s,t\in T$ in general, we are unable to obtain a natural bitransformation flow on $X$. This will cause many essential difficulties for our later discussion.
The principal result of the discussion in $\S\ref{sec1}$ is Theorem~\ref{1.10}, which asserts that an a.a. point of an invertible semiflow satisfies the so-called ``two syndetic sets'' condition.

Recall that $x\in P_{\!aa}(T,X)$ iff for every net $\{t_n\}$ in $T$, $t_nx\to y, x_n^\prime\to x^\prime$ and $t_nx_n^\prime=y$ implies $x=x^\prime$. Such an $x$ is also called a ``Bochner a.a. point'' of $(T,X)$ here to distinguish with the ``Bohr almost automorphy'' of $(T,X)$ that we will define in $\S\ref{sec2}$. At first we here have the following simple observation.

\begin{lem}\label{1.1}
If $(T,X)$ is a.a such that $x\mapsto t^{-1}[x]$ is continuous for $t\in T$, then $(T,X)$ is invertible.
\end{lem}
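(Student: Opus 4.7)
The plan is to combine two consequences of a.a.-ness that were proven just before this lemma — namely that $P_{\!aa}(T,X)$ is forward invariant and that $(T,X)$ is $1$-$1$ at every a.a. point — with the continuity hypothesis on the set-valued inverses, in order to propagate injectivity from the dense orbit of a single a.a. point to all of $X$. Since surjectivity of $(T,X)$ is already a standing hypothesis in this section, proving injectivity of each transition $t\colon X\to X$ will give invertibility.

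Concretely, let $x_0\in P_{\!aa}(T,X)$ with $\mathrm{cls}_X Tx_0=X$, and fix $t\in T$. By item (i) of the basic properties in Definition~\ref{sn0.9}, every point $sx_0$ with $s\in T$ again lies in $P_{\!aa}(T,X)$, and by item (iii) applied to each such point we have $t^{-1}[t(sx_0)]=\{sx_0\}$ as a singleton. Now take an arbitrary $x\in X$ and choose a net $s_n\in T$ with $s_nx_0\to x$. By joint continuity of the phase map, $t(s_nx_0)\to tx$; and by the hypothesis that $y\mapsto t^{-1}[y]$ is continuous for Hausdorff topology, we conclude $t^{-1}[t(s_nx_0)]\to t^{-1}[tx]$ in the Hausdorff topology on closed subsets of $X$. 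But the left hand sides are the singletons $\{s_nx_0\}$, which converge as subsets to $\{x\}$ because $s_nx_0\to x$. Therefore $t^{-1}[tx]=\{x\}$, i.e., $t$ is injective at $x$. Since $x\in X$ and $t\in T$ were arbitrary, every transition is injective, and combined with surjectivity we obtain that $(T,X)$ is invertible.

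There is essentially no hard step here; the only point requiring a little care is the passage from pointwise convergence $s_nx_0\to x$ to Hausdorff convergence of the singletons $\{s_nx_0\}$, which follows immediately from the definition of Hausdorff convergence in a compact $\mathrm{T}_2$ space once one notes that $\{x\}$ is closed and that both inclusions in the Hausdorff distance vanish in the limit. Thus the entire argument is a short density-and-continuity transfer from the distinguished a.a. point $x_0$ to all of $X$, using only Definition~\ref{sn0.9}(i)(iii) and the given continuity of $t^{-1}\colon X\rightsquigarrow X$.
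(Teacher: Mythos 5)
Your proposal is correct and follows essentially the same route as the paper: both arguments use properties (i) and (iii) of Definition~\ref{sn0.9} to see that $t^{-1}[t(\cdot)]$ is the identity on the dense orbit $Tx_0$, then use the assumed Hausdorff-continuity of $y\mapsto t^{-1}[y]$ to extend this to all of $X$, and conclude invertibility from the standing surjectivity hypothesis. Your version merely spells out the density-and-continuity transfer in more detail than the paper does.
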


\begin{proof}
Let $x_0\in P_{\!aa}(T,X)$ such that $X=\textrm{cls}_XTx_0$. Since $t^{-1}t\colon Tx_0\rightarrow Tx_0$ is the identity and $x\mapsto t^{-1}tx$ is continuous in the Hausdorff topology, hence $t^{-1}t$ is the identity on $X$. Since $tX=X$, thus $t^{-1}$ is 1-1 onto. This proves Lemma~\ref{1.1}.
\end{proof}

\begin{sn}[{cf.~\cite{Fur} for $T=\mathbb{Z}_+$}]\label{1.2}
$\,$
\begin{enumerate}
\item A subset $D$ of $T^{-1}\circ T$ is called a \textit{$\Delta$-set} of $T$ if one can find a net $\{t_n\}$ in $T$ such that for all $m$, $t_m^{-1}t_n\in D$ as $n>n_m$ for some $n_m>m$.

\item A subset of $T^{-1}\circ T$ is called a \textit{$\Delta^*$-set} of $T$ if it intersects non-voidly every $\Delta$-set of $T$.
\end{enumerate}
 Here $D$ need not be a subset of $T$ if $T$ is not a group.
\begin{note}
A special case of Definition~\ref{1.2}.1 is \cite[(d) of Definition~1.2]{HS} where a ``$\Delta$-set'' of $T$ is a set $D\subseteq T$ such that there exists a sequence $\{s_i\}_{i=1}^\infty$ in $T$ with $s_n\in s_m D$ for all $m<n$.
\end{note}

Recall that a subset $A$ of $T$ is \textit{thick} if for every compact set $F\subset T$ there is some $t\in T$ with $Ft\subset A$. A set $H\subseteq T$ is referred to as an \textit{IP-set}~\cite{DL} if there exists a sequence $\{p_n\}_{n=1}^\infty$ in $T$ such that
$p_{n_1}p_{n_2}\dotsm p_{n_k}\in H$ for all $1\le n_1<\dotsm<n_k<\infty$ and $1\le k<\infty$.
\begin{itemize}
\item[(\ref{1.2}a)] {\it If $S$ is a thick subset of $T$, then it contains an IP-set of $T$} (cf. \cite[Lemma~9.1]{Fur} for $T=(\mathbb{Z}_+,+)$).

\begin{proof}
Let $S$ be thick and take $p_1\in S$; then there is some $p_2\in T$ with $\{e,p_1\}p_2\subseteq S$. Further there is some $p_3$ such that $\{e,p_1,p_2,p_1p_2\}p_3\subseteq S$ and so on, we can choose a sequence $\{p_n\}$ such that $p_{n_1}\dotsm p_{n_k}\in S$ for all $1\le n_1<\dotsm<n_k<\infty$. This shows that $S$ contains an IP-set of $T$. (This proof is valid for all topological monoid.)
\end{proof}
\item[(\ref{1.2}b)] {\it Let $X_0$ be an $T$-invariant subset of $X$ such that $t^{-1}t_{|X_0}=\textit{id}_{X_0}$ for all $t\in T$. If $H\subseteq T$ is an IP-set, then $H$ is a $\Delta$-set in $T$ w.r.t. $(T,X_0)$}.
\begin{proof}
Let $\{p_n\}_{n=1}^\infty$ be a sequence in $T$ with $p_{n_1}p_{n_2}\dotsm p_{n_k}\in H$ for $1\le n_1<\dotsm<n_k<\infty$ and $1\le k<\infty$. Now set $t_n=p_1p_2\dotsm p_n$ for $n=1,2,\dotsc$ and then $\{t_n\}$ is a sequence in $T$ such that
$t_n=t_mp_{m+1}\dotsm p_n$ so that ${t_m^{-1}t_n}_{|X_0}={t_m^{-1}t_mp_{m+1}\dotsm p_n}_{|X_0}={p_{m+1}\dotsm p_n}_{|X_0}\in H$ for $m<n$.
Thus $H$ is a $\Delta$-set in $T$.
\end{proof}
We notice here that if $(T,X)$ is not invertible, then ``$t_m^{-1}t_n\in H$'' generally makes no sense in the above proof. So we have to consider ${t_m^{-1}t_n}_{|X_0}$ here.
\end{itemize}
\end{sn}

It should be mentioned that a $\Delta$-set and $\Delta^*$-set of $T$ need not be a subset of $T$ if $T$ is not a group in our setting. However, any IP-set of $T$ is always a subset of $T$.

\begin{sn}[{cf.~\cite{Fur} for $T=\mathbb{Z}_+$}]\label{1.3}
Let $(T,X)$ be any semiflow. Then:
\begin{enumerate}
\item An $x\in X$ is called a \textit{$\Delta^*$-recurrent point} if for all neighborhood $U$ of $x$ the set
\begin{gather*}
N_{T^{-1}\circ T}(x,U)=\{\tau\in T^{-1}\circ T\,|\,\tau x\subset U\}
\end{gather*}
is a $\Delta^*$-set of $T$.

\item An $x\in X$ is called an \textit{IP$^*$-recurrent point} of $(T,X)$ if for all neighborhood $U$ of $x$,
$N_{T}(x,U)$
is an IP$^*$-set of $T$, i.e., $N_{T}(x,U)$ intersects non-voidly every IP-set of $T$.
\end{enumerate}

We notice here that when $T$ is group, then $N_{T^{-1}\circ T}(x,U)=N_T(x,U)$. However, in general, it only holds that $N_{T^{-1}\circ T}(x,U)\supset N_T(x,U)$.
\end{sn}

\begin{term}\label{1.4}
Let $x\in X$, $N\subset\langle T^{-1}\circ T\rangle$; then:
\begin{enumerate}
\item For $\varepsilon\in\mathscr{U}_X$ write
\begin{gather*}
C_\varepsilon(N,x)=\{t\in T^{-1}\circ T\,|\,stx\subseteq\varepsilon[sx]\textrm{ and }\varepsilon[stx]\supseteq sx\ \forall s\in N\}.
\end{gather*}
Clearly $C_\varepsilon(\{e\},x)=N_{T^{-1}\circ T}(x,\varepsilon[x])$.

\item When $d\colon X\times X\rightarrow\mathbb{R}$ is a continuous pseudo-metric on $X$ and $\varepsilon>0$, set
\begin{gather*}
B_{\varepsilon,d}[A]=\{y\in X\,|\,\exists a\in A\textit{ s.t. }d(a,y)<\varepsilon\}\intertext{and then write}
C_{\varepsilon,d}(N,x)=\{t\in T^{-1}\circ T\,|\,stx\subset B_{\varepsilon,d}[sx]\textrm{ and }B_{\varepsilon,d}[stx]\supset sx\ \forall s\in N\}.
\end{gather*}
By $d_H(\cdot,\cdot)$ we denote the Hausdorff pseudo-metric induced by $d$ on $X$. Then we have
\begin{gather*}
C_{\varepsilon,d}(N,x)=\{t\in T^{-1}\circ T\,|\,d_H(sx,stx)<\varepsilon\ \forall s\in N\}.
\end{gather*}
\end{enumerate}
\end{term}

It should be noted that the $\Delta^*$-recurrence and $C_\varepsilon(N,x)$ depend on both $(T,X)$ and $(X,T)$, but IP$^*$-recurrence depends only on $(T,X)$.

\begin{lem}\label{1.5}
Let $x\in P_{\!aa}(T,X)$ and $\varepsilon\in\mathscr{U}_X$. Then
$N_{T^{-1}\circ T}(x,\varepsilon[x])$ is a $\Delta^*$-set of $T$. Moreover, if $(T,X)$ is such that $x\mapsto t^{-1}[x]$ is continuous for $t\in T$ and $N\subset\langle T^{-1}\circ T\rangle$ finite, then $C_\varepsilon(N,x)$ is a $\Delta^*$-set of $T$.
\end{lem}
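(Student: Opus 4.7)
The plan is to invoke the definition of $\Delta^*$-set directly: given an arbitrary $\Delta$-set $D \subseteq T^{-1} \circ T$ with witnessing net $\{t_n\}$, I will produce indices $m, n$ with $n > n_m$ such that $t_m^{-1} t_n$ lies in the target set. By compactness of $X$, pass to a subnet so that $t_n x \to y$ for some $y \in X$. The key tool throughout is the observation recorded right after Definition~\ref{sn0.9}: since $x \in P_{\!aa}(T,X)$ and $t_n x \to y$, one has $t_n^{-1}[y] \to \{x\}$ in the Hausdorff topology.

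For the first assertion, I fix an index $m$ in the net large enough that $t_m^{-1}[y] \subseteq \varepsilon[x]$ (a single open containment), and then invoke upper semi-continuity of $t_m^{-1}$, recorded in the standing hypothesis of $\S\ref{sec1}$, to obtain an open neighborhood $V$ of $y$ with $t_m^{-1}[V] \subseteq \varepsilon[x]$. Since $t_n x \to y$, an index $n > n_m$ with $t_n x \in V$ exists, and then $t_m^{-1} t_n x \subseteq \varepsilon[x]$, i.e., $t_m^{-1} t_n \in N_{T^{-1} \circ T}(x, \varepsilon[x]) \cap D$.

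For the second assertion, the additional hypothesis upgrades every $s \in N \subseteq \langle T^{-1} \circ T\rangle$ to a continuous set-valued map $X \rightsquigarrow X$, since each such $s$ is a composition of the continuous maps $t \in T$ with the (now continuous) set-valued inverses $t^{-1}$. Because $N$ is finite and $X$ is compact Hausdorff, the members of $N$ are jointly uniformly Hausdorff-continuous in the following sense: for the given $\varepsilon$ there exists $\alpha \in \mathscr{U}_X$ such that every nonempty $A \subseteq X$ satisfying $A \subseteq \alpha[x]$ also satisfies $sA \subseteq \varepsilon[sx]$ and $sx \subseteq \varepsilon[sA]$ for every $s \in N$; the reverse inclusion uses that symmetry of $\alpha$ together with nonemptiness of $A$ force $x \in \alpha[A]$. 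Applying the first assertion with $\alpha$ in place of $\varepsilon$ now furnishes indices $m, n$ with $n > n_m$ and $t_m^{-1} t_n x \subseteq \alpha[x]$; surjectivity of $(T, X)$ ensures $t_m^{-1}[t_n x]$ is nonempty, so the above uniform property yields $t_m^{-1} t_n \in C_\varepsilon(N, x) \cap D$.

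The main obstacle is the second assertion: one must pass from the one-sided containment delivered by upper semi-continuity to the two-sided Hausdorff-type estimate required by the definition of $C_\varepsilon(N, x)$, and then propagate that estimate through arbitrary compositions $s \in N$. Surjectivity rescues the reverse inclusion essentially for free, while joint uniform Hausdorff-continuity of finitely many continuous set-valued maps on a compact Hausdorff space absorbs the propagation through $N$; the finiteness of $N$ is essential here, since it lets one intersect the finitely many required $\alpha_s$ into a single $\alpha \in \mathscr{U}_X$.
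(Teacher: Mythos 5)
Your proof is correct and takes essentially the same route as the paper's: pass to a subnet with $t_nx\to y$, use the almost automorphy of $x$ to get $t_n^{-1}[y]\to\{x\}$ in the Hausdorff topology, and then combine this with upper semi-continuity of $t_m^{-1}$ to place some $t_m^{-1}t_n$ (with $n>n_m$) in the target set; the only cosmetic difference is that the paper frames this as a proof by contradiction while you argue directly. Your second paragraph, reducing membership in $C_\varepsilon(N,x)$ to the containment $t_m^{-1}t_nx\subseteq\alpha[x]$ via Hausdorff-continuity at $x$ of the finitely many set-valued maps in $N$ (with surjectivity supplying nonemptiness), is a legitimate and welcome filling-in of what the paper dismisses as ``a slight modification of the above argument.''
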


\begin{proof}
Assume for a contradiction that there exists an $\varepsilon\in\mathscr{U}_X$ such that $N_{T^{-1}\circ T}(x,\varepsilon[x])$ is not a $\Delta^*$-set of $T$.
Let $D\subseteq T^{-1}\circ T$ be a $\Delta$-set of $T$ corresponding to a net $\{t_n\}$ in $T$ following Definition~\ref{1.2} such that $D\cap N_{T^{-1}\circ T}(x,\varepsilon[x])=\emptyset$.
Passing to a subnet of $\{t_n\}$ if necessary, let $t_nx\to x^\prime$ and $t_n^{-1}[x^\prime]\to\{x\}$ in $X$ in the sense of Hausdorff topology. Then $t_n^{-1}[x^\prime]\subset\varepsilon[x]$ as $n$ sufficiently big. From this it follows readily that as $m$ sufficiently large, for some $n_m$,
$t_m^{-1}[t_nx]\subseteq\varepsilon[x]$ for all $n\ge n_m$.
This implies $D\cap N_{T^{-1}\circ T}(x,\varepsilon[x])\not=\emptyset$ a contradiction. Thus $N_{T^{-1}\circ T}(x,\varepsilon[x])$ must be a $\Delta^*$-set of $T$.

Finally we can obtain the second part by a slight modification of the above argument with $C_\varepsilon(N,x)$ in place of $N_{T^{-1}\circ T}(x,\varepsilon[x])$. The proof is complete.
\end{proof}

We now consider the almost automorphy from the point of view of recurrence. The following theorem shows that an a.a. point has very strong recurrence.

\begin{thm}[{cf.~\cite[Theorem~9.13]{Fur} for $T=\mathbb{Z}$}]\label{1.6}
A point of $X$ is $\Delta^*$-recurrent for $(T,X)$ if and only if it is an a.a. point of $(T,X)$.
\end{thm}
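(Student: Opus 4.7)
The forward implication $x\in P_{\!aa}(T,X)\Rightarrow x$ is $\Delta^*$-recurrent is immediate from Lemma~\ref{1.5}: for each $\varepsilon\in\mathscr{U}_X$ the set $N_{T^{-1}\circ T}(x,\varepsilon[x])$ is a $\Delta^*$-set of $T$, and $\{\varepsilon[x]\,|\,\varepsilon\in\mathscr{U}_X\}$ is a neighborhood base at $x$.

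For the converse I work by contrapositive. Suppose $x\notin P_{\!aa}(T,X)$. Unwinding Definition~\ref{sn0.9}, fix a net $\{t_n\}$ in $T$ together with $y\in X$, $x'\in X\setminus\{x\}$ and a net $\{x_n'\}$ in $X$ with $t_nx\to y$, $x_n'\to x'$, and $t_nx_n'=y$ for every $n$. Since $X$ is a compact $\mathrm{T}_2$-space, pick a symmetric open $\varepsilon\in\mathscr{U}_X$ with $x'\notin\mathrm{cls}_X\varepsilon[x]$; after passing to a cofinal tail, $x_m'\notin\mathrm{cls}_X\varepsilon[x]$ for every index $m$ as well. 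Put $N:=N_{T^{-1}\circ T}(x,\varepsilon[x])$. The aim is to use the very net $\{t_n\}$ to build, via Definition~\ref{1.2}, a $\Delta$-set $D=\bigcup_m\{t_m^{-1}t_n\,|\,n>n_m\}$ disjoint from $N$, which refutes $\Delta^*$-recurrence at $x$. Thus for each $m$ I must locate an index $n_m>m$ such that $n>n_m$ implies $t_m^{-1}[t_nx]\not\subseteq\varepsilon[x]$.

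The selection of these $n_m$ is the main technical step. In the invertible case (Lemma~\ref{1.1}), $t_m^{-1}$ is a continuous single-valued map and $t_m^{-1}(t_nx)\to t_m^{-1}(y)=x_m'\notin\mathrm{cls}_X\varepsilon[x]$, so any sufficiently large $n_m$ works. The \emph{main obstacle} is the general surjective setting, where $t_m^{-1}\colon X\rightsquigarrow X$ is merely upper semi-continuous; there one needs to produce, for $n$ beyond a threshold, a preimage $w_{m,n}\in t_m^{-1}[t_nx]$ lying outside $\varepsilon[x]$. My plan is a diagonal subnet argument in the compact hyperspace $2^X$: refine $\{t_n\}$ so that for each fixed $m$ the closed sets $t_m^{-1}[t_nx]$ Hausdorff-converge to a limit $A_m\subseteq t_m^{-1}[y]$, and then show $A_m$ meets the complement of $\mathrm{cls}_X\varepsilon[x]$ near $x_m'$. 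This is to be derived from three auxiliary facts: (i) $\Delta^*$-recurrence forces the $1$-$1$-at-$x$ identity $t^{-1}[tx]=\{x\}$ for every $t\in T$, obtained by plugging the constant net $\{t\}$ into Definition~\ref{1.2}; (ii) the exact equality $t_nx_n'=y$ places $x_m'$ inside $t_m^{-1}[y]$ outside $\mathrm{cls}_X\varepsilon[x]$; and (iii) surjectivity and continuity of $t_m$, used to accumulate preimages of the points $t_nx\to y$ at $x_m'$. Once such $n_m$ exist, $D$ is a $\Delta$-set disjoint from $N$ and the contrapositive conclusion follows.
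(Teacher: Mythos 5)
Your proof follows the same route as the paper's. The direction ``a.a.\ $\Rightarrow$ $\Delta^*$-recurrent'' is quoted from Lemma~\ref{1.5} in both. Your converse is just the contrapositive of the paper's argument by contradiction: both take the net $\{t_n\}$ witnessing the failure of almost automorphy and turn it into a $\Delta$-set that is disjoint from $N_{T^{-1}\circ T}(x,\varepsilon[x])$ (the paper packages this as the $\Delta$-set $A=\{\tau\,|\,\tau x\cap V_{x''}\neq\emptyset\}$ with $V_{x''}$ disjoint from a neighborhood of $x$, which is the same construction). So the skeleton is identical, and in the invertible case --- where $t_m^{-1}(t_nx)\to t_m^{-1}(y)=x_m'\notin\mathrm{cls}_X\varepsilon[x]$ by continuity of $t_m^{-1}$ --- your argument is complete.

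The one place you go beyond the paper is in trying to justify, for a merely surjective semiflow, that $t_m^{-1}[t_nx]$ eventually contains a point outside $\mathrm{cls}_X\varepsilon[x]$. Your auxiliary fact (iii) --- that surjectivity and continuity of $t_m$ let you ``accumulate preimages of the points $t_nx\to y$ at $x_m'$'' --- is not a valid general principle: the set-valued map $z\mapsto t_m^{-1}[z]$ is only \emph{upper} semi-continuous (as the Standing Hypothesis of \S\ref{sec1} records), so the Hausdorff limit $A_m$ of the sets $t_m^{-1}[t_nx]$ is only known to be \emph{contained} in $t_m^{-1}[y]$; nothing forces it to contain $x_m'$ or any point near it. A continuous surjection of a compact space can have preimages of points $z_n\to y$ clustering at a single point of the fibre over $y$ while the fibre itself is large, so your diagonal-subnet plan as described does not close the gap in the non-invertible case. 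To be fair, the paper's own proof asserts exactly the same claim (``for $m$ sufficiently big there is an $n_0(m)$ such that $t_m^{-1}[t_nx]\cap V_{x''}\neq\emptyset$ for $n\ge n_0$'') with no justification, so you have correctly isolated where the real difficulty lies; you have just not resolved it, and neither does the paper.
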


\begin{proof}
Let $x\in X$ be a $\Delta^*$-recurrent point of $(T,X)$ and suppose that $t_nx\to x^\prime, x_n^{\prime\prime}\to x^{\prime\prime}$ and $t_nx_n^{\prime\prime}=x^\prime$ for some net $\{t_n\}$ in $T$. We need to prove $x=x^{\prime\prime}$. If $x\not=x^{\prime\prime}$, let $V_x$ and $V_{x^{\prime\prime}}$ be two disjoint neighborhoods of $x$ and $x^{\prime\prime}$, respectively. Then $N_{T^{-1}\circ T}(x,V_x)$ is a $\Delta^*$-set of $T$ so that it intersects non-voidly every $\Delta$-set of $T$. Since for $m$ sufficiently big there is an $n_0=n_0(m)$ such that $t_m^{-1}[t_nx]\cap V_{x^{\prime\prime}}\not=\emptyset$ for $n\ge n_0$, hence $A=\{\tau\in T^{-1}\circ T\,|\,\tau x\cap V_{x^{\prime\prime}}\not=\emptyset\}$ is a $\Delta$-set of $T$ by Definition~\ref{1.2}. This is a contradiction to $A\cap N_{T^{-1}\circ T}(x,V_x)=\emptyset$, and so $x=x^{\prime\prime}$.

Conversely, suppose that $x\in P_{\!aa}(T,X)$. Then by Lemma~\ref{1.5}, it follows that $x$ is $\Delta^*$-recurrent for $(T,X)$. This proves Theorem~\ref{1.6}.
\end{proof}

\begin{sn}\label{1.7}
Let $(T,X)$ be any semiflow, which is not necessarily surjective.
\begin{enumerate}
\item[(1)] An $x\in X$ will be called a \textit{distal point} of $(T,X)$ if $x$ is proximal only to itself in $\mathrm{cls}_XTx$.
\item[(2)] If $(T,X)$ is pointwise distal, then $(T,X)$ is said to be \textit{distal}.
\item[(3)] If $x\in X$ is a distal point such that $\textrm{cls}_XTx=X$, then $(T,X)$ is called \textit{point-distal}.
\end{enumerate}
We note here that every distal point is a.p. for all semiflow; see, e.g., \cite{Fur,AD}. Moreover:
\begin{itemize}
\item A point of $X$ is distal if and only if it is IP$^*$-recurrent~\cite[Theorem~4]{DL}; and for $t\in T$, $t^{-1}tx=x$ for every distal point $x$ of a point-distal semiflow $(T,X)$.
\end{itemize}
By
\begin{gather*}
P(X)={\bigcup}_{\varepsilon\in\mathscr{U}_X}{\bigcap}_{t\in T}t^{-1}[\varepsilon]
\end{gather*}
it follows easily that
\begin{itemize}
\item $(T,X)$ is distal if and only if $P(X)=\varDelta_X$.
\end{itemize}
\end{sn}

The following is a consequence of Theorem~\ref{1.6} and (\ref{1.2}b). However, we will present an independent proof here.

\begin{lem}[{cf.~Furstenberg~\cite[Corollary to Theorem~9.13]{Fur} for $T=\mathbb{Z}$}]\label{1.8}
Every $x$ of $P_{\!aa}(T,X)$ is a distal point of $(T,X)$.
\end{lem}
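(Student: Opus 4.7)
The plan is to deduce distality of $x$ from the strong recurrence furnished by Theorem~\ref{1.6}, together with observation~(\ref{1.2}b) and the characterization of distal points as IP$^*$-recurrent points recorded in Definition~\ref{1.7} (from \cite[Theorem~4]{DL}). More concretely, I will show that the a.a.\ point $x$ is IP$^*$-recurrent, from which distality follows immediately.

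Fix a neighborhood $U$ of $x$ and an arbitrary IP-set $H\subset T$ generated by a sequence $\{p_n\}_{n=1}^\infty$ in $T$. I will set $t_n=p_1p_2\cdots p_n\in T$. By observation~(i) in Definition~\ref{sn0.9}, $Tx\subset P_{\!aa}(T,X)$; by observation~(iii) in Definition~\ref{sn0.9}, every transition from $T$ is one-to-one on $Tx$. Consequently, for $m<n$,
\begin{equation*}
t_m^{-1}[t_nx]=t_m^{-1}\bigl[t_m(p_{m+1}\cdots p_nx)\bigr]=\{p_{m+1}\cdots p_nx\}.
\end{equation*}
The family $D=\{t_m^{-1}t_n\,|\,m<n\}\subset T^{-1}\circ T$ is then a $\Delta$-set of $T$ in the sense of Definition~\ref{1.2}, so by Theorem~\ref{1.6} the $\Delta^*$-set $N_{T^{-1}\circ T}(x,U)$ meets $D$. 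This yields indices $m<n$ with $\{p_{m+1}\cdots p_nx\}=t_m^{-1}[t_nx]\subset U$, whence $p_{m+1}\cdots p_n\in H\cap N_T(x,U)$. Since $H$ was an arbitrary IP-set, $N_T(x,U)$ is an IP$^*$-set of $T$, and $U$ was arbitrary, so $x$ is IP$^*$-recurrent, i.e., distal.

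The main delicacy is that $N_{T^{-1}\circ T}(x,U)$ is recorded inside the potentially much larger $T^{-1}\circ T$, whereas the IP-set $H$ sits inside $T$; the bridge is provided precisely by observation~(iii) in Definition~\ref{sn0.9}, which collapses the a priori multi-valued preimage $t_m^{-1}[t_nx]$ to the single-valued action of $p_{m+1}\cdots p_n\in T$ at the a.a.\ point $x$. Without this collapse, the required passage from $\Delta^*$-recurrence of $x$ to IP$^*$-recurrence of $x$ would not be available in a merely surjective (non-invertible) semiflow. A natural alternative would be to proceed directly from the definition of almost automorphy by chasing nets through upper semi-continuity of $t^{-1}$, but the non-uniformity of this upper semi-continuity (the witness neighborhood of $y$ depends on the index $n$) is the obstacle which makes this direct route awkward and which is precisely what the $\Delta^*$-formalism circumvents.
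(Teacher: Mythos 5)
Your argument is correct, and it is precisely the route the paper flags immediately before the lemma (``The following is a consequence of Theorem~\ref{1.6} and (\ref{1.2}b). However, we will present an independent proof here.'') but then declines to write out. The paper's own proof is the ``independent'' one: it argues by contradiction, assuming $N_T(x,U)$ is not IP$^*$, extracting an IP-set in the complement generated by $\{s_n\}$, forming the telescoping products $t_n=s_1\dotsm s_n$, passing to a convergent subnet, and deducing $t_m^{-1}[t_nx]\subset U$ directly from the net definition of almost automorphy; the contradiction then comes from the trivial containment $s_{m+1}\dotsm s_nx\in t_m^{-1}[t_m(s_{m+1}\dotsm s_nx)]\subset U$. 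You instead argue directly, routing the same telescoping construction through the $\Delta$/$\Delta^*$ formalism of Definition~\ref{1.2} and Theorem~\ref{1.6} (equivalently Lemma~\ref{1.5}). Both proofs are the same mechanism in different packaging, and both finish by invoking the equivalence of distality with IP$^*$-recurrence from Definition~\ref{1.7}; what your version buys is reuse of already-established machinery, while the paper's version stays self-contained and, in particular, avoids any appeal to the observations in Definition~\ref{sn0.9}.

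One remark on your closing paragraph: the appeal to observations (i) and (iii) of Definition~\ref{sn0.9} is harmless but not actually needed, and your claim that ``without this collapse the passage \dots would not be available'' overstates the point. To conclude $p_{m+1}\dotsm p_n\in N_T(x,U)$ from $t_m^{-1}[t_nx]\subseteq U$ you only need the one inclusion $p_{m+1}\dotsm p_nx\in t_m^{-1}\bigl[t_m(p_{m+1}\dotsm p_nx)\bigr]$, which holds trivially for any map ($y\in t^{-1}[ty]$ always); that the preimage is possibly multi-valued does no harm, since the \emph{entire} preimage lands in $U$. This is exactly how the paper's proof gets by in the merely surjective setting without invoking injectivity at a.a.\ points.
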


\begin{proof}
Let $x\in P_{\!aa}(T,X)$ and we will show $x$ is an IP$^*$-recurrent point of $(T,X)$. Suppose the contrary that there is a neighborhood $U$ of $x$ such that $N_T(x,U)$ is not an IP$^*$-set in $T$. Then $H:=T\setminus N_T(x,U)$ contains an IP-set of $T$. So there is a sequence $\{s_n\}$ in $T$ such that $s_{n_1}\dotsm s_{n_k}\in H$ for $1\le n_1<n_2<\dotsm<n_k<\infty$ and $1\le k$. Let $t_n=s_1s_2\dotsm s_n$ and we can assume (a subnet of) $t_nx\to z$. Then there is an $m$ such that $t_m^{-1}[t_nx]\subset U$ as $n>m$ sufficiently big. This implies $t_m^{-1}[t_m(s_{m+1}\dotsm s_nx)]\subset U$. Thus $s_{m+1}\dotsm s_nx\in U$, a contradiction to $s_{m+1}\dotsm s_n\in H=T\setminus N_T(x,U)$. Hence $x$ is a distal point of $(T,X)$. The proof is complete.
\end{proof}

\begin{cor}\label{cor1.8A}
If $x\in P_{\!aa}(T,X)$ and $\varepsilon\in\mathscr{U}_X$, then $N_T(x,\varepsilon[x])$ is discretely syndetic in $T$ and so $T\cap N_{T^{-1}\circ T}(x,\varepsilon[x])$ is discretely syndetic in $T$.
\end{cor}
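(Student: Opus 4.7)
The plan is to deduce Corollary~\ref{cor1.8A} from the chain of implications ``almost automorphic $\Rightarrow$ distal $\Rightarrow$ almost periodic,'' combined with the topology-independence of the a.p.\ property recorded in Definition~\ref{sn0.2}. No new dynamical arguments should be required---everything sits one step beyond Lemma~\ref{1.8}.

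\textbf{Step 1.} By Lemma~\ref{1.8} (just established), the point $x\in P_{\!aa}(T,X)$ is a distal point of $(T,X)$.  By the remark following Definition~\ref{1.7}, every distal point of a semiflow is almost periodic.  Therefore $x$ is an a.p.\ point of $(T,X)$, so by Definition~\ref{sn0.2}(2) the set $N_T(x,\varepsilon[x])$ is syndetic in $T$.

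\textbf{Step 2.} I would now appeal to the explicit remark in Definition~\ref{sn0.2} that ``whether or not $x$ is an a.p.\ point does not depend upon the topology on $T$,'' the reason being that a.p.\ is equivalent to minimality of $\mathrm{cls}_X Tx$, a property intrinsic to the action.  Consequently $x$ is also a.p.\ when $T$ is given the discrete topology, which upgrades the syndeticity to \emph{discrete} syndeticity: there is a \emph{finite} set $K\subset T$ with $Kt\cap N_T(x,\varepsilon[x])\neq\emptyset$ for every $t\in T$.  This yields the first assertion.

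\textbf{Step 3.} For the second assertion, I would verify the inclusion
\begin{equation*}
N_T(x,\varepsilon[x])\;\subseteq\;T\cap N_{T^{-1}\circ T}(x,\varepsilon[x]),
\end{equation*}
and then observe that any superset of a discretely syndetic set is discretely syndetic (with the same finite witness $K$).  The inclusion is routine: for $t\in T$ one has $t=e^{-1}t\in T^{-1}\circ T$ (since $e^{-1}$ acts as the identity on $X$), and if $tx\in\varepsilon[x]$ then the singleton $\{tx\}\subset\varepsilon[x]$, so $t$ lies in $N_{T^{-1}\circ T}(x,\varepsilon[x])$.

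\textbf{Anticipated obstacle.} There is essentially no real difficulty; the only thing that needs a moment of care is invoking the topology-independence of almost periodicity correctly, so that one genuinely extracts a \emph{finite} $K$ rather than merely a compact one.  Once Lemma~\ref{1.8} is in hand, the corollary is a short bookkeeping consequence, and no additional use of the a.a.\ hypothesis or of the uniform structure beyond what appears in the definitions is needed.
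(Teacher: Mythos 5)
Your proof is correct. The paper states Corollary~\ref{cor1.8A} without proof, as an immediate consequence of Lemma~\ref{1.8}, so there is no ``official'' argument to compare against; your chain (a.a.\ $\Rightarrow$ distal by Lemma~\ref{1.8} $\Rightarrow$ a.p.\ by the remark after Definition~\ref{1.7} $\Rightarrow$ discretely a.p.\ via the equivalence of almost periodicity with minimality of $\mathrm{cls}_XTx$, which is topology-independent) is a legitimate way to fill the gap, and your Step~3 inclusion $N_T(x,\varepsilon[x])\subseteq T\cap N_{T^{-1}\circ T}(x,\varepsilon[x])$ matches the paper's own observation that $N_{T^{-1}\circ T}(x,U)\supseteq N_T(x,U)$. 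The only remark worth making is that the placement of the corollary suggests a slightly more self-contained route: the proof of Lemma~\ref{1.8} actually establishes that $x$ is IP$^*$-recurrent, and if $N_T(x,\varepsilon[x])$ were not discretely syndetic its complement would be discretely thick, hence would contain an IP-set by item~(\ref{1.2}a), contradicting IP$^*$-recurrence. That version avoids invoking the external facts ``distal $\Rightarrow$ a.p.'' and ``a.p.\ $\Leftrightarrow$ minimal orbit closure,'' using only material proved inside Section~\ref{sec1}; but both deductions are short and sound, and yours correctly isolates the one point needing care, namely that the witness set must be finite rather than merely compact.
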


It should be noted here that the ``onto'' condition of $(T,X)$ has played a role in the proof of Lemma~\ref{1.8} and so in Corollary~\ref{cor1.8A}.

Let $N\subset\langle T^{-1}\circ T\rangle$; then $M$ is called a \textit{superset} of $N$ if $N\subseteq M$. The following important result Theorem~\ref{1.10} is a generalization to Veech~\cite[Lemma~2.1.2]{V65} from Bochner a.a. functions on a discrete group to a.a. points of a surjective semiflow on a compact $\textrm{T}_2$ space, which shows that $C_\varepsilon(N,x)$ is a ``big'' subset of $T^{-1}\circ T$ because $T\cap C_\delta(M,x)$ is syndetic in $T$ by Corollary~\ref{cor1.8A}.

\begin{thm}\label{1.10}
Let $(T,X)$ be such that $x\mapsto t^{-1}[x]$ is continuous for all $t\in T$ and $x\in P_{\!aa}(T,X)$. Given $\varepsilon\in\mathscr{U}_X$ and a finite set $N\subset\langle T^{-1}\circ T\rangle$, there exist a $\delta\in\mathscr{U}_X$ and a finite superset $M$ of $N$ such that $\sigma^{-1}\tau\in C_\varepsilon(N,x)$ for all $\sigma,\tau\in T\cap C_\delta(M,x)$.
\end{thm}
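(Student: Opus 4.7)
The plan is to proceed by contradiction. Suppose the conclusion fails for some fixed $\varepsilon_0\in\mathscr{U}_X$ and some finite $N_0\subseteq\langle T^{-1}\circ T\rangle$; enlarge $N_0$ so that $e\in N_0$. Then for every $\delta\in\mathscr{U}_X$ and every finite $M\supseteq N_0$ there exist $\sigma=\sigma_{\delta,M}$ and $\tau=\tau_{\delta,M}$ in $T\cap C_\delta(M,x)$ with $\sigma^{-1}\tau\notin C_{\varepsilon_0}(N_0,x)$. Directing the index set by refinement of $\delta$ and by inclusion of $M$ yields nets $\{\sigma_\alpha\}$ and $\{\tau_\alpha\}$ in $T$; and because $N_0$ is finite, a subnet extraction fixes a single $s_0\in N_0$ which witnesses the failure for every $\alpha$. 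The two possible forms this failure may take ($s_0\sigma_\alpha^{-1}\tau_\alpha x\not\subseteq\varepsilon_0[s_0x]$, or $\varepsilon_0[s_0\sigma_\alpha^{-1}\tau_\alpha x]\not\supseteq s_0 x$) are symmetric, so I will treat the first.

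Pick $y_\alpha\in s_0\sigma_\alpha^{-1}\tau_\alpha x$ with $y_\alpha\notin\varepsilon_0[s_0 x]$. Writing $s_0$ as a reduced word $u_1^{-1}v_1\cdots u_k^{-1}v_k$ and unfolding the definition of $s_0(\,\cdot\,)$, trace $y_\alpha$ back through the word and, by compactness of $X$, pass to a subnet so that all intermediate points converge; in particular one obtains some $z_\alpha\in\sigma_\alpha^{-1}[\tau_\alpha x]$ (so $\sigma_\alpha z_\alpha=\tau_\alpha x$) with $z_\alpha\to z$ and $y_\alpha\to y$. Since $X\setminus\varepsilon_0[s_0x]$ is closed, $y\notin\varepsilon_0[s_0x]$. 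The goal becomes to prove $z=x$, since then by continuity of the set-valued map $s_0$ (a composition of continuous single-valued transitions $v_i$ and continuous set-valued $u_i^{-1}$, the latter being guaranteed by the standing hypothesis of the theorem) one gets $y\in s_0(z)=s_0(x)\subseteq\varepsilon_0[s_0x]$, the desired contradiction.

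To prove $z=x$: since $e\in N_0\subseteq M_\alpha$ and $\delta_\alpha$ refines, $\sigma_\alpha x\to x$ and $\tau_\alpha x\to x$; by property~(ii) of Section~\ref{sn0.9}, $\sigma_\alpha^{-1}[x]\to\{x\}$ in the Hausdorff topology. The remaining task is to promote this to $\sigma_\alpha^{-1}[\tau_\alpha x]\to\{x\}$, which would give $z=x$. The principal obstacle is that each $\sigma_\alpha^{-1}$ is continuous only individually (by hypothesis), not uniformly in $\alpha$, so one cannot naively compose the limits $\tau_\alpha x\to x$ and $\sigma_\alpha^{-1}[x]\to\{x\}$. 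To overcome this, exploit the flexibility of the contradiction hypothesis: at each stage of the net's construction, further require $\delta_\alpha$ to be fine enough that $\tau_\alpha x$ lies within the continuity neighborhood of $\sigma_\beta^{-1}$ at $x$ for all previously chosen $\sigma_\beta$. Combined with a reduction to the minimal invariant set $\mathrm{cls}_X Tx$, on which Lemma~\ref{1.1} ensures that the restricted semiflow is invertible and each $\sigma_\alpha^{-1}$ a single-valued homeomorphism, a diagonal extraction then produces the required convergence $z_\alpha\to x$.
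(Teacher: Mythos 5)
There is a genuine gap at the decisive step, and it is exactly the step you flag as the ``principal obstacle.'' What you need is that $\sigma_\alpha^{-1}[\tau_\alpha x]\to\{x\}$ for the \emph{matched} pairs supplied by the contradiction hypothesis. Property~(ii) of Definition~\ref{sn0.9} only gives $\sigma_\alpha^{-1}[y]\to\{x\}$ for the \emph{fixed} limit point $y=\lim_\alpha\sigma_\alpha x=x$; the definition of $P_{\!aa}$ requires $\sigma_\alpha z_\alpha=y$ exactly, whereas here $\sigma_\alpha z_\alpha=\tau_\alpha x$ merely converges to $x$. The assertion ``$\sigma_\alpha x\to x$, $\sigma_\alpha z_\alpha\to x$, $z_\alpha\to z$ imply $z=x$'' is precisely $D[x]=\{x\}$ (equivalently $U[x]=\{x\}$ in the notation of Theorem~\ref{thm5.6}), which is strictly stronger than $x\in P_{\!aa}(T,X)$ (i.e.\ $V[x]=\{x\}$) as a matter of soft logic; the paper only recovers $D[x]=\mathrm{cls}_XV[x]$ much later (Lemma~\ref{lem5.10}, Theorem~\ref{thm5.11}) by a quantitative construction of the same Veech type as the proof of Theorem~\ref{1.10} itself. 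So your argument is circular in spirit: the missing implication is essentially the theorem being proved.

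Your proposed repair does not close this gap. Requiring $\delta_\alpha$ so fine that $\tau_\alpha x$ lies in the continuity neighborhood of $\sigma_\beta^{-1}$ at $x$ for \emph{previously chosen} $\sigma_\beta$ controls only the off-diagonal terms $\sigma_\beta^{-1}[\tau_\alpha x]$ with $\beta$ preceding $\alpha$. The term you must control is the diagonal one, $\sigma_\alpha^{-1}[\tau_\alpha x]$, and there $\sigma_\alpha$ and $\tau_\alpha$ are produced \emph{simultaneously} by the negation of the conclusion: $\delta_\alpha$ is fixed before $\sigma_\alpha$ appears, and without an equicontinuity hypothesis on $\{\sigma_\alpha^{-1}\}$ nothing forces $\delta_\alpha[x]$ inside the continuity neighborhood of $\sigma_\alpha^{-1}$ at $x$. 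A diagonal extraction gives a subnet along which $\sigma_\beta^{-1}[\tau_{\alpha(\beta)}x]\to\{x\}$ for some re-pairing $\beta\mapsto\alpha(\beta)$, but the hypothesis $\sigma^{-1}\tau\notin C_{\varepsilon_0}(N_0,x)$ is a property of the matched pair only, so the contradiction evaporates after re-pairing. This is why the paper's proof is not a compactness argument at all: it fixes a continuous pseudo-metric $d$, takes a summable sequence $\{\delta_n\}$, builds the sets $M_k$ to contain all relevant products of earlier $\sigma_i,\tau_i$, and interleaves the pairs into a single sequence $\{\alpha_k\}$ whose increments are controlled in $d_H$ by the $\delta_k$ \emph{for both members of each pair at once}; the a.a.\ property is then applied a single time to the resulting Cauchy-type net, and the contradiction comes from a telescoping estimate. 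That uniform bookkeeping is exactly what your soft net argument cannot supply.
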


\begin{proof}
Let $\Sigma$ be the set of continuous pseudo-metrics on $X$ (cf.~\cite[Theorems~6.19 and 6.29]{Kel}), which generates the uniformity $\mathscr{U}_X$ (cf.~\cite[Theorem~6.15]{Kel}). Assume the contrary; and then for some finite set $N\subset\langle T^{-1}\circ T\rangle$, some $\varepsilon>0$ and some $d\in\Sigma$ and every superset $M$ of $N$ and all $\delta>0$, there must exist $\sigma,\tau\in T\cap C_{\delta,d}(M,x)$ with $\sigma^{-1}\tau\not\in C_{\varepsilon,d}(N,x)$. (Note that given any $\alpha\in\mathscr{U}_X$, there are a $d\in\Sigma$ and an $\epsilon>0$ such that $C_{\epsilon,d}(N,x)\subseteq C_\alpha(N,x)$.)

Given any sequence $\{A_n\}$ of compact subsets of $X$, for simplicity, write $d_H\textrm{-}\lim A_n=K$ for $A_n\to K$ in the sense of the Hausdorff pseudo-metric $d_H$ based on $d$ on $X$.

Choose a sequence of positive numbers $\{\delta_n\}$ decreasing to $0$ so fast that $\sum_{n=1}^\infty\delta_n<\infty$. Let an increasing sequence $\{M_n\}$ of finite supersets of $N$ together with a sequence $(\sigma_n,\tau_n)$ of pairs of elements of $T$ be chosen as follows:

Let $L_1=N\cup\{e\}$ and we set $M_1=L_1\cup L_1^{-1}$. Then $M_1$ is a finite superset of $N$, and so by the assumption there exists $(\sigma,\tau)=(\sigma_1,\tau_1)$ with $\sigma,\tau\in T\cap C_{\delta_1,d}(M_1,x)$ but $\sigma^{-1}\tau\not\in C_{\varepsilon,d}(N,x)$. Having chosen $M_1,\dotsc,M_k$ and $(\sigma_1,\tau_1), \dotsc, (\sigma_k,\tau_k)$ we set $L_{k+1}=M_kM_kN_k$ where $N_k=\{e,\sigma_k,\tau_k,\sigma_k^{-1}\tau_k\}$, and define $M_{k+1}=L_{k+1}\cup L_{k+1}^{-1}$. $M_{k+1}$ is a finite superset of $M_k$, and again by the assumption there exists a pair $(\sigma,\tau)=(\sigma_{k+1},\tau_{k+1})$ such that $\sigma_{k+1},\tau_{k+1}$ both are in $T\cap C_{\delta_{k+1},d}(M_{k+1},x)$ but $\sigma_{k+1}^{-1}\tau_{k+1}\not\in C_{\varepsilon,d}(N,x)$. The construction then proceeds by induction.

Let $T_0=\bigcup_{k=1}^\infty M_k$. If $s,t\in T_0$, then $s,t\in M_k$ and so $t^{-1}\in M_k$ for $k$ sufficiently large implying by construction of $\{M_n\}$ that $st^{-1}\in M_{k+1}\subset T_0$. Thus $T_0$ is a subgroup of $\langle T^{-1}\circ T\rangle$.

A sequence $\{\alpha_k\}$ of elements of $T$ is now defined as follows:
Let $\alpha_1=\tau_1$ and $\alpha_2=\sigma_1$, $\alpha_3=\tau_1\tau_2$ and $\alpha_4=\alpha_1\sigma_2$; for every $k\ge2$ define
\begin{gather*}
\alpha_{2k+1}=\tau_1\tau_2\dotsm\tau_{k+1}\quad \textrm{and}\quad \alpha_{2k+2}=\alpha_{2k-1}\sigma_{k+1}.
\end{gather*}
If $s\in M_k$, then $s\alpha_{2k-1}\in M_{k+1}$, since $s, \tau_1,\dotsc,\tau_{k-1}\in M_k$ and $\tau_k\in N_k$. We have by the triangle inequality of the pseudo-metric $d$
\begin{equation*}
d_H(s\alpha_{2k+1}x,s\alpha_{2k+2}x)\le d_H(s\alpha_{2k-1}\tau_{k+1}x,s\alpha_{2k-1}x)+d_H(s\alpha_{2k-1}x,s\alpha_{2k-1}\sigma_{k+1}x)\le2\delta_{k+1}.
\end{equation*}
Therefore, if we can show that $d_H\textrm{-}\lim\limits_{k\to\infty}s\alpha_{2k+1}x=sy$ exists for $s\in T_0$, then so will $d_H\textrm{-}\lim\limits_{k\to\infty}s\alpha_{k}x=sy$ hold for $s\in T_0$. Now if $s\in M_k$ and $k<j$, then $s\alpha_{2j+1}\in M_{j+2}$. Hence by triangle inequality
\begin{equation*}\begin{split}
d_H(s\alpha_{2k+1}x,s\alpha_{2j+1}x)&\le\sum_{i=0}^{j-k-1}d_H(s\alpha_{2(k+i)+1}x,s\alpha_{2(k+i+1)+1}x)\\
&=\sum_{i=0}^{j-k-1}d_H(s\alpha_{2(k+i)+1}x,s\alpha_{2(k+i)+1}\tau_{k+i+2}x)\\
&\le\sum_{i=0}^{j-k-1}\delta_{k+i+2}
\end{split}\end{equation*}
tends to $0$ as $k\to+\infty$. Therefore by exploiting a subnet of $\{\alpha_{2k+1}x\}$ convergent in the topology of $\mathscr{U}_X$, there exists some $y\in X$ such that
$$
d_H\textrm{-}\lim_{k\to\infty}s\alpha_kx=sy\quad \forall s\in T_0.
$$
Since $x\in P_{\!aa}(T,X)$ and $X$ is compact $\textrm{T}_2$, hence there exist a subnet $\{j\}$ of the sequence $\{k\}$ and a point $y^\prime\in X$ such that
$$
d(y,y^\prime)=0,\quad \lim_js\alpha_{2j}x=sy^\prime\quad \textrm{and} \quad\lim_js\alpha_{2j}^{-1}y^\prime=sx\quad \forall s\in N.
$$
(Since $d$ is only a pseudo-metric on $X$, there is no $y=y^\prime$ in general!) Thus we can choose $k>j$ so big that
\begin{equation}\label{eq1.1}
\max_{s\in N}d_H\left(s\alpha_{2j}^{-1}\alpha_{2k+1}x,sx\right)<\frac{\varepsilon}{2}.
\end{equation}
Now $\alpha_{2j}^{-1}=\sigma_j^{-1}\alpha_{2(j-1)-1}^{-1}=\sigma_j^{-1}\tau_{j-1}^{-1}\dotsm\tau_1^{-1}$ and $\alpha_{2k+1}=\tau_1\dotsm\tau_{k+1}$. Therefore $\sigma_j^{-1}\tau_j\dotsm\tau_{k+1}\in T^{-1}\circ T$ and by $k>j$ we have $\alpha_{2j}^{-1}\alpha_{2k+1}(x)=\sigma_j^{-1}\tau_j\dotsm\tau_{k+1}(x)$ because $(T,X)$ is 1-1 at every a.a. point and $tx\in P_{\!aa}(T,X)$ for $t\in T$. If $s$ belongs to $N$, then
$$
s\sigma_j^{-1}\tau_j\in M_{j+1},\quad s\sigma_j^{-1}\tau_j\tau_{j+1}\in M_{j+2},\quad \dotsc,\quad s\sigma_j^{-1}\tau_j\tau_{j+1}\dotsm\tau_k\in M_{k+1}.
$$
Hence
\begin{equation}\label{eq1.2}
\begin{split}
\max_{s\in N}d_H\left(s\alpha_{2j}^{-1}\alpha_{2k+1}x,s\sigma_j^{-1}\tau_jx\right)&\le\max_{s\in N}\sum_{i=0}^{k-j}d_H\left(s\sigma_j^{-1}\tau_j\dotsm\tau_{j+i}x,s\sigma_j^{-1}\tau_j\dotsm\tau_{j+i+1}x\right)
\le\sum_{i=0}^{k-j}\delta_{j+i+1}.
\end{split}
\end{equation}
Let $j$ be chosen so large that $\sum_{n\ge j}\delta_n<\frac{\varepsilon}{2}$ which is possible since $\sum_{n=1}^\infty\delta_n<\infty$. Then by (\ref{eq1.1}), (\ref{eq1.2}) and the triangle inequality of $d_H$,
$$
d_H\left(s\sigma_j^{-1}\tau_jx,sx\right)<\varepsilon\quad \forall s\in N,
$$
and this inequality contradicts that $(\sigma_j,\tau_j)$ was so chosen that $\sigma_j^{-1}\tau_j\not\in C_{\varepsilon,d}(N,x)$. This thus completes the proof of Theorem~\ref{1.10}.
\end{proof}

\begin{cor}\label{1.11}
Let $(T,X)$ be a flow with phase group $T$ and $x\in P_{\!aa}(T,X)$. Given $\varepsilon\in\mathscr{U}_X$, a finite set $N\subset T$ and an integer $n>0$, there exists a finite superset $M$ of $N$ and a $\delta\in\mathscr{U}_X$ such that if $\tau_1,\dotsc,\tau_n\in C_\delta(M,x)$, then $\tau_1^{\epsilon_1}\dotsm\tau_n^{\epsilon_n}\in C_\varepsilon(N,x)$ for all choice $\epsilon_i=0,+1$ or $-1$.
\end{cor}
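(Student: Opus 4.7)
Since $(T,X)$ is a flow, $T$ is a group, $\langle T^{-1}\circ T\rangle=T$, and for each $t\in T$ the set-valued inverse reduces to the single-valued continuous map $t^{-1}[x]=\{t^{-1}x\}$. Hence Theorem~\ref{1.10} applies to $(T,X)$, and the plan is to iterate it in order to pass from control of a single ratio $\sigma^{-1}\tau$ to control of an arbitrary signed word $\tau_1^{\epsilon_1}\cdots\tau_n^{\epsilon_n}$.

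Starting from $(\varepsilon_0,N_0):=(\varepsilon,N)$, I would build a descending ``tower'' $(\varepsilon_k,N_k)_{k\ge 0}$ by repeatedly invoking Theorem~\ref{1.10}: given $(\varepsilon_k,N_k)$, produce a finite $N_{k+1}\supseteq N_k$ and $\varepsilon_{k+1}\in\mathscr{U}_X$ such that
\[
\sigma^{-1}\tau\in C_{\varepsilon_k}(N_k,x)\quad\forall\,\sigma,\tau\in T\cap C_{\varepsilon_{k+1}}(N_{k+1},x).
\]
Let us agree to call $t\in T$ \emph{of level $\ge k$} when $t\in C_{\varepsilon_k}(N_k,x)$. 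Note that $e$ lies in every $C_\varepsilon(N,x)$ and is therefore of infinite level.

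Two auxiliary facts I would then isolate. \emph{(a)} If $\tau$ is of level $\ge k+1$, then $\tau^{\epsilon}$ is of level $\ge k$ for every $\epsilon\in\{0,+1,-1\}$: indeed $\tau^0=e$, $\tau^{+1}=e^{-1}\tau$ and $\tau^{-1}=\tau^{-1}e$, each case falling out of the displayed property with one of $\sigma,\tau$ chosen to be $e$. \emph{(b)} If $\alpha$ is of level $\ge k+2$ and $\beta$ is of level $\ge k+1$, then $\alpha\beta$ is of level $\ge k$: by (a) applied to $\alpha$, $\alpha^{-1}$ is of level $\ge k+1$, and the displayed property with $(\sigma,\tau)=(\alpha^{-1},\beta)$ yields $\alpha\beta=(\alpha^{-1})^{-1}\beta\in C_{\varepsilon_k}(N_k,x)$.

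A short induction on $n$ using (a) and (b) then shows that whenever every $\tau_i$ is of level $\ge 2n$, the signed word $\tau_1^{\epsilon_1}\cdots\tau_n^{\epsilon_n}$ is of level $\ge 0$, i.e.\ lies in $C_\varepsilon(N,x)$, for every choice of $\epsilon_i\in\{0,+1,-1\}$. Setting $M:=N_{2n}$ and $\delta:=\varepsilon_{2n}$ therefore yields Corollary~\ref{1.11}. The whole substantive content sits in Theorem~\ref{1.10}; what remains is pure bookkeeping, the only mild subtlety being the asymmetric two-for-one cost in (b), which forces a bound that is linear in $n$ on the required level of the $\tau_i$.
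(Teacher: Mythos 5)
Your proposal is correct and follows essentially the same route as the paper's proof: both iterate Theorem~\ref{1.10}, use $e\in C_\delta(M,x)$ to handle the exponents $0$ and $-1$, and peel off the last letter via the double-inverse trick $\gamma=\left((\gamma')^{-1}\right)^{-1}\gamma''$, the paper merely packaging your explicit tower of levels as a nested induction that invokes Theorem~\ref{1.10} twice per step. The only difference is presentational --- your linear ``level'' bookkeeping with the $2n$-deep tower versus the paper's recursion --- and your two-for-one cost in step (b) is exactly the paper's passage through $C_{\delta_2}(M_2,x)$ and then $C_{\delta_1}(M_1,x)$.
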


\begin{proof}
The proof is by induction on $n$. For $n=1$ select a set $M\supset N$ and $\delta\in\mathscr{U}_X$ by Theorem~\ref{1.10}. If $\tau\in C_\delta(M,x)$, then $\tau^{\epsilon_1}\in C_\varepsilon(N,x)$ for $\epsilon_1=0,1$ or $-1$, since $e\in C_\delta(M,x)$ and $\tau^{-1}e\in C_\varepsilon(N,x)$.

Suppose the corollary holds for some integer $n\ge 1$, and let $M_1\supset N$ and $\delta_1\in\mathscr{U}_X$ be chosen by Theorem~\ref{1.10} so that whenever $\tau,\sigma\in C_{\delta_1}(M_1,x)$, then $\sigma^{-1}\tau\in C_\varepsilon(N,x)$.

Using Theorem~\ref{1.10} once more let $M_2\supset M_1$ and $\delta_2\in\mathscr{U}_X$ be chosen when $\tau,\sigma\in C_{\delta_2}(M_2,x)$, then $\sigma^{-1}\tau\in C_{\delta_1}(M_1,x)$. Note that if $\tau\in C_{\delta_2}(M_2,x)$, then $\tau^{-1}\in C_{\delta_1}(M_1,x)$. By our induction assumption we choose a set $M\supset M_2$ and $\delta\in\mathscr{U}_X$ such that if $\tau_1,\dotsc,\tau_n\in C_\delta(M,x)$, then $\tau_1^{\epsilon_1}\dotsm\tau_n^{\epsilon_n}\in C_{\delta_2}(M_2,x)$ for all choice $\epsilon_j=0,1$ or $-1$. Let $\tau_1,\dotsc,\tau_{n+1}$ be elements of $C_\delta(M,x)$, and suppose $\gamma=\tau_1^{\epsilon_1}\dotsm\tau_{n+1}^{\epsilon_{n+1}}=\gamma^\prime\gamma^{\prime\prime}$ where $\gamma^\prime=\tau_1^{\epsilon_1}\dotsm\tau_n^{\epsilon_n}$ and $\gamma^{\prime}=\tau_{n+1}^{\epsilon_{n+1}}$ with again $\epsilon_j=0,1$ or $-1$. Then both $\gamma^\prime\in C_{\delta_2}(M_2,x)$ and $\gamma^{\prime\prime}\in C_{\delta_2}(M_2,x)$. By our choice of $M_2$ and $\delta_2$, it follows easily that $(\gamma^\prime)^{-1}\in C_{\delta_1}(M_1,x)$, $\gamma^{\prime\prime}\in C_{\delta_1}(M_1,x)$, and finally $\gamma=\left((\gamma^\prime)^{-1}\right)^{-1}\gamma^{\prime\prime}\in C_\varepsilon(N,x)$. The corollary then follows by induction.
\end{proof}

Therefore, when $(T,X)$ is a flow and $x\in P_{\!aa}(T,X)$, then for every neighborhood $U$ of $x$ and $n\ge2$ there are discretely syndetic sets $A_1,\dotsc,A_n$ in $T$ such that $A_1\dotsm A_nx\subset U$.
\section{Bohr almost automorphy of semiflows}\label{sec2}
In this section we will mainly consider almost automorphy from Bohr's viewpoint of recurrence. First we will introduce the basic notion\,---\,Bohr a.a. point of surjective semiflows. The principal results of the discussion are Theorems \ref{thm2.2} and \ref{thm2.5}. Theorem~\ref{thm2.2} asserts that the equivalence of discrete Bohr almost automorphy with Bochner almost automorphy. This also follows that a discretely periodic point of any semiflow is an a.a. point. Particularly, Theorem~\ref{thm2.5} says that an a.a. point is an l.a.p. point if our phase semigroup $T$ is a group or if $T$ is an abelian semigroup.
\begin{sh}
Let $(T,X)$ be a surjective semiflow in this section unless specified otherwise.
\end{sh}
\subsection{Bohr a.a. vs Bochner a.a.}
\begin{sn}\label{def2.1}
Let $(T,X)$ be any semiflow not necessarily surjective. Then:
\begin{enumerate}
\item An $x\in X$ shall be called a \textit{Bohr a.a. point} of $(T,X)$ if for all $\varepsilon\in\mathscr{U}_X$, there is a subset $B=B_T(x,\varepsilon)$ of $T$ such that:
\begin{enumerate}
\item[i)] $B$ is syndetic in $T$.
\item[ii)] If $t_1,t_2\in B$, then $t_2^{-1}[t_1x]\subset\varepsilon[x]$; i.e., $B^{-1}Bx\subseteq\varepsilon[x]$.
\end{enumerate}
\item If here $B$ is discretely syndetic in $T$, then $x$ will be called a \textit{discrete Bohr a.a. point} of $(T,X)$.
\end{enumerate}
\end{sn}

See \cite[Definition~2.1.2]{V65} for Bohr a.a. functions on a discrete group, which requires in addition $B^{-1}=B$. Property ii) is a kind of ``two syndetic sets'' condition. However, even if $T$ is a topological group, $B^{-1}$ need not be syndetic in $T$ in the sense of Definition~\ref{sn0.2}.

\begin{thm}\label{thm2.2}
Let $(T,X)$ be such that $x\mapsto t^{-1}[x]$ is continuous for $t\in T$ and let $x_0\in X$. Then:
\begin{enumerate}
\item[$(1)$] The $x_0$ is a discrete Bohr a.a. point if and only if  $x_0\in P_{\!aa}(T,X)$.

\item[$(2)$] If $(T,X)$ is a flow, then $x_0\in P_{\!aa}(T,X)$ iff for all $\varepsilon\in\mathscr{U}_X$ there is a subset $B=B_T(x_0,\varepsilon)$ of $T$ such that
\begin{enumerate}
\item[$\mathrm{i)}$] $B$ is discretely syndetic in $T$,
\item[$\mathrm{ii)}$] $B^{-1}Bx_0\subseteq\varepsilon[x_0]$,
\item[$\mathrm{iii)}$] $B=B^{-1}$.
\end{enumerate}
\end{enumerate}
\end{thm}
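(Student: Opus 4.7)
The plan is to derive both directions of part~(1) from tools already developed in Section~\ref{sec1}, and then to obtain part~(2) by applying the symmetric refinement Corollary~\ref{1.11} in place of Theorem~\ref{1.10}.

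For the ``$\Leftarrow$'' direction of (1), I would fix $\varepsilon\in\mathscr{U}_X$ and invoke Theorem~\ref{1.10} with $N=\{e\}$ to produce $\delta\in\mathscr{U}_X$ and a finite superset $M$ of $\{e\}$ such that every $\sigma,\tau\in T\cap C_\delta(M,x_0)$ satisfies $\sigma^{-1}\tau\in C_\varepsilon(\{e\},x_0)=N_{T^{-1}\circ T}(x_0,\varepsilon[x_0])$. Setting $B:=T\cap C_\delta(M,x_0)$ then gives condition~(ii) of Definition~\ref{def2.1} verbatim. For the discrete syndeticity of $B$ (condition~(i)), I would use the hypothesis that $x\mapsto t^{-1}[x]$ is continuous for every $t\in T$, which ensures that each $s\in M\subseteq\langle T^{-1}\circ T\rangle$ acts on $X$ as a continuous set-valued map; hence one can find a single neighborhood $V$ of $x_0$ for which $tx_0\in V$ already forces $stx_0\subseteq\delta[sx_0]$ and $sx_0\subseteq\delta[stx_0]$ simultaneously for every $s\in M$. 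This gives $B\supseteq N_T(x_0,V)$, and Corollary~\ref{cor1.8A} shows the latter is discretely syndetic in $T$.

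For the ``$\Rightarrow$'' direction of (1), suppose $\{t_n\}$ is a net with $t_nx_0\to y$, $x_n'\to x'$ and $t_nx_n'=y$, and fix $\varepsilon\in\mathscr{U}_X$. I would pick $\varepsilon'\in\mathscr{U}_X$ with $\overline{\varepsilon'[x_0]}\subseteq\varepsilon[x_0]$, let $B=B_T(x_0,\varepsilon')$ be furnished by the discrete Bohr a.a.\ hypothesis, and choose a \emph{finite} $K\subset T$ with $Kt\cap B\neq\emptyset$ for every $t\in T$. Passing to a subnet, I may assume $kt_n\in B$ for a fixed $k\in K$; then $kt_nx_0\to ky$ and $kt_nx_n'=ky$. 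The two-syndetic-sets condition yields $(kt_n)^{-1}[kt_mx_0]\subseteq\varepsilon'[x_0]$ for all indices $m$ in the subnet, while the assumed Hausdorff-continuity of the set-valued map $(kt_n)^{-1}$ gives $(kt_n)^{-1}[kt_mx_0]\to(kt_n)^{-1}[ky]$ as $m$ varies. Consequently $(kt_n)^{-1}[ky]\subseteq\overline{\varepsilon'[x_0]}\subseteq\varepsilon[x_0]$; since $x_n'\in(kt_n)^{-1}[ky]$ this forces $x_n'\in\varepsilon[x_0]$ eventually, so $x'\in\overline{\varepsilon[x_0]}$, and arbitrariness of $\varepsilon$ together with the $\mathrm{T}_2$ hypothesis gives $x'=x_0$.

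Part~(2) then follows the same pattern, but in a flow $T$ is a group and Corollary~\ref{1.11} supplies $\delta$ and $M$ with the stronger conclusion $\tau_1^{\epsilon_1}\tau_2^{\epsilon_2}\in C_\varepsilon(\{e\},x_0)$ for every $\tau_1,\tau_2\in C_\delta(M,x_0)$ and every $\epsilon_i\in\{0,\pm1\}$. Taking $B=(T\cap C_\delta(M,x_0))\cup(T\cap C_\delta(M,x_0))^{-1}$ produces a symmetric set that is still discretely syndetic by the argument of the second paragraph, and the requirement $B^{-1}Bx_0\subseteq\varepsilon[x_0]$ reduces to a short four-case check using Corollary~\ref{1.11}; the converse half of (2) is a direct specialization of part~(1). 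I expect the main technical obstacle to be the syndeticity step, because elements of $M\subseteq\langle T^{-1}\circ T\rangle$ are in general not elements of $T$, so joint continuity of the phase map cannot be applied to them directly; it is precisely the continuity hypothesis on $x\mapsto t^{-1}[x]$ that lets one propagate continuity through the compositions defining $M$ and thus reduce everything to Corollary~\ref{cor1.8A}.
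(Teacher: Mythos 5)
Your proposal is correct and follows essentially the same route as the paper: Theorem~\ref{1.10} (resp.\ Corollary~\ref{1.11}) with $N=\{e\}$ and $B=T\cap C_\delta(M,x_0)$ for the direction $P_{\!aa}\Rightarrow$ Bohr, and a pigeonhole passage to a subnet with $kt_n\in B$ combined with the Hausdorff continuity of $t\mapsto t^{-1}[\cdot]$ for the converse. Your reduction of the syndeticity of $T\cap C_\delta(M,x_0)$ to $N_T(x_0,V)$ via Corollary~\ref{cor1.8A} is in fact spelled out more explicitly than in the paper, which simply cites Lemma~\ref{1.5}, but it is the same underlying argument.
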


\begin{note}
Property (2) implies that every regular a.p. point of a flow is an a.a. point. Here $x_0$ is called a regular a.p. point if and only if $N_T(x_0,U)$ contains a syndetic subgroup of $T$ for all neighborhood $U$ of $x_0$.
\end{note}

\begin{proof}
(1): Let $x_0$ be a discrete Bohr a.a. point of $(T,X)$ and $\{t_n\}$ a net in $T$. Since $X$ is compact $\textrm{T}_2$, there is no loss of generality in assuming $t_nx_0\to y$, $x_n^\prime\to x_0^\prime$ and $t_nx_n^\prime=y$. To prove that $x_0\in P_{\!aa}(T,X)$, it is sufficient to show $x_0^\prime=x_0$. Assume the contrary. Then there is some $\varepsilon\in\mathscr{U}_X$ such that $(x_0,x_0^\prime)\not\in\varepsilon\circ\varepsilon\circ\varepsilon$. Now choose a set $B=B_T(x_0,\varepsilon)$ satisfying Definition~\ref{def2.1}. Since $B$ is syndetic in $T$, there exist elements $s_1,\dotsc,s_m$ of $T$ such that each $t\in T$ may be written $s_jt=\tau$ where $\tau\in B$ and $1\le j\le m$. For each $t_n$ we can write $s_jt_n=\tau_n$ where $j=j(n)$. There are but finitely many $s_j$, so there will exist a subnet $\{\beta_k\}$ of $\{t_n\}$ such that $s_{j_0}\beta_k=\tau_k$ where $j_0$ is independent of $k$. It remains true for $\{\beta_k\}$ that $\beta_kx_0\to y$ and $\beta_kx_k^\prime=y$. Then let $k$ then $i>k$ be chosen so large that
\begin{gather*}
\varepsilon[x_0^\prime]\cap\beta_k^{-1}[\beta_ix_0]\not=\emptyset\quad \textrm{and}\quad  \beta_k^{-1}[\beta_ix_0]\subseteq\beta_k^{-1}s_{j_0}^{-1}[s_{j_0}\beta_ix_0]=\tau_k^{-1}[\tau_ix_0].
\end{gather*}
Further by condition ii) of Definition~\ref{def2.1}, we can conclude that $(x_0,x_0^\prime)\in\varepsilon\circ\varepsilon\circ\varepsilon$, which is a contradiction. Therefore $x_0\in P_{\!aa}(T,X)$ as was to be proved.

Conversely suppose that $x_0\in P_{\!aa}(T,X)$. By Definition~\ref{sn0.9}, $x_0\in P_{\!aa}(T,X)$ in the sense of the discrete $T$. Given $\varepsilon\in\mathscr{U}_X$ choose a finite superset $M$ of $N=\{e\}$ and $\delta\in\mathscr{U}_X$ as in Theorem~\ref{1.10}. Define $B=T\cap C_\delta(M,x_0)$ which is discretely syndetic in $T$ by Lemma~\ref{1.5}. If $t_1,t_2\in B$ then $t_2^{-1}t_1\in C_\varepsilon(\{e\},x_0)$ using Theorem~\ref{1.10}. Thus $x_0$ enjoys properties i) and ii) of Definition~\ref{def2.1}. Thus $x_0$ is discrete Bohr a.a. point of $(T,X)$.

(2): We only need to show the ``only if'' part because of (1). For this, let $x_0\in P_{\!aa}(T,X)$. Applying Corollary~\ref{1.11} with $N=\{e\}$ and $n=2$ and then having set $B=C_\delta(M,x_0)\cup (C_\delta(M,x_0))^{-1}$, $B$ satisfies conditions i), ii) and iii). Thus (2) of Theorem~\ref{thm2.2} holds.
The proof is complete.
\end{proof}

\begin{cor}[Pushing-out of almost automorphy]
$\,$
\begin{enumerate}
\item Let $\pi\colon(T,X)\rightarrow(T,Y)$ be an epimorphism of two invertible semiflows. If $x\in P_{\!aa}(T,X)$, then $\pi(x)\in P_{\!aa}(T,Y)$.
\item Every invertible factor of an a.a. semiflow is also an a.a. semiflow.
\end{enumerate}
\end{cor}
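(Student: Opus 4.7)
The plan is to verify Definition~\ref{sn0.9} for $\pi(x)$ directly, lifting each ingredient along $\pi$ so as to invoke the almost automorphy of $x$. Suppose $\{t_n\}$ is a net in $T$ with $t_n\pi(x)\to y^*$, $y_n'\to y'$ and $t_n y_n'=y^*$ in $Y$; the goal is to show $\pi(x)=y'$.

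First, by compactness of $X$ and passing to a subnet, I may assume $t_n x\to z$ for some $z\in X$. Equivariance and continuity of $\pi$ give $\pi(z)=y^*$. Second, since $(T,X)$ is invertible, each $t_n\colon X\to X$ is a bijection, so I can set $x_n'':=t_n^{-1}z\in X$, which by construction satisfies $t_n x_n''=z$. A further subnet, using compactness again, yields $x_n''\to x''$ for some $x''\in X$. Third, I compute $\pi(x_n'')=t_n^{-1}\pi(z)=t_n^{-1}y^*$; and using invertibility of $(T,Y)$, the equation $t_n y_n'=y^*$ forces $y_n'=t_n^{-1}y^*$, so $\pi(x_n'')=y_n'\to y'$ and hence $\pi(x'')=y'$.

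At this point the data $t_n x\to z$, $x_n''\to x''$, $t_n x_n''=z$ is exactly what Definition~\ref{sn0.9} demands of $x\in P_{\!aa}(T,X)$, so $x=x''$ and therefore $\pi(x)=\pi(x'')=y'$. This proves (1). Part (2) then follows at once: if $(T,X)$ is an a.a. semiflow with witness $x$, then (1) gives $\pi(x)\in P_{\!aa}(T,Y)$, and continuity together with surjectivity of $\pi$ yield $\mathrm{cls}_Y T\pi(x)\supseteq\pi(\mathrm{cls}_X Tx)=\pi(X)=Y$, making $(T,Y)$ an a.a. semiflow.

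The only delicate step is the lifting $x_n'':=t_n^{-1}z$, and invertibility of both $(T,X)$ and $(T,Y)$ is used essentially here: the former to produce a net in $X$ whose forward $t_n$-image is identically $z$, and the latter to identify the $\pi$-image of this net with the originally given net $\{y_n'\}$. No appeal to Theorem~\ref{thm2.2} is needed, although alternatively one could derive (1) by transporting a discretely syndetic set $B=B_T(x,\varepsilon_X)$ satisfying $B^{-1}Bx\subseteq\varepsilon_X[x]$ through a uniformly chosen $\varepsilon_X$ with $(\pi\times\pi)(\varepsilon_X)\subseteq\varepsilon_Y$, using the identity $\pi(t_2^{-1}t_1 x)=t_2^{-1}t_1\pi(x)$ which holds because $\pi$ is equivariant and both semiflows are invertible.
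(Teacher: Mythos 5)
Your argument is correct. The lifting step is sound: for an invertible semiflow each $t_n$ is a continuous bijection of a compact Hausdorff space, hence a homeomorphism, so $x_n'':=t_n^{-1}z$ is a well-defined net, and invertibility of $(T,Y)$ together with equivariance indeed forces $\pi(x_n'')=t_n^{-1}y^*=y_n'$. Passing to subnets is harmless because the conclusion $\pi(x)=y'$ concerns only the limits, which subnets preserve. Part (2) is also fine.

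The paper states this corollary immediately after Theorem~\ref{thm2.2} and offers no written proof; the intended derivation is evidently the second route you sketch at the end, namely transporting a discretely syndetic set $B$ with $B^{-1}Bx\subseteq\varepsilon_X[x]$ through $\pi$ (choosing $\varepsilon_X$ with $(\pi\times\pi)(\varepsilon_X)\subseteq\varepsilon_Y$) and then invoking the Bohr-a.a.\ characterization of Theorem~\ref{thm2.2}(1). Your primary argument is genuinely different and, I would say, preferable here: it works directly from Definition~\ref{sn0.9} and needs nothing beyond compactness, Hausdorffness, equivariance, and invertibility, whereas the Theorem~\ref{thm2.2} route ultimately rests on the nontrivial Theorem~\ref{1.10}. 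What the paper's route buys is uniformity of exposition (everything funnelled through the Bohr characterization); what yours buys is self-containedness and a cleaner logical dependency graph.
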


\begin{que}[{Lifting of a.a. points; cf.~Sell-Shen-Yi~1998~\cite{SSY}}]
Let $\pi\colon(T,X)\rightarrow(T,Y)$ be an epimorphism of minimal flows and $N$ an integer with $N\ge2$. There are two open questions on a.a. points, which are of general interests.
\begin{enumerate}
\item {\it If $\pi$ is of almost $N$-to-1 type and $(T,Y)$ is equicontinuous, when is $(T,X)$ an a.a. flow?}
\item {\it If $\pi$ is of $N$-to-1 type and $(T,Y)$ is an a.a. flow, when is $(T,X)$ an a.a. flow?}
\end{enumerate}
According to Ellis' ``two-circle'' minimal set, to obtain positive solutions to these questions, we need to add conditions on the phase group $T$ or the phase space $X$.
\end{que}

Recall following Definition~\ref{sn0.3} that $x$ is l.a.p. for $(T,X)$ if and only if for every neighborhood $U$ of $x$ there is a syndetic set $A$ of $T$ and a neighborhood $V$ of $x$ such that $AV\subset U$.

Based on Veech's structure theorem any a.a. point is l.a.p. in the flows. However, by independent approaches, the following theorem asserts that an a.a. point is an l.a.p. point in many important situations.

\begin{thm}\label{thm2.5}
Let $(T,X)$ be minimal invertible such that $T$ is a group or an abelian semigroup. If $x\in P_{\!aa}(T,X)$, then $x$ is an l.a.p. distal point of $(T,X)$ in the sense of discrete $T$.
\end{thm}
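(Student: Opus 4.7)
Since $x\in P_{\!aa}(T,X)$, the distal conclusion is immediate from Lemma~\ref{1.8}, so the content is to prove $x$ is an l.a.p. point. Following the sketch in the introduction, the strategy is first to establish a ``two syndetic sets'' condition on the orbit of $x$ and then to convert this orbit-level algebraic information into the topological statement required by Definition~\ref{sn0.3}: for every neighborhood $U$ of $x$ there exist a discretely syndetic $A\subset T$ and a neighborhood $V$ of $x$ with $AV\subset U$.

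Given $U\ni x$, I would pick $\varepsilon\in\mathscr{U}_X$ with $\varepsilon\circ\varepsilon\circ\varepsilon[x]\subset U$ and apply Theorem~\ref{1.10} (respectively Corollary~\ref{1.11} when $T$ is a group) with $N=\{e\}$ to obtain a $\delta\in\mathscr{U}_X$ and a finite $M\ni e$ such that $A:=T\cap C_\delta(M,x)$ is discretely syndetic in $T$ by Corollary~\ref{cor1.8A} and satisfies $\sigma^{-1}\tau\in C_\varepsilon(\{e\},x)$ for all $\sigma,\tau\in A$. When $T$ is a group, Corollary~\ref{1.11} additionally yields the symmetric form $AA^{-1}x\cup A^{-1}Ax\subset\varepsilon[x]$ by running $n=2$ with all choices of signs, delivering the classical two syndetic sets condition directly. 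When $T$ is only an abelian semigroup the elements $\sigma^{-1}\tau$ lie a~priori in $\langle T^{-1}\circ T\rangle$ rather than in $T$; commutativity then lets me rewrite $\sigma^{-1}[\tau x]=\tau[\sigma^{-1}x]$ and treat $A^{-1}$, considered as a set of set-valued maps, as if it were right-syndetic, delivering the ``weak'' two syndetic sets condition $A^{-1}Ax\subset\varepsilon[x]$ in the set-valued sense.

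The hard part is the conversion of this orbit-level estimate into the neighborhood-level statement $AV\subset U$. Here I would follow the continuous pseudo-metric technique used in the proof of Theorem~\ref{1.10}: choose a continuous pseudo-metric $d$ on $X$ whose induced balls refine $\varepsilon$, exploit the joint continuity of the action together with compactness of $X$ to patch local $d$-estimates, and produce a uniform $\delta^\prime\in\mathscr{U}_X$ for which $V:=\delta^\prime[x]$ satisfies $AV\subset U$. Minimality of $(T,X)$ is used to guarantee that the orbit $Tx$ is dense enough near $x$ for the orbit-estimate to propagate, while invertibility ensures that $\sigma^{-1}$ is a genuine homeomorphism in the flow case and a well-behaved upper semi-continuous set-valued map in the abelian case. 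I expect the genuine difficulty to lie in the abelian semigroup case, where the inverses $\sigma^{-1}[\cdot]$ are set-valued and the neighborhood $V$ must be chosen to work simultaneously over these fibres; this is precisely where the abelianness of $T$ is indispensable, allowing one to commute inverses past forward translates so that right-syndeticity is preserved.
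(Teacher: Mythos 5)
Your reduction to the orbit-level ``two syndetic sets'' condition $B^{-1}Bx\subseteq\varepsilon[x]$ for a discretely syndetic $B$ (via Theorem~\ref{1.10} and Corollary~\ref{1.11}, i.e., essentially Theorem~\ref{thm2.2}), and your appeal to Lemma~\ref{1.8} for distality, both agree with the paper. The genuine gap is in the step you yourself single out as the hard part: converting the orbit-level estimate into $AV\subset U$ for an actual neighborhood $V$ of $x$. You propose to extract the required uniformity from ``joint continuity of the action together with compactness of $X$'' plus the continuous pseudo-metric device of Theorem~\ref{1.10}. That cannot work as stated: joint continuity gives, for each \emph{single} $t\in A$, a neighborhood $V_t$ of $x$ with $tV_t\subset U$, but $A$ is a syndetic, typically non-compact subset of $T$, so there is no way to intersect the $V_t$ over $A$; demanding one $V$ that works for every $t\in A$ is precisely the equicontinuity-type assertion one is trying to prove, and the pseudo-metrics in the proof of Theorem~\ref{1.10} are used there only to run a Cauchy-type construction of the $\alpha_k$, not to manufacture such uniformity.

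The paper's actual mechanism is different and purely topological, and it is the missing idea in your proposal. In the abelian case, from $B^{-1}Bx=BB^{-1}x\subset U$ one writes $T=K^{-1}B$ for a finite $K$, so $T^{-1}=KB^{-1}$ and $X=\mathrm{cls}_XT^{-1}x=\bigcup_{i=1}^{m}k_i\,\mathrm{cls}_XB^{-1}x$; hence $W:=\mathrm{Int}_X\mathrm{cls}_XB^{-1}x\neq\emptyset$. Minimality gives $s\in T$ with $sx\in W$, and one sets $V=s^{-1}W$ and $A=sB$. Then
$AV=BW\subseteq B\,\mathrm{cls}_XB^{-1}x\subseteq\mathrm{cls}_X\bigl(BB^{-1}x\bigr)\subseteq U$,
using only that each element of $B$ acts continuously: no uniformity over $B$ is needed, because the neighborhood $V$ was manufactured inside a translate of the closure of the orbit set $B^{-1}x$, on which the two-syndetic-sets estimate already holds pointwise. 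The group case is identical with $BBx\subset U$ (the symmetric $B=B^{-1}$ from Theorem~\ref{thm2.2}(2)) and $W=\mathrm{Int}_X\mathrm{cls}_XBx$. Without this finite-covering/nonempty-interior argument your proof does not close.
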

\begin{proof}
Let $x\in P_{\!aa}(T,X)$. Since the almost automorphy still holds under the discrete topology of $T$, we may suppose $T$ is discrete. First by Theorem~\ref{thm2.2}, $x$ is a Bohr a.a. point of $(T,X)$. Now let $U$ be a closed neighborhood of $x$. We need to find a neighborhood $V$ of $x$ and a syndetic set $A$ of $T$ with the property $AV\subseteq U$.

$(a)$: Let $T$ be an abelian semigroup. Then by Definition~\ref{def2.1}, there is a discretely syndetic set $B$ of $T$ such that
$B^{-1}Bx\subset U$ and so $BB^{-1}x\subset U$.
Take a finite subset $K=\{k_1,\dotsc,k_m\}$ of $T$ with $T=K^{-1}B$ (i.e. $\forall t\in T, \exists k\in K$ s.t. $kt\in A$). Therefore $T^{-1}=B^{-1}K=KB^{-1}$.
Since $X=\mathrm{cls}_XTx=\mathrm{cls}_XT^{-1}x$ (cf.~\cite{AD}), $X=\bigcup_{i=1}^mk_i\mathrm{cls}_XB^{-1}x$. Thus $W:=\mathrm{Int}_X\mathrm{cls}_XB^{-1}x\not=\emptyset$. Choose some $s\in T$ with $sx\in W$ and then $x\in s^{-1}W$. Having set $V=s^{-1}W$ and $A=sB$, it holds that
$$
AV=Bss^{-1}W=BW\subseteq B\mathrm{cls}_XB^{-1}x\subseteq\mathrm{cls}_XBB^{-1}x\subseteq U.
$$
Since $A$ is syndetic in $T$ and $U$ is arbitrary, thus $x$ is an l.a.p. point of $(T,X)$.

$(b)$: Let $T$ be a group. Then by (2) of Theorem~\ref{thm2.2}, we can find a discretely syndetic set $B$ of $T$ such that $BBx\subset U$. This implies that $x$ is an l.a.p. point of $(T,X)$ by an argument similar to that of the case $(a)$. Indeed, take a finite set $K\subset T$ with $T=KB$ for $B$ is syndetic. Then $W=\textrm{Int}_XBx\not=\emptyset$ so that $V=s^{-1}W$ is a neighborhood of $x$ for some $s\in T$. Thus for $A=Bs$, $AV=BW\subseteq \textrm{cls}_XBBx\subseteq U$.

Finally, since an a.a. point is always a distal point of $(T,X)$ by Lemma~\ref{1.8}, the proof of Theorem~\ref{thm2.5} is therefore completed.
\end{proof}

Theorem~\ref{thm2.5} is is a generalization to \cite[Theorem~19]{AM} which is for abelian group acting on compact metric spaces by different approaches.

In addition it should be noted that if here $(T,X)$ is not minimal, then this statement need not be true. In fact, every fixed point is an a.a. point but not necessarily an l.a.p. one when $(T,X)$ is not a minimal semiflow. For example, let $T=\mathbb{R}$ and $X=\mathbb{R}\cup\{*\}$ the one-point compactification of $\mathbb{R}$ where $*$ at infinity. Define $(t,x)\mapsto t+x$ then $*$ is an a.a. but not an l.a.p. point for the flow $(T,X)$.

\begin{rem}
In \cite{MW} McMahon and Wu proved that $x\in X$ is an l.a.p. point of a minimal flow iff for every neighborhood $U$ of $x$ there are discretely syndetic sets $A,B$ in $T$ such that $ABx\subset U$. Thus Theorem~\ref{thm2.5} in flows may also follow from Corollary~\ref{1.11}.
\end{rem}

\begin{que}\label{que2.7}
Is an a.a. point of any (invertible) minimal semiflow with nonabelian phase semigroup (e.g. amenable semigroup or almost right C-semigroup) an l.a.p. point?
\end{que}

\begin{que}\label{q2.8}
Let $(T,X)$ be a flow with non-discrete phase group $T$. Does it hold that $x$ is a Bohr a.a. point if and only if $x$ is a discrete Bohr a.a. point?
\end{que}

Let $T$ be a topological group and by $\mathfrak{N}_e$ we denote the system of neighborhoods of $e$ in $T$. In two special cases we can obtain a positive solution to Question~\ref{q2.8} as follows:

\begin{thm}\label{thm2.9}
Let $(T,X)$ be a flow with non-discrete phase group $T$. If $T$ is such that given $U\in\mathfrak{N}_e$ and $B\subset T$ there exists $V\in\mathfrak{N}_e$ with $BV\subseteq UB$, then $x$ is a Bohr a.a. point if and only if $x$ is a discrete Bohr a.a. point.
\end{thm}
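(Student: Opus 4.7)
The direction ``discrete Bohr a.a.\ $\Rightarrow$ Bohr a.a.''\ is immediate since every discretely syndetic subset of $T$ is syndetic; I focus on the converse. The plan is to fix an arbitrary $\varepsilon\in\mathscr{U}_X$ and manufacture a discretely syndetic $B'\subseteq T$ satisfying $(B')^{-1}B'x\subseteq\varepsilon[x]$ by thickening a Bohr witness for a smaller entourage.

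Choose a symmetric $\beta\in\mathscr{U}_X$ with $\beta\circ\beta\subseteq\varepsilon$, invoke the Bohr a.a.\ hypothesis on $x$ at level $\beta$ to obtain a syndetic $B\subseteq T$ with $B^{-1}Bx\subseteq\beta[x]$, and fix a compact $K\subseteq T$ with $T=K^{-1}B$. Since the $T$-action on the compact space $X$ is jointly continuous, the tube lemma applied to the open set $\{(t,z)\in T\times X:(tz,z)\in\beta\}\supseteq\{e\}\times X$ yields a $U\in\mathfrak{N}_e$ with $(uz,z)\in\beta$ for all $u\in U$ and $z\in X$. Next, apply the hypothesis of the theorem to the subset $B^{-1}\subseteq T$ together with $U$ to get $V_0\in\mathfrak{N}_e$ satisfying $B^{-1}V_0\subseteq UB^{-1}$, and by continuity of multiplication at $(e,e)$ pick a symmetric $V\in\mathfrak{N}_e$ with $V^2\subseteq V_0$. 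Finally, using compactness of $K$, select a finite $F\subseteq K$ with $K\subseteq VF$, so that $K^{-1}\subseteq F^{-1}V$.

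Set $B':=VB$. Then $T=K^{-1}B\subseteq F^{-1}VB=F^{-1}B'$ with $F^{-1}$ finite, so $B'$ is discretely syndetic. The Bohr-type bound for $B'$ follows from the algebraic chain
\[
(B')^{-1}B' \;=\; B^{-1}V^{-1}VB \;=\; B^{-1}V^2B \;\subseteq\; B^{-1}V_0B \;\subseteq\; UB^{-1}B,
\]
which upon evaluation at $x$ gives $(B')^{-1}B'x\subseteq U(B^{-1}Bx)\subseteq U\beta[x]\subseteq\beta\circ\beta[x]\subseteq\varepsilon[x]$; the inclusion $U\beta[x]\subseteq\beta\circ\beta[x]$ uses that $(uy,y)\in\beta$ for $u\in U$, together with $(y,x)\in\beta$ for $y\in\beta[x]$. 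Since $\varepsilon$ was arbitrary, $x$ is a discrete Bohr a.a.\ point.

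The principal subtlety I expect is choosing the correct set to feed into the hypothesis. A direct application to $B$ would give $BV\subseteq UB$, which in the computation of $(B')^{-1}B'$ leaves the extra $V^2$ factor trapped between $B^{-1}$ and $B$, where no equicontinuity can control it. The decisive move is to apply the hypothesis instead to $B^{-1}$, producing a factor of $U$ \emph{outside} the product $B^{-1}B$, where the equicontinuity-near-$e$ supplied by the tube lemma can absorb it via the original Bohr estimate $B^{-1}Bx\subseteq\beta[x]$.
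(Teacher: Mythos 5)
Your proof is correct and follows essentially the same route as the paper's: thicken the syndetic Bohr witness $B$ to $VB$ with $V$ a small symmetric neighborhood of $e$, apply the hypothesis to $B^{-1}$ so that the factor $V^{2}$ trapped inside $B^{-1}V^{2}B$ migrates to an outer factor $U$, and absorb $U$ using uniform continuity of the action near $e$ on the compact space $X$ (the paper does this via a compact neighborhood $W\supseteq B^{-1}Bx$ with $UW\subseteq\varepsilon[x]$ rather than your tube-lemma entourage $\beta$ with $\beta\circ\beta\subseteq\varepsilon$, but this is only a cosmetic difference). Your closing remark about why the hypothesis must be fed $B^{-1}$ rather than $B$ is exactly the point the paper uses implicitly.
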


\begin{proof}
Assume $x$ is a Bohr a.a. point of $(T,X)$ and $\varepsilon\in\mathscr{U}_X$. Then there exists a compact neighborhood $W$ of $x$ with $W\subset\varepsilon[x]$ and a syndetic subset $B$ in $T$ such that $B^{-1}Bx\subseteq W$. Further there is some $U\in\mathfrak{N}_e$ with $UB^{-1}Bx\subset\varepsilon[x]$. Thus, there is $V\in\mathfrak{N}_e$ such that $B^{-1}VBx\subset\varepsilon[x]$. Moreover, we can take an $N\in\mathfrak{N}_e$ with $N^{-1}N\subseteq V$, so $(NB)^{-1}(NB)x\subseteq\varepsilon[x]$.
Note that $NB$ is discretely syndetic in $T$. Indeed, let $K$ be the compact subset of $T$ such that $T=KB$. Then there is a finite subset $F=\{k_1,\dotsc, k_n\}$ of $K$ with $FN\supseteq K$ so that $T=F(NB)$.
Thus $x$ is a discrete Bohr a.a. point of $(T,X)$.
Since the other direction is obvious, so the proof of Theorem~\ref{thm2.9} is completed.
\end{proof}

\begin{thm}\label{thm2.10}
Let $(T,X)$ be a flow with non-discrete phase group $T$. If $T$ is such that given $U\in\mathfrak{N}_e$ and $B\subset T$ there exists $V\in\mathfrak{N}_e$ with $VB\subseteq BU$, then $x$ is a Bohr a.a. point if and only if $x$ is a discrete Bohr a.a. point.
\end{thm}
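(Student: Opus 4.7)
The plan is to mirror the proof of Theorem~\ref{thm2.9}, exploiting the fact that after one takes inverses in the topological group $T$, the hypothesis of Theorem~\ref{thm2.10} delivers exactly the commuting property invoked there. The direction from discrete Bohr a.a. to Bohr a.a. is immediate, so I focus on the converse. First I would start from a Bohr a.a. point $x$ and a given $\varepsilon\in\mathscr{U}_X$, apply Definition~\ref{def2.1} to obtain a syndetic $B\subset T$ together with a compact neighborhood $W$ of $x$ with $W\subset\varepsilon[x]$ and $B^{-1}Bx\subseteq W$, and then, using joint continuity of the phase map and compactness of $W$, pick a symmetric $U\in\mathfrak{N}_e$ with $UW\subseteq\varepsilon[x]$; in particular $UB^{-1}Bx\subseteq\varepsilon[x]$.

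Next I would apply the standing hypothesis of Theorem~\ref{thm2.10} to the set $B$ and the neighborhood $U$ to obtain some $V\in\mathfrak{N}_e$ with $VB\subseteq BU$. Taking inverses of both sides gives $B^{-1}V^{-1}\subseteq U^{-1}B^{-1}=UB^{-1}$ (using that $U$ is symmetric), and hence
\begin{equation*}
B^{-1}V^{-1}Bx\subseteq UB^{-1}Bx\subseteq\varepsilon[x].
\end{equation*}

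Finally I would choose a symmetric $N\in\mathfrak{N}_e$ with $NN\subseteq V^{-1}$, possible by continuity of multiplication at $e$. Then $(NB)^{-1}(NB)=B^{-1}N^{-1}NB\subseteq B^{-1}V^{-1}B$, so $(NB)^{-1}(NB)x\subseteq\varepsilon[x]$. To verify that $NB$ is discretely syndetic, pick a compact $K\subset T$ with $T=KB$; since $e\in N$, the family $\{kN:k\in K\}$ is an open cover of $K$, and a finite subcover $K\subseteq FN$ yields $T=KB\subseteq F(NB)$. The only step that requires any thought is the inverse-taking manoeuvre, which is precisely what converts the ``left-to-right'' commuting hypothesis of Theorem~\ref{thm2.10} into the ``right-to-left'' form used in Theorem~\ref{thm2.9}; once that is in hand, the rest is a direct transcription of the earlier argument.
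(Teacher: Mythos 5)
Your argument is correct: every step checks out. Starting from $B^{-1}Bx\subseteq W\subseteq\varepsilon[x]$ with $W$ a compact neighborhood, the choice of a symmetric $U$ with $UW\subseteq\varepsilon[x]$ is legitimate by joint continuity and compactness (exactly as in the proof of Theorem~\ref{thm2.9}); the inversion $VB\subseteq BU\Rightarrow B^{-1}V^{-1}\subseteq U^{-1}B^{-1}=UB^{-1}$ is valid set algebra in the group $T$; a symmetric $N$ with $NN\subseteq V^{-1}$ exists since inversion is a homeomorphism; and the finite-subcover argument for the discrete syndeticity of $NB$ is the standard one. Your route is, however, not the one the paper takes for Theorem~\ref{thm2.10}. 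The paper first proves the closure identity $\mathrm{cls}_T(B^{-1}B)=\bigcap_{N\in\mathfrak{N}_e}(NB)^{-1}NB$ (this is where the hypothesis $VB\subseteq BU$ enters, via $\mathrm{cls}_T(B^{-1}B)=\bigcap_V B^{-1}BV\supseteq\bigcap_W B^{-1}WB$), and then extracts a single $N$ with $(NB)^{-1}(NB)x\subseteq\varepsilon[x]$ from $\bigl(\mathrm{cls}_T(B^{-1}B)\bigr)x\subseteq\mathrm{cls}_X(B^{-1}Bx)$; that last extraction step is left rather implicit in the paper. Your version instead literally transposes the proof of Theorem~\ref{thm2.9} by taking inverses, which makes every constant ($U$, $V$, $N$) explicit, sidesteps the intersection-over-all-$N$ step entirely, and exhibits Theorems~\ref{thm2.9} and~\ref{thm2.10} as mirror images of one another. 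What you lose is the structural identity for $\mathrm{cls}_T(B^{-1}B)$, which has some independent interest; what you gain is a fully self-contained and arguably tighter argument.
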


\begin{proof}
First we will show that $\textrm{cls}_T(B^{-1}B)=\bigcap_{N\in\mathfrak{N}_e}(NB)^{-1}NB$ for $B\subset T$.
Indeed, since for each $V\in\mathfrak{N}_e$ there is some $W\in\mathfrak{N}_e$ with $W^{-1}W\subseteq V$ and $W^{-1}W\in\mathfrak{N}_e$, thus $\bigcap_{W\in\mathfrak{N}_e}(WB)^{-1}WB=\bigcap_{V\in\mathfrak{N}_e}B^{-1}VB$. Moreover, since $B^{-1}B\subset B^{-1}VB$ for all $V\in\mathfrak{N}_e$, $\textrm{cls}_T(B^{-1}B)\subseteq B^{-1}VB$ so that $\textrm{cls}_T(B^{-1}B)\subseteq \bigcap_{V\in\mathfrak{N}_e}B^{-1}VB$. On the other hand, $\textrm{cls}_T(B^{-1}B)= \bigcap_{V\in\mathfrak{N}_e}B^{-1}BV\supseteq\bigcap_{W\in\mathfrak{N}_e}B^{-1}WB$. Thus $\textrm{cls}_T(B^{-1}B)=\bigcap_{W\in\mathfrak{N}_e}(WB)^{-1}WB$.

Let $x$ be a Bohr a.a. point of $(T,X)$ and $\varepsilon\in\mathscr{U}_X$.
Now by $\textrm{cls}_X{B^{-1}B}x\subseteq\textrm{cls}_X{B^{-1}Bx}$ for all $B\subseteq T$, there exist a syndetic set $B$ in $T$ and an $N\in\mathfrak{N}_e$ such that $(NB)^{-1}(NB)x\subseteq\varepsilon[x]$. Since $NB$ is discretely syndetic in $T$, $x$ is a discrete Bohr a.a. point of $(T,X)$.
The other direction is obvious, so the proof of Theorem~\ref{thm2.10} is completed.
\end{proof}

\subsection{Bohr-Veech points}\label{sec2.2}
\begin{sn}\label{sn2.11}
Let $(T,X)$ be any semiflow not necessarily surjective.
\begin{enumerate}
\item A subset $A$ of $T$ is said to be \textit{relatively dense} in $T$ if there is a finite set $F\subseteq T$ such that $Ft\cap A\not=\emptyset$ and $tF\cap A\not=\emptyset$ for all $t\in T$.
\item An $x\in X$ is called a \textit{Bohr-Veech point} of $(T,X)$ if given $\varepsilon\in\mathscr{U}_X$ there is a subset $B$ of $T$ such that:
\begin{enumerate}
\item $B$ is relatively dense in $T$;
\item $B^{-1}Bx\subseteq\varepsilon[x]$.
\end{enumerate}
\end{enumerate}
Clearly, any Bohr-Veech point of an invertible semiflow is an a.a. point by Theorem~\ref{thm2.2}. When $T$ is a discrete abelian semigroup, any Bohr a.a. point is a Bohr-Veech point in the invertible case.
\end{sn}

Given any $\tau\in T$, let $L_\tau\colon t\mapsto\tau t$ be the left translation of $T$. Since $T$ is only a semigroup, $L_\tau T\subsetneqq T$ in general. Thus for $B\subset T$, $L_\tau^{-1}B$ is possibly an empty subset of $T$.

Recall the reflection $(X,T)$ has the phase mapping $(x,t)\mapsto xt=t^{-1}x$ by Definition~\ref{sn0.1}. Then the following theorem is a weak solution to Question~\ref{que2.7}.

\begin{thm}\label{thm2.12}
Let $(T,X)$ be minimal invertible with $x\in X$. If $x$ is a Bohr-Veech point of $(T,X)$, then $x$ is an l.a.p. point of $(X,T)$.
\end{thm}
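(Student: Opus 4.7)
The approach is to shrink the given neighborhood of $x$ to a closed one, extract from the Bohr-Veech hypothesis a relatively dense $B\subseteq T$ that controls $B^{-1}Bx$, and then combine minimality of $(T,X)$ with the two-sided syndeticity of $B$ to simultaneously produce the required neighborhood $V$ of $x$ and a syndetic set $A\subseteq T$ witnessing l.a.p.\ of $(X,T)$ at $x$.

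First I fix a closed neighborhood $U$ of $x$ and pick $\varepsilon\in\mathscr{U}_X$ with $\varepsilon[x]\subseteq U$. By the Bohr-Veech property applied to $\varepsilon$ I obtain $B\subseteq T$ and a finite $F\subseteq T$ with $B^{-1}Bx\subseteq\varepsilon[x]$ and both $Ft\cap B\not=\emptyset$ and $tF\cap B\not=\emptyset$ for every $t\in T$. The left-syndetic half yields $T\subseteq F^{-1}B$ as subsets of $\langle T\rangle$, hence $Tx\subseteq F^{-1}Bx$, and by density of $Tx$ in $X$,
$$X=\mathrm{cls}_X Tx\subseteq{\bigcup}_{f\in F}f^{-1}\mathrm{cls}_X Bx.$$
Since $F$ is finite and $X$ is a compact Hausdorff (hence Baire) space, some $f^{-1}\mathrm{cls}_X Bx$ has nonempty interior; as each $f^{-1}$ is a homeomorphism, $W:=\mathrm{Int}_X\mathrm{cls}_X Bx$ is itself nonempty and open. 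Minimality of $(T,X)$ then gives $s\in T$ with $sx\in W$, and I set $V:=s^{-1}W$, an open neighborhood of $x$.

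Now $sV\subseteq W\subseteq\mathrm{cls}_X Bx$, so by continuity of each $b^{-1}$ and closedness of $U$,
$$B^{-1}sV\subseteq B^{-1}\mathrm{cls}_X Bx\subseteq\mathrm{cls}_X B^{-1}Bx\subseteq\mathrm{cls}_X\varepsilon[x]\subseteq U.$$
To repackage this inclusion as $A^{-1}V\subseteq U$ for some $A\subseteq T$ (rather than for a set of transitions sitting in $\langle T\rangle$), I set $A:=\{t\in T\,|\,st\in B\}$. For $t\in A$, letting $b=st\in B$, one has $(st)^{-1}=t^{-1}s^{-1}=b^{-1}$ so that $t^{-1}=b^{-1}s$ as elements of $\langle T\rangle$; therefore $A^{-1}V\subseteq B^{-1}sV\subseteq U$. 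Applying the right-syndetic half of relative density to $st$ produces $f\in F$ with $(st)f\in B$, i.e.\ $tf\in A$, so $tF\cap A\not=\emptyset$ for every $t\in T$. This is the natural syndeticity for the right-action semiflow $(X,T)$, and since $VA=A^{-1}V\subseteq U$, it follows that $x$ is an l.a.p.\ point of $(X,T)$.

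The principal obstacle is producing an honest syndetic set $A$ sitting inside $T$: the naive candidate $s^{-1}B$ lives in $\langle T\rangle\setminus T$ in general and cannot be used directly, so one must replace it by the preimage $\{t\in T\,|\,st\in B\}$. That this replacement is syndetic relies essentially on $B$ being right-syndetic, which is precisely the strengthening that Bohr-Veech adds on top of Bohr almost automorphy; this is where the extra hypothesis is used in an essential way.
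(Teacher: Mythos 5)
Your proposal is correct and follows essentially the same route as the paper's proof: extract a relatively dense $B$ with $B^{-1}Bx\subseteq\varepsilon[x]$, use the left half of relative density plus Baire category to get $W=\mathrm{Int}_X\mathrm{cls}_XBx\not=\emptyset$, pull back by some $s\in T$ with $sx\in W$, and replace the ill-behaved set $s^{-1}B$ by the preimage $A=\{t\in T\mid st\in B\}$, whose left-syndeticity comes from the right half of relative density. The only (harmless) cosmetic differences are that you absorb the closure of $B^{-1}Bx$ into a closed neighborhood $U$ where the paper uses an entourage $\alpha$ with $\alpha\circ\alpha\circ\alpha\subseteq\varepsilon$, and you take $V=s^{-1}W$ directly where the paper chooses $\delta[x]$ with $\tau(\delta[x])\subseteq W$.
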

\begin{note}
It should be interesting to know whether or not $x$ is an l.a.p. of $(T,X)$ in the setting of Theorem~\ref{thm2.12}.
\end{note}
\begin{proof}
Given $\varepsilon,\alpha\in\mathscr{U}_X$ with $\alpha\circ\alpha\circ\alpha\subset\varepsilon$, let $B$ be a relatively dense set in $T$ such that $B^{-1}Bx\subset\alpha[x]$ following Definition~\ref{sn2.11}. Since $\textrm{Int}_X\textrm{cls}_X{Bx}\not=\emptyset$ by a standard argument, we can take some $\tau\in T$ and some $\delta\in\mathscr{U}_X$ such that $B^{-1}\tau(\delta[x])\subset\varepsilon[x]$; that is, $(\tau^{-1}B)^{-1}(\delta[x])\subset\varepsilon[x]$. Let $B^\prime=L_\tau^{-1}B$, which is a subset of $T$ such that $(\delta[x])B^\prime\subseteq\varepsilon[x]$. To prove the theorem, it is sufficient to show that $B^\prime$ is left syndetic in $T$. Indeed, since $B$ is relatively dense in $T$, there is a finite subset $F$ of $T$ with $B\cap tF\not=\emptyset$ for all $t\in T$. Let $t\in T$ be any given and then set $t^\prime=\tau t$. Since $B\cap t^\prime F\not=\emptyset$, there are $b\in B$ and $f\in F$ with $t^\prime f=b$ so that $\tau tf=b$; i.e., $tf\in L_\tau^{-1}(b)\subset B^\prime$. This shows that $B^\prime\cap tF\not=\emptyset$ for all $t\in T$.
So $B^\prime$ is left syndetic in $T$ and thus $x$ is an l.a.p. point of $(X,T)$. This then proves Theorem~\ref{thm2.12}.
\end{proof}
\section{Veech's structure theorem for a.a. flows}\label{sec3}
This section will be mainly devoted to proving Veech's Structure Theorem for a.a. flows by using Theorem~\ref{thm2.5}. First we will need an important result due to Ellis and Gottschalk.

\begin{lem}[{cf.~\cite[Proposition~5.27]{E69}}]\label{lem3.1}
If $(T,X)$ is an l.a.p. flow with the phase group $T$, then $P(X)=Q(X)$ and moreover $(T,X/P(X))$ is an equicontinuous flow.
\end{lem}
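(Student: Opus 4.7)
The plan is to prove the two assertions in three stages: first $Q(X)\subseteq P(X)$ (so that $P=Q$), next that $P(X)$ is a closed $T$-invariant equivalence relation, and finally that the induced flow on $Y:=X/P(X)$ satisfies $Q(T,Y)=\varDelta_Y$, which on a compact $\textrm{T}_2$ space is the standard characterization of equicontinuity.

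Stage~1 is the heart of the argument. Given $(x,y)\in Q(X)$, the goal is to construct a net $\{t_n\}\subseteq T$ with $\lim_n t_nx=\lim_n t_ny$. I would pass to a minimal closed $T$-invariant subset $M$ of $\textrm{cls}_{X\times X}T(x,y)$ inside the product flow $(T,X\times X)$ and show that $M$ must meet the diagonal $\varDelta_X$: once $(z,z)\in M$ is produced, some net $\{t_n\}$ gives $(t_nx,t_ny)\to(z,z)$ and hence $(x,y)\in P(X)$. Local almost periodicity is invoked to force $M\cap\varDelta_X\neq\emptyset$: at any $(p,q)\in M$ and any prescribed $\alpha\in\mathscr{U}_X$, l.a.p.\ at $p$ and at $q$ furnishes open neighborhoods $V_p\ni p$, $V_q\ni q$ and a common syndetic $A\subseteq T$ (obtained by intersecting the two syndetic sets, as $T$ is a group) with $AV_p\subseteq\alpha[p]$ and $AV_q\subseteq\alpha[q]$; syndeticity in the phase group $T$ yields a compact $K\subseteq T$ with $T=KA$, and joint continuity on the compact set $K$ supplies a uniform estimate. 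This estimate is used to transport the regional proximity information of $(p,q)$ (inherited from that of $(x,y)$) into an element $s\in T$ for which $(sp,sq)$ lies in an arbitrarily small entourage of $\varDelta_X$, and minimality of $M$ then forces $M$ to contain points of $\varDelta_X$.

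Stages~2 and~3 are then comparatively routine. For Stage~2, closedness of $P(X)$ comes free from the identity $P=Q$ (since $Q(X)$ is closed by definition); reflexivity, symmetry and $T$-invariance of $P(X)$ are immediate from the definitions; transitivity, which is the delicate point for general flows, follows from $P=Q$ together with the same syndetic-compactness trick used in Stage~1, now applied to three orbits simultaneously. For Stage~3, the quotient map $\pi\colon X\to Y=X/P(X)$ is open and intertwines the $T$-actions, so $(T,Y)$ inherits local almost periodicity, and Stage~1 applied to $(T,Y)$ gives $P(T,Y)=Q(T,Y)$; by the very definition of the quotient $P(T,Y)=\varDelta_Y$, whence $Q(T,Y)=\varDelta_Y$, which yields equicontinuity.

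The main obstacle will be Stage~1, specifically the claim that a minimal subset of $\textrm{cls}_{X\times X}T(x,y)$ must meet the diagonal. The subtle point is that l.a.p.\ only produces neighborhoods and syndetic sets at single points, whereas collapsing regional proximity to actual proximity requires acting \emph{simultaneously} on both coordinates of $(p,q)$; the cover $T=KA$ coming from syndeticity together with the uniform continuity of the action on the compact part $K$ is what makes this simultaneous control possible, and extra care is needed because $T$ is not assumed abelian, so the factorisation $t=ka$ has to be placed on the correct side in each estimate.
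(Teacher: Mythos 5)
Your overall architecture (show $Q\subseteq P$, deduce that $P=Q$ is a closed invariant equivalence relation, then kill $Q$ in the quotient) is the standard one and matches what the paper does via its Lemma~\ref{lem3.2}. But Stage~1, which you correctly identify as the heart of the matter, contains a genuine gap: you propose to get ``a common syndetic $A$ \ldots obtained by intersecting the two syndetic sets, as $T$ is a group.'' The intersection of two syndetic subsets of a group need not be syndetic --- it can be empty (in $\mathbb{Z}$, take $2\mathbb{Z}$ and $2\mathbb{Z}+1$). So there is no common syndetic set acting simultaneously on neighborhoods of the two distinct points $p$ and $q$, and the ``simultaneous control'' your argument hinges on never materializes. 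The fix, which is exactly the paper's Lemma~\ref{lem3.2}, is to apply local almost periodicity not at the two coordinates of the regionally proximal pair but at the single \emph{collapse point}: since $(x,y)\in Q(X)$ and $T$ is a group, there are nets $x_n\to z$, $y_n\to z$ and $\{t_n\}$ with $t_nx_n\to x$, $t_ny_n\to y$ (i.e.\ $(x,y)\in Q^-$). One then invokes l.a.p.\ only at $z$, obtaining a \emph{single} $\delta$ and a \emph{single} syndetic $A$ with $A(\delta[z])\subseteq\tfrac{\varepsilon}{3}[z]$; both $x_n$ and $y_n$ eventually lie in $\delta[z]$, so one syndetic set controls both coordinates at once. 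Writing $k_nt_n=a_n$ with $k_n$ in a compact $K$ and $a_n\in A$, and passing to a subnet with $k_n\to k$, one gets $k(x,y)\in\varepsilon$, whence $(x,y)\in P(X)$. Your detour through a minimal subset $M$ of $\mathrm{cls}_{X\times X}T(x,y)$ is unnecessary and does not repair the problem, since the same two-point difficulty recurs at $(p,q)\in M$.

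A secondary but real issue: in Stage~2 you assert that transitivity of $P$ follows from ``the same syndetic-compactness trick applied to three orbits simultaneously.'' This inherits the same flaw (no common syndetic set for three points). The honest route is the classical fact that once $P(X)$ is \emph{closed} (which you do get from $P=Q$), it is automatically an equivalence relation; that step uses the enveloping semigroup (uniqueness of the minimal left ideal), not a direct net manipulation. Similarly, in Stage~3 the quotient map onto $X/P(X)$ is not open in general, so ``$(T,Y)$ inherits l.a.p.'' needs justification; it is cleaner to show $Q(T,Y)=\varDelta_Y$ directly from $P(X)=Q(X)$ and then apply the characterization of equicontinuity via trivial $Q$ (Lemma~\ref{lem3.3}).
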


In fact, $(T,X/P(X))$ is actually the maximal equicontinuous factor of an l.a.p. flow $(T,X)$. Lemma~\ref{lem3.1} is a corollary of the following Lemma~\ref{lem3.2}.

Recall from Definition~\ref{0.11} that $(x,y)\in Q^-(T,X)$ if and only if there are nets $x_n\to z, y_n\to z$ and $\{t_n\}$ in $T$ such that $t_nx_n\to x$ and $t_ny_n\to y$. Thus if $(T,X)$ is invertible, then $Q^-(T,X)=Q(X,T)$ where $(X,T)$ is the reflection of $(T,X)$ (cf.~Definition~\ref{sn0.1}). Then Lemma~\ref{lem3.1} may be generalized as follows:

\begin{lem}\label{lem3.2}
If $(T,X)$ is an l.a.p. semiflow, then $P(T,X)\supseteq Q^-(T,X)$ and so $P[x]\supseteq Q^{-}[x]$ for all $x\in X$. $($Hence if $T$ is a group $P(T,X)=Q(T,X)$.$)$
\end{lem}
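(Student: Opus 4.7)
The plan is to unfold the definition of $Q^-$ and then exploit the l.a.p. property at the common limit point of the unfolded nets. First I would take $(x,y)\in Q^-(T,X)$ and, by Definition~\ref{0.11}(3), produce nets $\{a_n\},\{b_n\}\subset X$ and $\{t_n\}\subset T$ with $(a_n,b_n)\to\varDelta_X$ in the product uniformity, $t_na_n\to x$, and $t_nb_n\to y$. Using compactness of $X$ and passing to a subnet, I may assume there is a single $z\in X$ with $a_n\to z$ and $b_n\to z$.

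Next, fix $\varepsilon\in\mathscr{U}_X$; the task is to produce $\tau\in T$ with $(\tau x,\tau y)\in\varepsilon$. I would choose a symmetric $\delta\in\mathscr{U}_X$ with $\delta\circ\delta\subset\varepsilon$, and invoke the l.a.p. property at $z$ (Definition~\ref{sn0.3}) to obtain a neighborhood $V\ni z$ and a syndetic set $A\subset T$ with $AV\subset\delta[z]$. For $n$ large, $a_n,b_n\in V$, so $sa_n,sb_n\in\delta[z]$ for every $s\in A$; hence $(sa_n,sb_n)\in\delta\circ\delta\subset\varepsilon$ uniformly in $s\in A$. This is the key technical closeness; the remainder of the argument must transfer it from $(sa_n,sb_n)$ to $(\tau x,\tau y)$.

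Exploiting syndeticity, I would fix a compact $K\subset T$ with $T=KA$ and write $t_n=k_ns_n$ with $k_n\in K$ and $s_n\in A$. Passing to subnets gives $k_n\to k\in K$, while by the preceding paragraph $s_na_n\to u$ and $s_nb_n\to v$ for some $u,v\in X$ with $(u,v)\in\overline{\varepsilon}$. Joint continuity of the action then forces $x=ku$ and $y=kv$. Letting $\varepsilon$ range over $\mathscr{U}_X$ produces, for each $\varepsilon$, a triple $(u_\varepsilon,v_\varepsilon,k_\varepsilon)\in X\times X\times T$ with $k_\varepsilon u_\varepsilon=x$, $k_\varepsilon v_\varepsilon=y$, and $(u_\varepsilon,v_\varepsilon)$ tending to $\varDelta_X$ as $\varepsilon$ shrinks.

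The final step\,---\,converting this backward decomposition into an honest proximal net for $(x,y)$\,---\,is the main obstacle. In the flow (group) case of Lemma~\ref{lem3.1} one simply sets $\tau=k_\varepsilon^{-1}\in T$, so that $(\tau x,\tau y)=(u_\varepsilon,v_\varepsilon)$, producing a net tending to $\varDelta_X$ and thus exhibiting $(x,y)\in P$. For a semiflow with no ambient group this shortcut is unavailable, and one must instead find $\tau\in T$ so that $\tau k_\varepsilon$ lies in a syndetic subset of $T$ associated (via a second application of l.a.p. at $z$) to an arbitrarily small target neighborhood; because $u_\varepsilon,v_\varepsilon$ may be pressed into $V$, both $\tau x=\tau k_\varepsilon u_\varepsilon$ and $\tau y=\tau k_\varepsilon v_\varepsilon$ are then forced into that small neighborhood, yielding $(\tau x,\tau y)\in\varepsilon$ and hence $(x,y)\in P(T,X)$. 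The delicate point is the availability of such a $\tau$ in the semigroup $T$: this is trivial when $T$ is a group but in the semigroup setting requires a careful interaction between the syndeticity of $A$ and right multiplication by $k_\varepsilon$, which is where the classical Ellis–Gottschalk argument must be substantially adapted.
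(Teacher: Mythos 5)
There is a genuine gap, and it is exactly at the point you flag as ``the main obstacle.'' The root cause is that you apply syndeticity in the wrong direction. In this paper a set $A$ is (right-)syndetic when there is a compact $K\subseteq T$ with $Kt\cap A\neq\emptyset$ for every $t$ (Definition~\ref{sn0.2}); this yields, for each $t_n$, some $k_n\in K$ with $k_nt_n=a_n\in A$. It does \emph{not} yield the factorization $T=KA$, $t_n=k_ns_n$ with $s_n\in A$, that your argument uses --- in a group the two are interchangeable via $K\mapsto K^{-1}$, but in a semigroup they are genuinely different, and only the former is available. Your choice of decomposition is what forces you into the ``backward'' conclusion $x=ku$, $y=kv$ with $(u,v)$ near the diagonal, which cannot be converted into proximality of $(x,y)$ without inverting $k$; your sketch of a repair (a second application of l.a.p.\ to place $\tau k_\varepsilon$ in a new syndetic set) is left explicitly unfinished, so the proposal does not close.

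The correct decomposition makes the whole difficulty evaporate, and this is what the paper does. With $k_nt_n=a_n\in A$ and $x_n,y_n\in\delta[z]$ eventually, one computes the same limit two ways: $a_nx_n=k_n(t_nx_n)\to kx$ by joint continuity (after passing to a subnet with $k_n\to k\in K$), while $a_nx_n\in A(\delta[z])\subseteq\tfrac{\varepsilon}{3}[z]$ eventually, so $kx$ lies in (the closure of) $\tfrac{\varepsilon}{3}[z]$; likewise $ky$. Hence $(kx,ky)\in\varepsilon$ with $k=k_\varepsilon\in T$, and letting $\varepsilon$ shrink gives $(x,y)\in P(T,X)$ directly --- no inverses, no second invocation of local almost periodicity, and no need for the neighborhood $V$ at all (only $A(\delta[z])\subseteq\tfrac{\varepsilon}{3}[z]$ is used). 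I suggest you rewrite the second half of your argument around the identity $k_nt_n=a_n$ rather than $t_n=k_ns_n$.
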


\begin{proof}
Let $(x,y)\in Q^-(T,X)$; then there are nets $\{t_n\}$ in $T$, $\{x_n,y_n\}$ in $X\times X$, and $z\in X$ such that
$(x_n,y_n)\to(z,z)$, $t_nx_n\to x$, and $t_ny_n\to y$.
Since $z$ is an l.a.p. point of $(T,X)$, hence for every $\varepsilon\in\mathscr{U}_X$ there are some $\delta\in\mathscr{U}_X$ and a syndetic set $A$ in $T$ such that
$A(\delta[z])\subseteq\frac{\varepsilon}{3}[z]$, where $\frac{\varepsilon}{3}\in\mathscr{U}_X$ such that $\frac{\varepsilon}{3}\circ\frac{\varepsilon}{3}\circ\frac{\varepsilon}{3}\subseteq\varepsilon$. Then there is a compact subset $K$ of $T$ such that for all $n$ there are $k_n\in K$ and $a_n\in A$ with $k_nt_n=a_n$. By passing to a subnet of $\{t_n\}$ if necessary, we may suppose that
$$
k_n\to k\in T\quad \textrm{and}\quad k_nt_nx_n=a_nx_n\to x^\prime\in\varepsilon[z],\ k_nt_ny_n=a_ny_n\to y^\prime\in\varepsilon[z].
$$
Thus $k(x,y)=(x^\prime,y^\prime)\in\varepsilon$ and $Q^-(T,X)\subseteq P(T,X)$. The proof is complete.
\end{proof}

\begin{lem}[\cite{DX, AD}]\label{lem3.3}
A semiflow $(T,X)$ is a.p. if and only if it is equicontinuous surjective if and only if $Q(T,X)=\varDelta_X$.
\end{lem}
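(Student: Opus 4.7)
The plan is to verify the cyclic chain of implications: a.p. $\Rightarrow$ equicontinuous and surjective $\Rightarrow Q(T,X) = \varDelta_X \Rightarrow$ a.p. For a.p. $\Rightarrow$ surjective, I would use that pointwise almost periodicity makes each orbit closure $\mathrm{cls}_X(Tx)$ a minimal set; for any minimal $M \subseteq X$ and $t \in T$, the image $tM$ is nonempty, closed, and $T$-invariant (since $T$ has the identity $e$), whence $tM = M$ by minimality. Since $X$ is the union of its minimal subsets, this forces $tX = X$.

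For a.p. $\Rightarrow$ equicontinuous, given $\varepsilon \in \mathscr{U}_X$ I would pick symmetric $\varepsilon_1$ with $\varepsilon_1 \circ \varepsilon_1 \circ \varepsilon_1 \subseteq \varepsilon$, apply a.p. to obtain a syndetic $A \subseteq T$ with compact witness $K \subseteq T$ satisfying $Ay \subseteq \varepsilon_1[y]$ for every $y$, and use joint continuity on $K \times X$ to produce $\delta \in \mathscr{U}_X$ with $(x,y) \in \delta \cap \varepsilon_1 \Rightarrow (kx, ky) \in \varepsilon_1$ for all $k \in K$. For each $t \in T$ one picks $k \in K$ with $kt \in A$; then the chain $(k(tx), x), (x,y), (y, k(ty)) \in \varepsilon_1$ yields $(k(tx), k(ty)) \in \varepsilon$. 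The delicate step is descending from this to $(tx, ty) \in \varepsilon$: here I would first prove that a.p. implies every $t \in T$ is injective (if $tx = ty$ with $x \neq y$, the same syndetic compactness argument collapses $(x,y)$ into an arbitrarily small entourage, contradicting the $\textrm{T}_2$ axiom), upgrading $(T,X)$ to an invertible semiflow, and then exploit that the family of inverses of maps in $K$ acts equicontinuously.

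For equicontinuous + surjective $\Rightarrow Q(T,X) = \varDelta_X$, take $(x,x') \in Q$ with witnessing nets $x_n \to x$, $x_n' \to x'$ and $t_n \in T$ such that $(t_n x_n, t_n x_n')$ tends into $\varDelta_X$. Equicontinuity also forces $(t_n x_n, t_n x)$ and $(t_n x_n', t_n x')$ into $\varDelta_X$, hence $t_n x$ and $t_n x'$ share a common subnet-limit $u$, placing $(x,x') \in P(T,X)$. Equicontinuity together with surjectivity makes the Ellis semigroup $E(T,X)$ a group of continuous self-maps of $X$, so the semiflow is distal and $P(T,X) = \varDelta_X$, forcing $x = x'$. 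Finally, for $Q(T,X) = \varDelta_X \Rightarrow$ a.p., I would note that the canonical factor onto the maximal equicontinuous quotient $(T, X/Q)$ is then the identity, so $(T,X)$ itself is equicontinuous; combined with compactness of the Ellis group, this yields syndetic return times and hence the a.p. property.

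The main obstacle is bridging the semigroup-versus-group gap in the step a.p. $\Rightarrow$ equicontinuous: the standard flow proof writes $t = k^{-1} a$ with both $k$ and $k^{-1}$ acting equicontinuously, which is unavailable for an arbitrary semigroup. My plan hinges on extracting invertibility of each $t$ from the a.p. hypothesis itself, reducing to the flow-like situation in which $K^{-1}$ remains a compact, equicontinuous family.
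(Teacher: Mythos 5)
The paper gives no proof of Lemma~\ref{lem3.3} at all: it is imported from \cite{DX,AD} (see also the remark following Definition~\ref{sn0.7}), so there is no in-text argument to measure yours against; I therefore assess the proposal on its own terms. Its architecture is sensible and one ingredient is genuinely correct: the derivation of injectivity of each $t\in T$ from almost periodicity (if $tx=ty$, choose $k\in K$ with $kt=a\in A$; then $ax=ay$ is $\varepsilon_1$-close to both $x$ and $y$, so $(x,y)\in\varepsilon_1\circ\varepsilon_1$ for every $\varepsilon_1$, whence $x=y$). Likewise the implication ``equicontinuous surjective $\Rightarrow Q(T,X)=\varDelta_X$'' is fine once you quote Lemma~\ref{lem3.4} for distality. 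But the two hard directions are not actually established.

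First, your argument that a.p.\ implies surjectivity fails for non-abelian $T$: for a minimal set $M$ and $t\in T$, the set $tM$ is compact but there is no reason it is $T$-invariant, since $s(tM)=(st)M$ and $(st)M\subseteq tM$ requires commutativity; the parenthetical ``since $T$ has the identity $e$'' supplies nothing. The paper itself signals that minimality alone does not give surjectivity (Definition~\ref{sn0.1}.1 invokes an invariant measure precisely to get it), so no argument resting only on minimality of orbit closures can close this step. Second, the acknowledged ``delicate step'' --- descending from $(k(tx),k(ty))\in\varepsilon$ to $(tx,ty)$ small --- is where the real content lies, and your plan for it is circular: inverting the maps of $K$ on all of $X$ presupposes their surjectivity, which is the step that just failed, and even granting bijectivity the equicontinuity of $K^{-1}$ is asserted rather than proved (it holds because the homeomorphism group of a compact $\mathrm{T}_2$ space is a topological group in the uniform topology, so $K^{-1}$ is compact, hence equicontinuous --- but that needs saying). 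The standard escape, and presumably the route of \cite{DX}, is to show first that a.p.\ forces the Ellis semigroup $E(X)$ to be a compact group of homeomorphisms, which yields invertibility and equicontinuity simultaneously rather than sequentially. Third, the implication $Q(T,X)=\varDelta_X\Rightarrow$ a.p.\ begs the question: when $Q=\varDelta_X$, the assertion that ``the maximal equicontinuous factor is $(T,X/Q)$'' \emph{is} the assertion that $(T,X)$ is equicontinuous, and the theorem that the quotient by (the closed invariant equivalence relation generated by) $Q$ is equicontinuous is exactly the hard content of the Ellis--Gottschalk/Veech theory (Lemma~\ref{lem3.1} for flows, and for semiflows precisely what \cite{DX} proves); it cannot be invoked as known inside a proof of this lemma, and you would still need to extract surjectivity from $Q=\varDelta_X$ in this leg. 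In short: the skeleton and the soft implications are right, but a.p.\ $\Rightarrow$ invertible/equicontinuous and $Q=\varDelta_X\Rightarrow$ equicontinuous remain unproved.
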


\begin{lem}[\cite{AD}]\label{lem3.4}
If $(T,X)$ is equicontinuous surjective, then it is distal.
\end{lem}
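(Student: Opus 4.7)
The plan is to derive distality as an immediate corollary of Lemma~\ref{lem3.3} via the general, purely definitional inclusion $P(T,X)\subseteq Q(T,X)$. First I would verify this inclusion: given $(x,y)\in P(T,X)$, Definition~\ref{0.11} supplies a net $\{t_n\}$ in $T$ with $\lim_n t_n x=\lim_n t_n y$; then the constant nets $x_n\equiv x$ and $x_n'\equiv y$, together with this same $\{t_n\}$, trivially satisfy $x_n\to x$ and $x_n'\to y$ while witnessing $\lim_n t_n x_n=\lim_n t_n x_n'$, so $(x,y)\in Q(T,X)$.

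Second, applying Lemma~\ref{lem3.3} to the hypothesis that $(T,X)$ is equicontinuous surjective yields $Q(T,X)=\varDelta_X$, and combined with the inclusion just established this forces $P(T,X)\subseteq\varDelta_X$; reflexivity of $P$ then gives $P(T,X)=\varDelta_X$, which by the characterization recorded after Definition~\ref{1.7} is exactly the statement that $(T,X)$ is distal. There is essentially no substantive obstacle here: the real content has been absorbed into Lemma~\ref{lem3.3}, whose non-trivial direction (equicontinuous surjective implies $Q=\varDelta_X$) already performs the $\varepsilon$--$\delta$ work. A more direct attack that fixes $\varepsilon\in\mathscr{U}_X$, chooses an equicontinuity modulus $\delta$, and tries to bound $(x,y)\in\varepsilon$ starting from $(t_n x,t_n y)\in\delta$ cannot run in the correct direction without inverting the transitions, which confirms that the detour through $Q$ is the natural route.
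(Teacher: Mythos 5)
Your argument is correct: the inclusion $P(T,X)\subseteq Q(T,X)$ is immediate from Definition~\ref{0.11} by taking the constant nets $x_n\equiv x$, $x_n'\equiv y$ together with the proximality net $\{t_n\}$, and combining this with $Q(T,X)=\varDelta_X$ from Lemma~\ref{lem3.3} and the characterization $P(X)=\varDelta_X\Leftrightarrow$ distal recorded after Definition~\ref{1.7} yields the claim. The paper itself gives no proof of this lemma --- it is imported verbatim from the reference \cite{AD} --- so there is no internal argument to compare against; your derivation is a legitimate way to make the statement self-contained \emph{given} Lemma~\ref{lem3.3}, which is likewise only cited. The one caveat worth recording is logical rather than mathematical: your proof makes Lemma~\ref{lem3.4} a formal consequence of Lemma~\ref{lem3.3}, so it is only as independent as the external proof of the equivalence ``equicontinuous surjective $\Leftrightarrow Q=\varDelta_X$'' is of distality; the standard proofs of that equivalence (via uniform equicontinuity or the enveloping semigroup being a compact group of homeomorphisms) do not pass through distality, so no circularity actually arises, but a reader would want that said. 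Your closing remark is also apt: a direct $\varepsilon$--$\delta$ attack stalls because equicontinuity controls forward images $(t_nx,t_ny)$ while distality requires recovering information about $(x,y)$, which is exactly why one routes through $Q$ or through the group structure of the enveloping semigroup.
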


\begin{thm}[{cf.~\cite{G56} or \cite[Corollary~5.28]{E69} for $T$ a group}]\label{thm3.5}
Let $(T,X)$ be any semiflow. Then $(T,X)$ is a.p. if and only if $(T,X)$ is l.a.p. and distal.
\end{thm}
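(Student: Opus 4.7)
The forward direction is immediate from the preceding lemmas: by Lemma~\ref{lem3.3} any almost periodic semiflow is equicontinuous and surjective, whence distal by Lemma~\ref{lem3.4}. To verify local almost periodicity at $x\in X$, given a neighborhood $U$ of $x$ I would pick $\varepsilon\in\mathscr{U}_X$ with $\varepsilon[x]\subseteq U$ and a symmetric $\alpha$ with $\alpha\circ\alpha\subseteq\varepsilon$; equicontinuity furnishes $\delta\in\mathscr{U}_X$ such that $(y,z)\in\delta$ implies $(ty,tz)\in\alpha$ for every $t$, and almost periodicity furnishes a syndetic $A\subseteq T$ with $Ax\subseteq\alpha[x]$. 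Setting $V:=\delta[x]$, the triangle inequality on $\mathscr{U}_X$ yields $AV\subseteq\varepsilon[x]\subseteq U$.

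For the converse, my plan is to first extract invertibility from distality, then use invertibility together with local almost periodicity to prove equicontinuity at every point, and finally pass from pointwise to uniform equicontinuity by compactness. Invertibility is already encoded in the hypotheses: distality yields $t^{-1}[tx]=\{x\}$ for every $x$ (Definition~\ref{1.7}), so each transition is injective; and pointwise almost periodicity, which follows from distality, writes $X$ as a union of minimal sets $M$, each satisfying $tM=M$, so $tX=X$. Consequently every transition is a self-homeomorphism of the compact $\mathrm{T}_2$-space $X$, and by Definition~\ref{sn0.1} the reflection $(y,k)\mapsto k^{-1}y$ is a jointly continuous right-action semiflow $(X,T)$.

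The heart of the proof is the verification that every $x$ lies in $\mathrm{Equi}\,(T,X)$. Fix $\varepsilon\in\mathscr{U}_X$. By local almost periodicity at $x$, pick a neighborhood $V$ of $x$ and a syndetic $A\subseteq T$ with $AV\subseteq\alpha[x]$, where the symmetric $\alpha$ will be specified shortly, and fix a compact $K\subseteq T$ with $Kt\cap A\neq\emptyset$ for every $t$. Since $(y,k)\mapsto k^{-1}y$ is jointly continuous and $X\times K$ is compact, there exists $\beta\in\mathscr{U}_X$ with $(y,z)\in\beta$ implying $(k^{-1}y,k^{-1}z)\in\varepsilon$ for all $k\in K$; now choose $\alpha$ with $\alpha\circ\alpha\subseteq\beta$. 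For any $t\in T$ select $k\in K$ with $a:=kt\in A$; invertibility gives $t=k^{-1}\!\circ\!a$ as self-maps of $X$, so $tx'=k^{-1}(ax')$ for all $x'$. For $x'\in V$ the images $ax$ and $ax'$ both lie in $\alpha[x]$, hence $(ax,ax')\in\alpha\circ\alpha\subseteq\beta$ and therefore $(tx,tx')=(k^{-1}(ax),k^{-1}(ax'))\in\varepsilon$. A routine finite-cover argument (using that $X$ is compact) upgrades this pointwise equicontinuity to uniform equicontinuity of $(T,X)$, and Lemma~\ref{lem3.3} then delivers almost periodicity.

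The principal obstacle is the equicontinuity step just described, which requires one to simultaneously exploit invertibility (to factor $t$ as $k^{-1}a$ with $k$ ranging over a compact set) and the joint continuity of the reflection $(X,T)$ (to turn $K$ into an equicontinuous family). This is precisely why distality cannot be dropped in the converse: local almost periodicity alone controls the transitions only on the syndetic set $A$, and without invertibility neither the factorization $t=k^{-1}a$ nor the companion semiflow $(X,T)$ is at one's disposal.
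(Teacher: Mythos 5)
Your argument is correct in its main thrust but takes a genuinely different route from the paper's in the converse direction. The paper's proof is very short: from local almost periodicity and distality it invokes Lemma~\ref{lem3.2} to get $Q(X,T)=Q^-(T,X)\subseteq P(T,X)=\varDelta_X$, applies Lemma~\ref{lem3.3} to the reflection to conclude that $(X,T)$ is a.p., and then transfers almost periodicity back to $(T,X)$. You instead verify equicontinuity of $(T,X)$ directly, factoring an arbitrary $t$ as $k^{-1}a$ with $a$ in the syndetic set controlling a neighborhood of $x$ and $k$ in the compact set witnessing syndeticity, and using joint continuity of the reflection on the compact set $X\times K$ to make $\{k^{-1}\}_{k\in K}$ a uniformly equicontinuous family. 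This is essentially the classical Gottschalk--Hedlund style argument; it has the merit of bypassing the regionally proximal relation and the a.p.\ reflection principle entirely, at the cost that invertibility must already be in hand before the $\varepsilon$--$\delta$ computation can even be set up, whereas the paper only needs invertibility to identify $Q^-(T,X)$ with $Q(X,T)$. The forward direction is the same in both proofs (Lemmas~\ref{lem3.3} and \ref{lem3.4}); your verification of l.a.p.\ there is fine, though Definition~\ref{sn0.2}.3 already gives $A(\alpha[x])\subseteq(\alpha\circ\alpha)[x]$ directly, without appealing to equicontinuity.

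The one step whose stated justification does not hold up is surjectivity. You argue that pointwise almost periodicity writes $X$ as a union of minimal sets $M$, ``each satisfying $tM=M$.'' For a single map, or for abelian $T$, the set $tM$ is a nonempty closed invariant subset of $M$ and minimality forces $tM=M$; but for a general phase semigroup $s(tM)=(st)M$ need not be contained in $tM$, so $tM$ need not be invariant, and minimality alone does not yield surjectivity --- this is precisely why Definition~\ref{sn0.1} has to assume an invariant measure to deduce surjectivity of a minimal semiflow. The conclusion you need is nevertheless true, but it comes from distality rather than from minimality: every idempotent $u$ of the enveloping semigroup $E(X)$ satisfies $ux=x$ for all $x$ (the net defining $u$ sends $x$ and $ux$ to the same limit, so $(x,ux)\in P(X)=\varDelta_X$), hence $\mathrm{id}_X$ lies in a minimal left ideal, that ideal is all of $E(X)$, and $E(X)$ is a group; in particular each transition $t$ is bijective. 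The paper itself asserts ``l.a.p.\ distal and so invertible'' without proof, so you are in good company, but the justification you offer is the one that fails. With invertibility repaired in this way, the rest of your argument goes through.
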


\begin{proof}
Assume $(T,X)$ is a.p.; then $(T,X)$ is l.a.p. and moreover it is equicontinuous surjective by Lemma~\ref{lem3.3}. So $(T,X)$ is distal by Lemma~\ref{lem3.4}.

Conversely, suppose $(T,X)$ is l.a.p. distal and so $(T,X)$ is invertible with $P(T,X)=\varDelta_X$. By Lemma~\ref{lem3.2}, it follows that $Q(X,T)=\varDelta_X$ and so $(X,T)$ and then $(T,X)$ is a.p. by Lemma~\ref{lem3.3}.
\end{proof}

Next using the foregoing preparations we can readily prove Veech's Structure Theorem for a.a. flows as follows.
\begin{0.13}
If $(T,X)$ is an a.a. flow, then $(T,X)$ is an almost 1-1 extension of an equicontinuous flow $(T,Y)$ via $\pi\colon(T,X)\rightarrow(T,Y)$ such that $P_{\!aa}(T,X)=\{x\in X\,|\,\pi^{-1}[\pi(x)]=\{x\}\}$.
\end{0.13}

\begin{proof}
Let $(T,X)$ be an a.a. flow and then it is minimal by Lemma~\ref{1.8}. By Theorem~\ref{thm2.5} and Lemma~\ref{lem0.6}, $(T,X)$ is an l.a.p. minimal flow. Let $Y=X/P(X)$ and $(T,X)\xrightarrow{\pi}(T,Y)$ the canonical projection. Then $(T,Y)$ is an equicontinuous factor of $(T,X)$ by Lemma~\ref{lem3.1}. Now by Lemma~\ref{1.8}, $P[x]=\{x\}$ for all $x\in P_{\!aa}(T,X)$ so that $\pi$ is of almost 1-1 type such that $\pi^{-1}(\pi(x))=\{x\}$.

Finally, let $x\in X$ such that $\pi^{-1}(\pi(x))=\{x\}$. If $t_nx\to x^\prime$ and $t_n^{-1}x^\prime\to x^{\prime\prime}$ for a net $\{t_n\}$ in $T$, then by $t_n\pi(x)\to\pi(x^\prime)$ and $t_n^{-1}\pi(x^\prime)\to\pi(x^{\prime\prime})$, we can see that $\pi(x)=\pi(x^{\prime\prime})$ because $Q(Y)=\varDelta_Y$. Thus $x=x^{\prime\prime}$ and then $x\in P_{\!aa}(T,X)$. Thus $P_{\!aa}(T,X)=\{x\in X\,|\,\pi^{-1}(\pi(x))=\{x\}\}$.
The proof is thus complete.
\end{proof}

In fact, as far as we have known, this is the first complete proof of Veech's structure theorem of a.a. flows. The converse of Theorem~\ref{0.13} holds as follows:

\begin{thm}\label{thm3.6}
Let $(T,X)$ and $(T,Y)$ be two minimal semiflows. If $\pi\colon(T,X)\rightarrow(T,Y)$ is of almost 1-1 type and $(T,Y)$ is equicontinuous surjective, then $(T,X)$ is an a.a. semiflow such that $\{x\,|\,\pi^{-1}[\pi(x)]=\{x\}\}\subseteq P_{\!aa}(T,X)$.
\end{thm}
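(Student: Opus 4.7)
\medskip
\noindent\textbf{Proof proposal for Theorem~\ref{thm3.6}.}
The plan is to prove the stronger inclusion $\{x\in X\,|\,\pi^{-1}[\pi(x)]=\{x\}\}\subseteq P_{\!aa}(T,X)$ directly; once this is done, the almost 1-1 hypothesis on $\pi$ produces at least one point $x_0\in X$ with $\pi^{-1}[\pi(x_0)]=\{x_0\}$, and minimality of $(T,X)$ gives $\mathrm{cls}_XTx_0=X$, so $(T,X)$ is an a.a.\ semiflow by Definition~\ref{sn0.9}.

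The verification of the inclusion is a pull-back argument through $\pi$. Fix $x_0$ with $\pi^{-1}[\pi(x_0)]=\{x_0\}$, and suppose $\{t_n\}$ is a net in $T$ with $t_nx_0\to y$, $x_n^\prime\to x^\prime$ and $t_nx_n^\prime=y$. Applying the continuous equivariant map $\pi$, set $y_0=\pi(x_0)$, $y_n^\prime=\pi(x_n^\prime)$ and $y^\prime=\pi(x^\prime)$; then $y_n^\prime\to y^\prime$, the constant net $y_0\to y_0$, and
\begin{gather*}
t_ny_0=\pi(t_nx_0)\to \pi(y),\qquad t_ny_n^\prime=\pi(t_nx_n^\prime)=\pi(y).
\end{gather*}
Hence $(y_0,y^\prime)\in Q(T,Y)$ by Definition~\ref{0.11}(2). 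Since $(T,Y)$ is equicontinuous surjective, Lemma~\ref{lem3.3} gives $Q(T,Y)=\varDelta_Y$, so $\pi(x_0)=y_0=y^\prime=\pi(x^\prime)$. The choice of $x_0$ then forces $x^\prime=x_0$, which is exactly the almost-automorphy condition at $x_0$.

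The main conceptual step is the observation that the hypothesis ``$t_nx_n^\prime=y$'' (an exact equality, not a limit) passes through $\pi$ to ``$t_ny_n^\prime=\pi(y)$'' exactly, which is what allows the pair $(y_0,y^\prime)$ to land in $Q(T,Y)$ rather than merely in some looser closure. No delicate semigroup considerations are needed here, and no reliance on Theorem~\ref{0.13} is required, so the argument works uniformly for semiflows with an arbitrary phase semigroup $T$. The only real ingredient beyond elementary manipulation is Lemma~\ref{lem3.3}, which identifies equicontinuity of surjective semiflows with triviality of $Q$.
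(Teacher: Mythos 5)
Your argument is correct, and it is a genuinely more direct route than either of the paper's two proofs. The only substantive ingredients you use are (i) the observation that the defining net configuration of almost automorphy at $x_0$ pushes forward under the epimorphism $\pi$ to a configuration witnessing $(\pi(x_0),\pi(x^\prime))\in Q(T,Y)$ --- and here the exactness of $t_nx_n^\prime=y$ is indeed the crux, since it makes $\{t_n\pi(x_n^\prime)\}$ a constant net so that both limits in Definition~\ref{0.11}(2) exist and coincide --- and (ii) Lemma~\ref{lem3.3}, which gives $Q(T,Y)=\varDelta_Y$ because $(T,Y)$ is equicontinuous surjective. By contrast, the paper's Proof~I reaches the same conclusion by factoring $\pi$ through the maximal equicontinuous factor $(T,X/S_{\textit{eq}})$, checking that the $\pi_{\textit{eq}}$-fiber over $[x_0]_{\textit{eq}}$ is still a singleton, and invoking distality of the base (Lemma~\ref{lem3.4}) to conclude $Q[x_0]=\{x_0\}$ upstairs; and its Proof~II requires the extra hypothesis that $x\mapsto t^{-1}[x]$ be continuous and routes through the Bohr a.a.\ characterization of Theorem~\ref{thm2.2}. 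Your proof needs neither the structure relation $S_{\textit{eq}}$ nor any invertibility or continuity assumption on the inverse transitions, so it applies verbatim to arbitrary phase semigroups; it is essentially the computation the paper itself performs (for flows) in the final paragraph of its proof of Theorem~\ref{0.13}, transported to the semiflow definition of $P_{\!aa}$. The only thing Proof~I delivers that yours does not state explicitly is the slightly stronger intermediate fact $Q[x_0]=\{x_0\}$, but the standard push-forward $\pi(Q(T,X))\subseteq Q(T,Y)$ recovers that from your setup at no extra cost, so nothing is lost.
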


\begin{proof}[Proof I]
Let $x_0\in X$ be such that $\pi^{-1}[\pi(x_0)]=\{x_0\}$. Let $S_\textit{eq}$ be the equicontinuous structure of $(T,X)$; that is, $S_\textit{eq}$ is the minimal invariant closed equivalence relation $R$ on $X$ such that $(T,X/R)$ is equicontinuous. Then writing $[x]_\textit{eq}=S_\textit{eq}[x]$ for all $x\in X$, we have canonical epimorphisms
\begin{gather*}
\pi\colon (T,X)\xrightarrow[{x\mapsto[x]_{\textit{eq}}}]{\pi_\textit{eq}}(T,X/S_\textit{eq})\xrightarrow[{[x]_{\textit{eq}}\mapsto\pi(x)}]{\theta}(T,Y).
\end{gather*}
This implies that $\pi_\textit{eq}^{-1}[[x_0]_\textit{eq}]=\pi_\textit{eq}^{-1}[\pi_\textit{eq}(x_0)]=\{x_0\}$. Since $\pi_\textit{eq}(Q[x_0])\subseteq Q[[x_0]_\textit{eq}]=P[[x_0]_\textit{eq}]=\{[x_0]_\textit{eq}\}$ (noting $(T,Y)$ is distal by Lemma~\ref{lem3.4} and $\theta^{-1}[\theta([x_0]_\textit{eq})]=[x_0]_\textit{eq}$), $Q[x_0]=\{x_0\}$ and thus $x_0\in P_{\!aa}(T,X)$. The proof is complete.
\end{proof}

\begin{proof}[Proof II] (When $x\mapsto t^{-1}[x]$ is continuous for $t\in T$.)
At first, by Lemma~\ref{lem3.3}, $(T,Y)$ is a pointwise a.a. semiflow. Since $\pi\colon X\rightarrow Y$ is of almost 1-1 type, there exists a point $x\in X$ such that $\pi^{-1}(y)=\{x\}$. Further, for every open neighborhood $U$ of $x$ there is an open neighborhood $V$ of $y$ with $\pi^{-1}(V)\subseteq U$. Thus if $B\subset T$ such that $B^{-1}By\subseteq V$, then $B^{-1}Bx\subseteq U$. Then Theorem~\ref{thm3.6} follows easily from (1) of Theorem~\ref{thm2.2}. The proof is complete.
\end{proof}

The following consequence is due to Veech~\cite{V77}, which has been independently proved in \cite{AGN} by using different approaches.

\begin{cor}[Veech~\cite{V65,V77}]\label{cor3.7}
If $(T,X)$ is a minimal flow, then $(T,X)$ is equicontinuous if and only if $P_{\!aa}(T,X)=X$.
\end{cor}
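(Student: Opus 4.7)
The plan is to prove the two implications separately, with the forward direction being essentially a direct consequence of Lemma~\ref{lem3.3} and the backward direction being an immediate consequence of Veech's structure theorem (Theorem~\ref{0.13}).

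For the implication ``equicontinuous $\Rightarrow$ $P_{\!aa}(T,X)=X$'', I would first invoke Lemma~\ref{lem3.3} to conclude that $Q(T,X)=\varDelta_X$, so that $Q[x]=\{x\}$ for every $x\in X$. Then I would verify directly from Definition~\ref{sn0.9} that $Q[x]=\{x\}$ implies $x\in P_{\!aa}(T,X)$: given a net $\{t_n\}$ in $T$ with $t_nx\to y$, $x_n^\prime\to x^\prime$ and $t_nx_n^\prime=y$, take the constant net $x_n\equiv x$; then $x_n\to x$, $x_n^\prime\to x^\prime$, and $\lim_nt_nx_n=y=\lim_nt_nx_n^\prime$, so $(x,x^\prime)\in Q(T,X)=\varDelta_X$ by Definition~\ref{0.11}(2), hence $x=x^\prime$. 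This is the observation already flagged right after Definition~\ref{0.11}.

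For the implication ``$P_{\!aa}(T,X)=X$ $\Rightarrow$ equicontinuous'', I would use the minimality of $(T,X)$ together with the hypothesis to note that any $x\in X$ is an a.a. point with $\mathrm{cls}_XTx=X$, so $(T,X)$ is an a.a. flow in the sense of Definition~\ref{sn0.9}(2). I would then apply Theorem~\ref{0.13} (already proved above) to produce an equicontinuous factor $\pi\colon(T,X)\to(T,Y)$ of almost $1$-$1$ type with
\[
P_{\!aa}(T,X)=\{x\in X\,|\,\pi^{-1}[\pi(x)]=\{x\}\}.
\]
Since $P_{\!aa}(T,X)=X$, every fibre of $\pi$ is a singleton, so $\pi\colon X\to Y$ is a continuous bijection between compact Hausdorff spaces and hence a $T$-equivariant homeomorphism. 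Equicontinuity of $(T,Y)$ then transfers to $(T,X)$.

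There is no real obstacle here once Theorem~\ref{0.13} is in hand; the only point worth checking carefully is that minimality plus pointwise almost automorphy qualifies $(T,X)$ as an a.a.\ flow in the sense of Definition~\ref{sn0.9}(2), and that an epimorphism which is bijective on the underlying compact Hausdorff space is automatically a homeomorphism conjugating the two flows, so that equicontinuity is inherited from the factor to the extension.
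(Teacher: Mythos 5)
Your proposal is correct and follows essentially the same route as the paper: the forward direction via Lemma~\ref{lem3.3} and the observation that $Q[x]=\{x\}$ forces $x\in P_{\!aa}(T,X)$, and the backward direction by applying Theorem~\ref{0.13} and noting that $P_{\!aa}(T,X)=X$ makes $\pi$ a bijection, hence an isomorphism onto the equicontinuous factor. The extra details you supply (the constant-net verification and the compact-Hausdorff bijection-is-homeomorphism point) are exactly the steps the paper leaves implicit.
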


\begin{proof}
If $(T,X)$ is equicontinuous, then it follows that $Q(X)=\varDelta_X$ and then we can conclude $P_{\!aa}(T,X)=X$. Conversely, let $\pi\colon(T,X)\rightarrow(T,Y)$ be an almost 1-1 extension of an equicontinuous flow $(T,Y)$ given by Theorem~\ref{0.13}. If $P_{\!aa}(T,X)=X$, then $\pi$ is an isomorphism so that $(T,X)$ is equicontinuous.
\end{proof}

In fact, the above important statement still holds for minimal surjective semiflows; see Theorem~\ref{thm4.5} in $\S\ref{sec4}$. However, the minimality is crucial in the proof of Corollary~\ref{cor3.7}; see Example~\ref{exa4.7} for a counterexample.

\begin{thm}\label{thm3.8}
Let $(T,X)$ be invertible point-distal. If $(T,X)$ and its reflection $(X,T)$ both are l.a.p., then $(\langle T\rangle,X)$ is an a.a. flow.
\end{thm}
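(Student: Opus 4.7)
The plan is to prove $x_0\in P_{\!aa}(\langle T\rangle,X)$ for a distal $x_0\in X$ with $\mathrm{cls}_XTx_0=X$; this, together with minimality of $(\langle T\rangle,X)$ below, yields the conclusion by Definition~\ref{sn0.9}. Since every distal point is a.p., $X$ is a minimal set of $(T,X)$, and any closed $\langle T\rangle$-invariant subset of $X$ is in particular $T$-invariant; hence $(\langle T\rangle,X)$ is a minimal flow. Because $\langle T\rangle$ is a group, the Bochner condition at $x_0$ reduces to $Q_{\langle T\rangle}[x_0]=\{x_0\}$: if $\gamma_n x_0\to y$ and $\gamma_n^{-1}y\to x'$, setting $x_n':=\gamma_n^{-1}y$ gives $(x_0,x_n')\to(x_0,x')$ and $\gamma_n(x_0,x_n')=(\gamma_nx_0,y)\to(y,y)$, so $(x_0,x')\in Q_{\langle T\rangle}(X)$. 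It therefore suffices to show $Q_{\langle T\rangle}[x_0]=\{x_0\}$.

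First I would apply Lemma~\ref{lem3.2} twice to exploit the two one-sided l.a.p. hypotheses. From l.a.p. of $(T,X)$, $Q^{-}(T,X)\subseteq P(T,X)$; and an inspection of Definition~\ref{0.11} (aided by invertibility of $(T,X)$) identifies $Q^{-}(T,X)$ with the regional proximal relation $Q(X,T)$ of the reflection, so $Q(X,T)[x_0]\subseteq P(T,X)[x_0]=\{x_0\}$. The inclusion $P\subseteq Q$ then yields $P(X,T)[x_0]=\{x_0\}$, i.e., $x_0$ is distal in $(X,T)$ as well. Applying Lemma~\ref{lem3.2} to the l.a.p. semiflow $(X,T)$ gives $Q(T,X)=Q^{-}(X,T)\subseteq P(X,T)$, so $Q(T,X)[x_0]\subseteq P(X,T)[x_0]=\{x_0\}$. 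Thus $Q(T,X)[x_0]=Q(X,T)[x_0]=\{x_0\}$, and by the observation following Definition~\ref{0.11} we get $x_0\in P_{\!aa}(T,X)\cap P_{\!aa}(X,T)$.

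The central step is to upgrade the two triviality statements $Q(T,X)[x_0]=Q(X,T)[x_0]=\{x_0\}$ to $Q_{\langle T\rangle}[x_0]=\{x_0\}$. Given a witnessing triple $x_n\to x_0,y_n\to y,\gamma_n\in\langle T\rangle$ with $\gamma_nx_n,\gamma_ny_n\to z$, pass to a subnet so that each $\gamma_n$ is a reduced word of fixed length $k$ and fixed sign pattern in the generators $T\cup T^{-1}$, and induct on $k$. The case $k\le 1$ places $(x_0,y)$ directly in $Q(T,X)\cup Q(X,T)$ and yields $y=x_0$ by the previous paragraph. For the inductive step, write $\gamma_n=s_n\delta_n$ with $s_n\in T\cup T^{-1}$ and $\delta_n$ a word of length $k-1$. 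After a further subnet, $(u_n,v_n):=\delta_n(x_n,y_n)\to(u,v)$ and $s_n(u_n,v_n)\to(z,z)$, which puts $(u,v)$ into $Q(T,X)$ or $Q(X,T)$ according to the sign of $s_n$. Using the continuity of the homeomorphism $\delta_n$ (each $t\in T$ is a homeomorphism by invertibility, and inverses are jointly continuous, cf.\ the Standing hypothesis of \S\ref{sec1}) together with the matching l.a.p. hypothesis, one transports $(u,v)$ back along $\delta_n^{-1}$ to place $(x_0,y)$ into the same relation, forcing $y=x_0$.

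The hard part will be closing that inductive step. $Q(T,X)$ is generally not $T^{-1}$-invariant, and $Q(X,T)$ not $T$-invariant, so transporting the triviality of $Q[x_0]$ from the factored pair $(u,v)$ back to $(x_0,y)$ along a word of mixed generators is delicate, and this is precisely where having \emph{both} $(T,X)$ and $(X,T)$ l.a.p. is essential: the two hypotheses furnish symmetric control at each letter of the word, letting the induction close.
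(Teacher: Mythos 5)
Your first two paragraphs are sound and agree with the paper's starting point: minimality of $(\langle T\rangle,X)$, the reduction of almost automorphy of the flow to $Q_{\langle T\rangle}[x_0]=\{x_0\}$, and the chain $P(T,X)\supseteq Q^{-}(T,X)=Q(X,T)\supseteq P(X,T)\supseteq Q^{-}(X,T)=Q(T,X)$ obtained from Lemma~\ref{lem3.2} applied to both $(T,X)$ and $(X,T)$. The gap is exactly where you place it: the ``central step'' upgrading $Q(T,X)[x_0]=Q(X,T)[x_0]=\{x_0\}$ to $Q_{\langle T\rangle}[x_0]=\{x_0\}$ is announced but not proved, and the word-length induction you sketch does not close. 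Two concrete obstructions: (i) in your inductive step the factored pair $(u,v)=\lim_n\delta_n(x_n,y_n)$ lands at an arbitrary point of $X\times X$, whereas your hypotheses only give triviality of $Q(T,X)$ and $Q(X,T)$ at the single point $x_0$; knowing $(u,v)\in Q(T,X)$ tells you nothing unless $u=x_0$, which there is no reason to expect. (ii) ``Transporting back along $\delta_n^{-1}$'' is not a continuity argument, because the $\delta_n$ form a net of different homeomorphisms; membership of $(u,v)$ in $Q$ is a statement about limits over all of $T$ and is not pulled back along a varying net of maps. You flag this as ``delicate,'' but offer no mechanism for making it work.

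The way to close the gap --- and it is how the paper proceeds --- is to use the chain of inclusions globally rather than only at $x_0$: it forces $R:=P(T,X)=Q(T,X)=Q(X,T)=P(X,T)$, a closed invariant set which, by the standard argument (cf.\ \cite[Corollary~6.11]{Aus}), is an equivalence relation. The quotient $(T,X/R)$ is then minimal equicontinuous invertible, hence $(\langle T\rangle,X/R)$ is a minimal equicontinuous flow (\cite[Theorem~1.15]{AD}), and point-distality makes $\pi\colon X\to X/R$ almost 1-1, since $\pi^{-1}[\pi(x_0)]=R[x_0]=P(T,X)[x_0]=\{x_0\}$ at a distal point $x_0$. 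Theorem~\ref{thm3.6} applied to $\pi\colon(\langle T\rangle,X)\to(\langle T\rangle,X/R)$ then gives that $(\langle T\rangle,X)$ is a.a.; equivalently, equicontinuity of the quotient flow yields $Q_{\langle T\rangle}(X)\subseteq R$ in one stroke, which is the global statement your pointwise induction cannot reach. So the proposal assembles the right ingredients but is missing the decisive step.
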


\begin{proof}
By Lemma~\ref{lem3.2}, $P(T,X)\supseteq Q^-(T,X)=Q(X,T)\supseteq P(X,T)\supseteq Q(T,X)$. Since $tQ(X,T)\subseteq Q(X,T)$ for $t\in T$, $P(T,X)=Q(T,X)$ is closed invariant. Further by a standard argument (cf., e.g., \cite[Corollary~6.11, p.~88]{Aus}), we can see $Q(X)$ is an equivalence relation on $X$. This implies that
$\pi\colon(T,X)\rightarrow(T,X/Q(X))$ is an almost 1-1 extension such that $(T,X/Q(X))$ is minimal equicontinuous invertible. Let $\langle T\rangle_X=\langle T\rangle$ be associated to $(T,X)$. Then $\pi\colon X\rightarrow X/Q(X)$ may be naturally extended as an almost 1-1 extension $\pi\colon(\langle T\rangle_X,X)\rightarrow(\langle T\rangle_X,X/Q(X))$. Noting that $(\langle T\rangle_X,X/Q(X))$ is a minimal equicontinuous flow (cf.~\cite[Theorem~1.15]{AD}), it follows from Theorem~\ref{thm3.6} that $(\langle T\rangle,X)$ is an a.a. flow. The proof is thus complete.
\end{proof}

We note that if ``$(X,T)$ is l.a.p.'' is an implication of that $(T,X)$ is l.a.p. in the above theorem, then we can get a positive solution to (2) of Question~\ref{0.18}.
\section{Equicontinuity and pointwise almost automorphy}\label{sec4}
Veech proved that a function on a discrete group is a.p. if and only if it is a.a. and each of its limit points is a.a. (cf.~\cite[Theorem~3.3.1]{V65}). Now we shall show that Veech's theorem still holds for general surjective semiflows.
The principal result Theorem~\ref{thm4.5} claims that a minimal surjective semiflow is equicontinuous if and only if it is pointwise a.a., which actually generalizes Veech's theorem (Corollary~\ref{cor3.7} in $\S\ref{sec3}$).

\subsection{Pointwise almost automorphy}
By Veech's structure theorem (Theorem~\ref{0.13}), we can easily see that a pointwise a.a. minimal flow is a.p. (Corollary~\ref{cor3.7}). In fact, this also follows easily from Theorem~\ref{thm3.5}. However, for an invertible minimal semiflow, there is no such a Veech structure theorem at hands and moreover, we do not know if an a.a. point must be an l.a.p. point.
In view of these reasons, we need new ideas for proving that ``a minimal pointwise a.a. semiflow is a.p.''

\begin{lem}\label{lem4.1}
Let $(T,X)$ be a surjective semiflow. If $(T,X)$ is equicontinuous, then $P_{\!aa}(T,X)=X$.
\end{lem}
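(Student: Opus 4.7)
The plan is to exploit equicontinuity directly to force the auxiliary net $\{t_nx'\}$ to have the same limit as $\{t_nx\}$, and then invoke distality (which the hypothesis already provides via Lemma~\ref{lem3.4}) to conclude $x=x'$.

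More concretely, fix $x\in X$ and let $\{t_n\}$ be a net in $T$ with $t_nx\to y$, $x_n^\prime\to x^\prime$ and $t_nx_n^\prime=y$ for all $n$. I would first use the equicontinuity hypothesis, which, since $X$ is compact $\mathrm{T}_2$, is equivalent to the uniform version: for each $\varepsilon\in\mathscr{U}_X$ there exists $\delta\in\mathscr{U}_X$ such that $(a,b)\in\delta$ entails $(ta,tb)\in\varepsilon$ for \emph{all} $t\in T$. Since $x_n^\prime\to x^\prime$, eventually $(x_n^\prime,x^\prime)\in\delta$, hence $(t_nx_n^\prime,t_nx^\prime)=(y,t_nx^\prime)\in\varepsilon$ for all sufficiently large $n$. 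Because $\varepsilon$ was arbitrary, this shows $t_nx^\prime\to y$.

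Combining with $t_nx\to y$, the pair $(x,x^\prime)$ belongs to $P(T,X)$. By Lemma~\ref{lem3.4}, an equicontinuous surjective semiflow is distal, so $P(T,X)=\varDelta_X$; therefore $x=x^\prime$, proving $x\in P_{\!aa}(T,X)$. Since $x\in X$ was arbitrary we obtain $P_{\!aa}(T,X)=X$.

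No step here is really an obstacle: the only non-trivial input is the reduction of equicontinuity to the uniform form (valid because $X$ is compact, as noted after Definition~\ref{sn0.7}) together with the already established fact that equicontinuous surjective semiflows are distal. The argument does not require any structural theorem and works for arbitrary phase semigroups $T$.
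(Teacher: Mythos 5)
Your proof is correct, and it takes a different route from either of the two proofs the paper records. The paper's Proof~I passes to the enveloping group $\langle T\rangle$, citing \cite{AD} for the fact that an equicontinuous surjective semiflow has equicontinuous $(\langle T\rangle,X)$, and then uses pointwise almost automorphy of equicontinuous flows; its Proof~II invokes Theorem~\ref{thm3.6} with $\pi=\textit{id}_X$, a theorem whose stated hypotheses include minimality, which Lemma~\ref{lem4.1} does not assume. You instead argue directly from the definitions: the uniform form of equicontinuity (valid since $X$ is compact, as the paper notes after Definition~\ref{sn0.7}) turns $x_n^\prime\to x^\prime$ and $t_nx_n^\prime=y$ into $t_nx^\prime\to y$, so $(x,x^\prime)\in P(X)$, and Lemma~\ref{lem3.4} together with the identity ``$(T,X)$ distal iff $P(X)=\varDelta_X$'' from Definition~\ref{1.7} forces $x=x^\prime$. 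What your approach buys is self-containedness and generality: it uses no external structure results, no enveloping group, and no minimality, so it applies verbatim to any surjective equicontinuous semiflow with an arbitrary phase semigroup. What the paper's Proof~I buys in exchange is a reduction to the classical flow case, which is useful if one already has the flow theory at hand. The only inputs you rely on, the uniform reformulation of equicontinuity and Lemma~\ref{lem3.4}, are both established in the paper before this point, so there is no circularity.
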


\begin{proof}[Proof I]
Since $(T,X)$ is equicontinuous surjective, then by \cite{AD} it follows that $(\langle T\rangle,X)$ is equicontinuous and hence $P_{\!aa}(\langle T\rangle,X)=X$. This evidently shows that $P_{\!aa}(T,X)=X$. The proof is complete.
\end{proof}

\begin{proof}[Proof II]
This follows from Theorem~\ref{thm3.6} with $(T,Y)=(T,X)$ and $\pi=\textit{id}_X$.
\end{proof}

\begin{sn}[\cite{E69,Fur,Aus,AD}]\label{def4.2}
Let $E(X)$ be the closure of $T$ in $X^X$ under the pointwise topology, where each $t$ of $T$ is identified with the transition map $x\mapsto tx$ of $X$ to itself corresponding to $(T,X)$. Then $E(X)$ is called the \textit{Ellis enveloping semigroup} of $(T,X)$.
\end{sn}

In addition we will need two lemmas for any semiflow $(T,X)$.

\begin{lem}[{cf.~\cite[Lemma~5.2]{DX}}]\label{lem4.3}
Let $(T,X)$ be a surjective semiflow. Then $P_{\!aa}(T,X)=X$ if and only if $E(X)$ is a compact $\textrm{T}_2$ topological group with $e=\textit{id}_X$.
\end{lem}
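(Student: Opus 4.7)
The plan is to exploit the classical correspondence between equicontinuity of a surjective semiflow and the structure of its enveloping semigroup, combined with Lemma~\ref{lem4.1}. For the direction $(\Leftarrow)$, assume $E(X)$ is a compact $\textrm{T}_2$ topological group with identity $id_X$. Then each $p\in E(X)$ has an inverse $p^{-1}\in E(X)$, and joint continuity of the group operation together with compactness of $E(X)$ yields equicontinuity of the $E(X)$-action on $X$; restricting to $T\subseteq E(X)$ shows $(T,X)$ is equicontinuous surjective, and Lemma~\ref{lem4.1} delivers $P_{\!aa}(T,X)=X$.

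For the harder direction $(\Rightarrow)$, assume $P_{\!aa}(T,X)=X$. First I would invoke Lemma~\ref{1.8} to conclude every point of $X$ is distal, so $(T,X)$ is distal. Distality combined with surjectivity forces each $t\in T$ to be a bijection of $X$, and the classical theorem of Ellis then shows $E(X)$ is a compact right-topological group of bijections of $X$ with identity $e=id_X$; $E(X)$ is $\textrm{T}_2$ automatically since $X^X$ is Hausdorff in the product topology. It remains to promote the right-topological structure to a topological group. The key observation is a reformulation of almost automorphy inside the enveloping semigroup: if $\{t_n\}\subset T$ satisfies $t_n\to p$ in $E(X)$, then $x_n':=t_n^{-1}(p(x))$ is well-defined with $t_nx_n'=p(x)$, so the hypothesis $x\in P_{\!aa}(T,X)$ forces every convergent subnet of $\{x_n'\}$ to tend to $x$, that is, $t_n^{-1}\to p^{-1}$ in $E(X)$. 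Hence inversion is continuous along $T$-nets, and by the density of $T$ in $E(X)$ a diagonal approximation extends the continuity of inversion to all of $E(X)$. Since a compact right-topological group with continuous inversion is automatically a topological group (the identity $pq=((q^{-1})(p^{-1}))^{-1}$ rewrites left multiplication as a composition of right multiplication and inversion), $E(X)$ is a compact $\textrm{T}_2$ topological group with $e=id_X$.

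The main obstacle I foresee is the diagonal step extending continuity of inversion from $T$-nets to general nets in $E(X)$. Before topological-group structure is established the evaluation $E(X)\times X\to X$ is only separately continuous, so one cannot pass to limits inside $E(X)$ naively; the argument must genuinely approximate each element of $E(X)$ by elements of $T$ pointwise on $X$, reindex carefully, and then combine compactness of $X^X$ and $E(X)$ with the reformulation above. Once this step is handled, the loop $P_{\!aa}(T,X)=X \Leftrightarrow (T,X)$ equicontinuous $\Leftrightarrow$ $E(X)$ a compact $\textrm{T}_2$ topological group with $e=id_X$ closes immediately.
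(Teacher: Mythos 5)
The paper itself gives no proof of this lemma (it is quoted from \cite[Lemma~5.2]{DX}), so your argument must stand alone. Your $(\Rightarrow)$ direction is essentially sound: pointwise almost automorphy plus surjectivity makes every $t\in T$ a bijection (Definition~\ref{sn0.9}(iii)), pointwise distality (Lemma~\ref{1.8}) and the Ellis idempotent argument make $E(X)$ a compact $\textrm{T}_2$ right-topological group with identity $\mathrm{id}_X$, and your reformulation ``$t_n\to p$ with $t_n\in T$ implies $t_n^{-1}\to p^{-1}$'' is correct. The diagonal step you worry about does go through and is not circular: if $p_\alpha\to p$ in $E(X)$ and, by compactness, $p_\alpha^{-1}\to q$, approximate each $p_\alpha$ by a net $t_{\alpha,\beta}$ in $T$, note $t_{\alpha,\beta}^{-1}\to_\beta p_\alpha^{-1}$ by your reformulation applied at $p_\alpha$, and apply Kelley's iterated limit theorem in $E(X)\times E(X)$ to the pairs $(t_{\alpha,\beta},t_{\alpha,\beta}^{-1})$ to produce one net $s_\gamma$ in $T$ with $s_\gamma\to p$ and $s_\gamma^{-1}\to q$; the reformulation forces $q=p^{-1}$. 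No joint continuity of evaluation is needed here, only the product topology and Hausdorffness. Your identity $pq=\left((q^{-1})(p^{-1})\right)^{-1}$ then yields separate continuity of multiplication, and Ellis' joint continuity theorem closes that direction.

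The genuine gap is in your $(\Leftarrow)$ direction. The inference ``$E(X)$ is a compact $\textrm{T}_2$ topological group $\Rightarrow$ $(T,X)$ is equicontinuous'' is false, so you cannot route this direction through Lemma~\ref{lem4.1}. The paper's own Example~\ref{exa4.7}, $\varphi(r,\theta)=(r,\theta+r)$ on the disk, is pointwise a.a., hence by the very lemma you are proving its enveloping semigroup is a compact $\textrm{T}_2$ topological group; yet that flow is not equicontinuous, and the elements of $E(\mathbb{D})$ are discontinuous self-maps of the disk, so the $E(X)$-action is not even separately continuous in $x$, let alone equicontinuous. (Compare Lemma~\ref{llem4.4}: the correct equivalent is distality together with equicontinuity of each orbit closure, not of $(T,X)$.) The repair is direct and bypasses equicontinuity entirely: given $x\in X$ and a net with $t_nx\to y$, $x_n'\to x'$, $t_nx_n'=y$, pass to a subnet with $t_n\to p$ in $E(X)$; since $E(X)$ is a group with identity $\mathrm{id}_X$, each $t_n$ and $p$ is a bijection of $X$, so $x_n'=t_n^{-1}(y)$ and $y=px$; continuity of inversion followed by evaluation at the fixed point $y$ gives $x_n'=t_n^{-1}(y)\to p^{-1}(y)=p^{-1}(px)=x$, whence $x'=x$ and $x\in P_{\!aa}(T,X)$.
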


\begin{lem}[{cf.~\cite[Proposition~5.5]{DX}}]\label{llem4.4}
$E(X)$ is a compact $\textrm{T}_2$ topological group with $e=\textit{id}_X$ if and only if $(T,X)$ is distal and $(T,\mathrm{cls}_XTx)$ is equicontinuous for each $x\in X$.
\end{lem}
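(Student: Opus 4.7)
The plan is to prove both directions by exploiting the interplay between the topological-group structure on $E(X)$ (in the pointwise topology) and Ellis's characterization of distality. Note first that compactness and Hausdorffness of $E(X)$ are automatic (as a closed subset of $X^X$), so the content of the ``topological group'' hypothesis is joint continuity of composition together with continuity of inversion, and $\mathrm{id}_X \in E(X)$ holds trivially since $e\in T$ acts as $\mathrm{id}_X$.

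For the forward direction, assume $E(X)$ is a compact $\textrm{T}_2$ topological group with identity $\mathrm{id}_X$. Distality is immediate: every $p\in E(X)$ has an inverse $p^{-1}$ with $p^{-1}\circ p = \mathrm{id}_X$ pointwise, hence is injective; if $(x,y)\in P(X)$ with $t_n x,t_n y\to z$ and $t_n\to p$ in $E(X)$ (after passing to a subnet by compactness), then $px=py=z$ forces $x=y$. For equicontinuity of $(T,\mathrm{cls}_X Tx)$, first identify $\mathrm{cls}_X Tx = E(X)x$ using that $p\mapsto px$ is continuous from $E(X)$ to $X$. Next, I would argue that joint continuity of composition in $E(X)$ forces each $p\in E(X)$ to act continuously on every orbit closure: given $y_\alpha\to y$ in $\mathrm{cls}_X Ty$, write $y_\alpha=q_\alpha y$ and $y=qy$ with $q_\alpha\to q$ in $E(X)$ (by compactness plus a subnet argument), so $p y_\alpha = (p\circ q_\alpha)(y)\to (p\circ q)(y)=py$ by joint continuity. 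A standard compactness-plus-joint-continuity argument then shows that the map $E(X)\times \mathrm{cls}_X Tx\to \mathrm{cls}_X Tx$ is jointly continuous, whence the family $\{p|_{\mathrm{cls}_X Tx}:p\in E(X)\}$ is equicontinuous, and in particular so is $T\subseteq E(X)$.

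For the reverse direction, assume $(T,X)$ is distal and each $(T,\mathrm{cls}_X Tx)$ is equicontinuous. Distality gives that $E(X)$ is algebraically a group of bijections with identity $\mathrm{id}_X\in T\subseteq E(X)$. Equicontinuity on orbit closures implies that the Ellis semigroup of each subflow $(T,\mathrm{cls}_X Tx)$ consists of continuous maps; in particular, every $p\in E(X)$ restricts to a continuous self-map of each $\mathrm{cls}_X Ty$. This is exactly what is needed to upgrade composition on $E(X)$ from separate to joint continuity: right multiplication is automatic in the pointwise topology, and for left multiplication, if $q_\alpha\to q$ pointwise and $y\in X$ is fixed, then $q_\alpha(y),q(y)\in \mathrm{cls}_X Ty$ and $p$ is continuous there, so $p(q_\alpha(y))\to p(q(y))$. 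Finally, continuity of inversion follows from the classical fact that a compact Hausdorff semitopological group is automatically a topological group, so $E(X)$ is a compact $\textrm{T}_2$ topological group.

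The main obstacle is the delicate interaction between the pointwise topology on $E(X)$ and the pointwise-to-uniform passage. Concretely, in the forward direction, promoting the joint continuity of the $E(X)$-action to a uniform equicontinuity statement on $\mathrm{cls}_X Tx$ requires the standard but nontrivial compactness argument on $E(X)\times\mathrm{cls}_X Tx$; in the reverse direction, the subtle point is that orbit-closure continuity of every $p\in E(X)$ is just enough to give left continuity of composition, but only because the test point $q(y)$ and its approximants $q_\alpha(y)$ all live in the \emph{same} orbit closure $\mathrm{cls}_X Ty$, a fact that relies essentially on $p,q\in E(X)$ mapping orbits into orbit closures.
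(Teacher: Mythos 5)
The paper does not actually prove Lemma~\ref{llem4.4}; it is quoted from \cite[Proposition~5.5]{DX} without argument, so there is no internal proof to compare against. Your proposal is, as far as I can check, a correct and essentially self-contained proof. The forward direction (group $\Rightarrow$ distal via injectivity of each $p\in E(X)$, plus $\mathrm{cls}_XTx=E(X)x$ and the passage from joint continuity of composition to joint continuity of the action $E(X)\times\mathrm{cls}_XTx\to\mathrm{cls}_XTx$ and thence to equicontinuity by compactness of $E(X)$) is sound; note only that where you write ``$y_\alpha=q_\alpha y$ and $y=qy$ with $q_\alpha\to q$'' the base point should be $x$, and the subnet limit $q'$ of $\{q_\alpha\}$ need not be the chosen $q$ --- but $q'x=qx=y$ is all you use, and the sub-subnet argument (or, more cleanly, the fact that $q\mapsto qx$ is a closed, hence quotient, map from the compact $E(X)$ onto $E(X)x$) closes this. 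The reverse direction is also correct, but be aware that it leans on two nontrivial classical black boxes: Ellis's theorem that distality of $(T,X)$ makes the compact right-topological semigroup $E(X)$ a group (the idempotent/minimal-left-ideal argument, which does go through for semigroup actions and for the paper's pointwise definition of distality, since $ux\in\mathrm{cls}_XTx$ for every idempotent $u$), and Ellis's joint-continuity theorem that a compact Hausdorff semitopological group is a topological group. Your key observation --- that continuity of each $p\in E(X)$ on every orbit closure is exactly what is needed to make left multiplication $q\mapsto p\circ q$ continuous, because $q_\alpha y$ and $qy$ all lie in $\mathrm{cls}_XTy$ --- is the right pivot and is where the hypothesis ``equicontinuous on each $\mathrm{cls}_XTx$'' (rather than on all of $X$) earns its keep.
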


\begin{thm}\label{thm4.5}
Let $(T,X)$ be a minimal surjective semiflow; then $P_{\!aa}(T,X)=X$ iff $(T,X)$ is equicontinuous invertible iff $(T,X)$ is an a.p. semiflow.
\end{thm}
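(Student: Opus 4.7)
The plan is to split the theorem into two ``iff'' statements and observe that the second one, namely \emph{equicontinuous invertible iff a.p. semiflow}, is a classical fact already cited in the introduction from \cite{DX,AD} and requires no new argument. Thus all the work reduces to establishing the first equivalence $P_{\!aa}(T,X)=X\Leftrightarrow(T,X)$ is equicontinuous invertible.

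For the direction $(T,X)$ equicontinuous invertible $\Rightarrow P_{\!aa}(T,X)=X$, I would simply invoke Lemma~\ref{lem4.1}, which asserts that any equicontinuous surjective semiflow is pointwise a.a.; no use of minimality is needed here.

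For the converse, suppose $P_{\!aa}(T,X)=X$. My strategy is to funnel this hypothesis through the Ellis enveloping semigroup. I would first apply Lemma~\ref{lem4.3} to conclude that $E(X)$ is a compact $\textrm{T}_2$ topological group with $e=\textit{id}_X$. Next I would invoke Lemma~\ref{llem4.4} to deduce that $(T,X)$ is distal and that $(T,\mathrm{cls}_XTx)$ is equicontinuous for every $x\in X$. Since $(T,X)$ is minimal, $\mathrm{cls}_XTx=X$ for all $x$, so $(T,X)$ itself is equicontinuous. Invertibility would then follow for free: every $t\in T\subseteq E(X)$ has a two-sided inverse in the group $E(X)$, so $t\colon X\to X$ is a bijection, and being a continuous bijection of the compact $\textrm{T}_2$ space $X$ it is a homeomorphism. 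Hence $(T,X)$ is equicontinuous invertible, closing the loop.

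I do not anticipate a serious obstacle, since Lemmas~\ref{lem4.1}, \ref{lem4.3}, and \ref{llem4.4} already do the heavy lifting. The only mildly delicate point is making sure that the group structure on $E(X)$ really does yield invertibility of each transition $t\in T$ (and not just of abstract elements of $E(X)$); this is immediate from the inclusion $T\subseteq E(X)$ together with $e=\textit{id}_X$. The role of minimality is purely to promote the ``equicontinuous on each orbit closure'' conclusion of Lemma~\ref{llem4.4} into ``equicontinuous on all of $X$'', which is exactly what the statement requires.
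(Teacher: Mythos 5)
Your proposal is correct and follows essentially the same route as the paper: the forward direction via Lemma~\ref{lem4.1}, the converse by chaining Lemma~\ref{lem4.3} and Lemma~\ref{llem4.4} and using minimality to upgrade equicontinuity on orbit closures to equicontinuity on all of $X$, and the a.p.\ equivalence quoted from \cite{DX,AD}. Your extra remark on extracting invertibility of each transition $t\in T$ from the group structure of $E(X)$ is a sound (and slightly more explicit) justification of a step the paper leaves implicit.
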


\begin{proof}
If $(T,X)$ is equicontinuous invertible, then by Lemma~\ref{lem4.1} it follows that $P_{\!aa}(T,X)=X$. Conversely, if $P_{\!aa}(T,X)=X$, then $E(X)$ is a compact $\textrm{T}_2$ topological group by Lemma~\ref{lem4.3}. Further, since $(T,X)$ is minimal, hence by Lemma~\ref{llem4.4} $(T,X)$ is equicontinuous invertible. The proof is complete.
\end{proof}

\begin{cor}
Let $(T,X)$ be a minimal surjective semiflow. Then $(T,X)$ is equicontinuous invertible if and only if each point of $X$ is $\Delta^*$-recurrent for $(T,X)$.
\end{cor}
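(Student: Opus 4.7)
The plan is simply to chain together two results already established in the paper. First I would recall Theorem~\ref{1.6}, which asserts that a point $x \in X$ is $\Delta^*$-recurrent for $(T,X)$ if and only if $x \in P_{\!aa}(T,X)$. Applying this pointwise yields the equivalence
\begin{equation*}
\textrm{every }x\in X\textrm{ is }\Delta^*\textrm{-recurrent for }(T,X) \iff P_{\!aa}(T,X) = X,
\end{equation*}
i.e., the $\Delta^*$-recurrence condition in the statement is just a reformulation of pointwise almost automorphy.

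Next I would invoke Theorem~\ref{thm4.5}, which for a minimal surjective semiflow gives the equivalence $P_{\!aa}(T,X) = X \iff (T,X)$ is equicontinuous invertible. Combining these two equivalences finishes the proof.

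There is essentially no obstacle: the corollary is a direct translation, via the recurrence characterization of a.a.\ points (Theorem~\ref{1.6}), of the equicontinuity/pointwise-a.a.\ equivalence (Theorem~\ref{thm4.5}). Both the hypothesis that $(T,X)$ is surjective and the minimality hypothesis are exactly the standing assumptions needed to invoke Theorem~\ref{thm4.5}, and Theorem~\ref{1.6} holds under the surjective hypothesis that is already in force throughout Section~\ref{sec1}, so no additional verification is required.
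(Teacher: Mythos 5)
Your proposal is correct and matches the paper's own proof exactly: the paper also derives the corollary by combining Theorem~\ref{1.6} (a point is $\Delta^*$-recurrent iff it is an a.a.\ point) with Theorem~\ref{thm4.5} ($P_{\!aa}(T,X)=X$ iff $(T,X)$ is equicontinuous invertible, for a minimal surjective semiflow). Nothing further is needed.
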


\begin{proof}
This follows easily from the consequences of Theorem~\ref{thm4.5} and Theorem~\ref{1.6}.
\end{proof}

It is interesting to find a proof for Theorem~\ref{thm4.5} without using Ellis enveloping semigroup.
Moreover, it should be noted that if $(T,X)$ is not minimal, then the consequence of Theorem~\ref{thm4.5} need not be true. Let us see a simple classical example.

\begin{exa}\label{exa4.7}
Let $\varphi$ be the self homeomorphism of the disk $\mathbb{D}$ in the plane such that $\varphi(r,\theta)=(r,\theta+r)$, where $(r,\theta)$ are polar coordinates of $\mathbb{D}$. Clearly, $(\varphi,\mathbb{D})$, corresponding to a flow on $\mathbb{D}$ with phase group $\mathbb{Z}$, is pointwise a.a., i.e.,
$P_{\!aa}(\varphi,\mathbb{D})=\mathbb{D}$; but it is not equicontinuous.
\end{exa}

Let $(T,X)$ be surjective and $x\in X$. We say $(T^{-1}\circ T,X)$ is equicontinuous at $x$ if given $\varepsilon\in\mathscr{U}_X$ there is a $\delta\in\mathscr{U}_X$ such that $t^{-1}[sy]\subset\varepsilon[t^{-1}[sx]]$ and $t^{-1}[sx]\subset\varepsilon[t^{-1}[sy]]$ for all $y\in\delta[x]$ and $s,t\in T$.
The following theorem generalizes Veech's \cite[v) of Theorem~1.2.1]{V65} that is for a.a. functions on discrete groups.

\begin{thm}\label{thm4.8}
Let $(T,X)$ be a surjective semiflow such that $x\mapsto t^{-1}[x]$ is continuous for $t\in T$ and let $x_0\in X$. If $P_{\!aa}(T,X)\ni x_k\to x_0$ in $X$ and $(T^{-1}\circ T,X)$ is equicontinuous at $x_0$, then $x_0\in P_{\!aa}(T,X)$.
\end{thm}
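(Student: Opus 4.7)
The plan is to verify that $x_0$ is a \emph{discrete Bohr a.a.~point} of $(T,X)$ and then invoke Theorem~\ref{thm2.2}(1) to conclude $x_0\in P_{\!aa}(T,X)$; the continuity hypothesis on $x\mapsto t^{-1}[x]$ required by that theorem is precisely the standing assumption of the present statement. Given an arbitrary $\varepsilon\in\mathscr{U}_X$, I must therefore produce a discretely syndetic $B\subseteq T$ with $B^{-1}Bx_0\subseteq\varepsilon[x_0]$.

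The construction transfers a Bohr a.a.~set from a nearby $x_k$ to $x_0$. First fix a symmetric $\alpha\in\mathscr{U}_X$ with $\alpha\circ\alpha\circ\alpha\subseteq\varepsilon$. Next, equicontinuity of $(T^{-1}\circ T,X)$ at $x_0$ supplies $\delta\in\mathscr{U}_X$, which I shrink if necessary so that $\delta\subseteq\alpha$, such that $z\in\delta[x_0]$ forces $t^{-1}[sz]\subseteq\alpha[t^{-1}[sx_0]]$ and $t^{-1}[sx_0]\subseteq\alpha[t^{-1}[sz]]$ uniformly in $s,t\in T$. Since $x_k\to x_0$, fix $k$ with $x_k\in\delta[x_0]$. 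Applying Theorem~\ref{thm2.2}(1) to the a.a.~point $x_k$ with tolerance $\alpha$ produces a discretely syndetic $B\subseteq T$ with $B^{-1}Bx_k\subseteq\alpha[x_k]$, and this $B$ will be my candidate for $x_0$.

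To check $B^{-1}Bx_0\subseteq\varepsilon[x_0]$, take any $u\in t^{-1}[sx_0]$ with $s,t\in B$ and chain three $\alpha$-steps: the equicontinuity inclusion $t^{-1}[sx_0]\subseteq\alpha[t^{-1}[sx_k]]$ gives some $v\in t^{-1}[sx_k]$ with $(u,v)\in\alpha$; the choice of $B$ gives $(v,x_k)\in\alpha$ since $v\in B^{-1}Bx_k\subseteq\alpha[x_k]$; and $x_k\in\delta[x_0]\subseteq\alpha[x_0]$ gives $(x_k,x_0)\in\alpha$, whence $(u,x_0)\in\alpha\circ\alpha\circ\alpha\subseteq\varepsilon$. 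The delicate point is the order of choices --- $\alpha$ before $\delta$ before $k$ before $B$ --- together with the observation that the uniformity of equicontinuity over all $s,t\in T$ simultaneously is exactly the strength needed to carry a two-sided syndetic condition from $x_k$ to $x_0$; no subnet extraction or diagonal argument is required, and Theorem~\ref{thm2.2} is invoked twice: once to extract the Bohr set from $x_k$, and once to convert the Bohr property at $x_0$ back into $x_0\in P_{\!aa}(T,X)$.
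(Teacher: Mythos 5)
Your argument is correct, but it takes a genuinely different route from the paper's. The paper proves Theorem~\ref{thm4.8} directly from Definition~\ref{sn0.9}: given a net $\{t_n\}$ with $t_nx_0\to x^\prime$ and $x^\prime t_n\to x^{\prime\prime}$, it passes to a subnet $\{s_i\}$ along which $\lim_is_i^{-1}\lim_is_ix_k=x_k$ (using that $x_k$ is a.a.), then compares $x^{\prime\prime}=\lim_is_i^{-1}\lim_is_ix_0$ with $\lim_is_i^{-1}\lim_is_ix_k$ via the equicontinuity of $(T^{-1}\circ T,X)$ at $x_0$ and the triangle inequality; it writes this out only for the invertible case and asserts the general case is similar. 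You instead route everything through the Bohr characterization: Theorem~\ref{thm2.2}(1) applied at $x_k$ produces a discretely syndetic $B$ with $B^{-1}Bx_k\subseteq\alpha[x_k]$, the equicontinuity estimate (uniform over all $s,t\in T$) transports this to $B^{-1}Bx_0\subseteq\varepsilon[x_0]$ by the three-link chain $(u,v)\in\alpha$, $(v,x_k)\in\alpha$, $(x_k,x_0)\in\alpha$, and Theorem~\ref{thm2.2}(1) applied at $x_0$ converts the Bohr property back into $x_0\in P_{\!aa}(T,X)$. Both directions of Theorem~\ref{thm2.2}(1) are available under your hypotheses (continuity of $x\mapsto t^{-1}[x]$ is assumed and $(T,X)$ is surjective), your order of quantifiers ($\alpha$, then $\delta$, then $k$, then $B$) is right, and the chain is legitimate because the entourages in $\mathscr{U}_X$ are symmetric. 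What your approach buys: the set-valued preimages are handled uniformly, so the non-invertible case needs no separate treatment, and the proof isolates exactly where the two-sidedness of the Bohr condition enters. What it costs: it leans on the full strength of Theorem~\ref{thm2.2}(1), and hence on the construction underlying Theorem~\ref{1.10}, whereas the paper's net argument needs only the elementary property (ii) listed after Definition~\ref{sn0.9}.
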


\begin{proof}
Let $(T,X)$ be invertible.
Let $t_nx_0\to x^\prime, x^\prime t_n\to x^{\prime\prime}$ for a net $\{t_n\}$ in $T$. We need to show $x_0=x^{\prime\prime}$. For this, let $\delta,\varepsilon\in\mathscr{U}_X$ be any given with $\delta\subset\varepsilon$ such that $(T^{-1}\circ T)\delta[x_0]\subseteq\varepsilon$. Since $x_k\to x_0$, then $(x_k,x_0)\in\delta$ as $k\ge k_0$. Let a subnet $\{s_i\}$ of $\{t_n\}$ be so chosen that $\lim_is_i^{-1}\lim_is_ix_k=x_k$. Noting that $x^{\prime\prime}=\lim_is_i^{-1}\lim_is_ix_0$ we have that
$$\left(x^{\prime\prime},{\lim}_is_i^{-1}{\lim}_is_ix_k\right)=\left({\lim}_is_i^{-1}{\lim}_is_ix_0,{\lim}_is_i^{-1}{\lim}_is_ix_k\right)\in\varepsilon.$$
By the triangle inequality,
$$\left(x^{\prime\prime},x_0\right)\in\left({\lim}_is_i^{-1}{\lim}_is_ix_0,{\lim}_is_i^{-1}{\lim}_is_ix_k\right)\left({\lim}_is_i^{-1}{\lim}_is_ix_k,x_0\right)\in\varepsilon\circ\varepsilon.$$
Since $\varepsilon$ be arbitrary, thus $x_0=x^{\prime\prime}$. The general case may be similarly proved and we omit the details here. The proof is complete.
\end{proof}
\subsection{Almost periodic and a.a. functions of discrete groups}
\begin{sht}
In this subsection we suppose that
\begin{enumerate}
\item $G$ is a discrete group and by $L^\infty=L^\infty(G)$ we denote the set of real-valued bounded functions on $G$.

\item The pointwise topology on $L^\infty$ is the topology for which $f_n\to f\Leftrightarrow f_n(x)\to f(x)\ \forall x\in G$. For any $T\subseteq G$, $H_T(f)=\textrm{cls}\{tf\,|\,t\in T\}$ under the pointwise topology.

\item The uniform topology on $L^\infty$ is the topology for which $f_n\to f\Leftrightarrow f_n(x)\to f(x)$ uniformly for $x\in G$.
\end{enumerate}
\end{sht}
\noindent
Under each of the pointwise and uniform topologies, we can then define the right-translate flow $(G,L^\infty)$ as follows:
$$\tau_R\colon G\times L^\infty\rightarrow L^\infty,\quad (t,f)\mapsto tf,$$
which is isometric (i.e., $\|tf\|=\|f\|$ for all $t\in G$ and $f\in L^\infty$) and so which is equicontinuous with respect to the uniform topology on $L^\infty$.

\begin{sn}[{cf.~\cite{Neu, GH, V65, E69} for $T=G$}]\label{4.9}
Let $T$ be a subgroup of $G$.
\begin{enumerate}
\item $f\in L^\infty$ is said to be (right) $T$-a.p. in the sense of Bochner--von Neumann if and only if $Tf$ is relatively compact in $(L^\infty,\|\cdot\|)$.


\item $f\in L^\infty$ is called an (right) $T$-\textit{a.a.} function on $G$ if and only if $t_nf\to \eta, f_n^\prime\to f^\prime$ and $t_nf_n^\prime=\eta$ implies $f=f^\prime$ for every net $\{t_n\}$ in $T$ in the pointwise topology on $L^\infty$.
\end{enumerate}
\end{sn}

%

Now as a consequence of Theorem~\ref{thm4.5}, we can concisely obtain the following classical result which is due to Bochner and Veech.

\begin{thm}[{\cite{B62} and \cite[Theorem~3.3.1]{V65} for $T=G$}]\label{thm4.10}
Let $T$ be a syndetic subgroup of $G$ and $\xi\in L^\infty$. Then $\xi$ is Bochner--von Neumann $T$-a.p. if and only if every element of $H_T(\xi)$ is an $T$-a.a. function on $G$.
\end{thm}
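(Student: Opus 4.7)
The plan is to identify the $T$-a.p.\ (respectively $T$-a.a.) property of $\xi$ with equicontinuity (resp.\ pointwise almost automorphy) of the translate flow $(T,H_T(\xi))$ and then apply Theorem~\ref{thm4.5}. The decisive auxiliary device will be the syndeticity of $T$ in $G$, which I will use to bridge between the uniform topology and the pointwise topology on the orbit closure $H_T(\xi)$.

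For the forward direction, assume $\xi$ is Bochner--von Neumann $T$-a.p., so that $Y := \mathrm{cls}^{\|\cdot\|}(T\xi)$ is norm-compact. Right translation is a $\|\cdot\|$-isometry and $T$ is a group, so $Y$ is $T$-invariant and $(T,Y)$ is an equicontinuous (in fact isometric) flow in the uniform topology. Since the pointwise topology is Hausdorff and strictly weaker than the uniform topology, the identity map from $Y$ with the uniform topology to $Y$ with the pointwise topology is a continuous bijection from a compact space to a Hausdorff space, hence a homeomorphism. Therefore $Y = H_T(\xi)$ and the two topologies coincide on $H_T(\xi)$. Applying Lemma~\ref{lem4.1} to $(T,H_T(\xi))$ gives $P_{\!aa}(T,H_T(\xi)) = H_T(\xi)$, and since almost automorphy in the pointwise-topology flow is exactly the definition of a $T$-a.a.\ function (Definition~\ref{4.9}(2)), every element of $H_T(\xi)$ is a $T$-a.a.\ function on $G$.

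For the converse, suppose every $f \in H_T(\xi)$ is $T$-a.a., i.e.\ $P_{\!aa}(T,H_T(\xi)) = H_T(\xi)$ in the pointwise topology. By Lemma~\ref{1.8} each such point is distal, hence almost periodic, so $H_T(\xi)$ is minimal as the orbit closure of the distal point $\xi$. Theorem~\ref{thm4.5} then asserts that $(T,H_T(\xi))$ is equicontinuous invertible in the pointwise topology. I would then upgrade this to relative compactness in the uniform topology as follows: by syndeticity write $G = KT$ for a finite set $K = \{k_1,\ldots,k_m\}$, and for a given $\varepsilon>0$ invoke pointwise equicontinuity with the finite set $K$ to produce a finite $F' \subset G$ and a $\delta>0$ such that whenever $f,g \in H_T(\xi)$ satisfy $|f(y)-g(y)|<\delta$ for all $y \in F'$, then
\begin{equation*}
|f(k_i t) - g(k_i t)| = |(tf)(k_i) - (tg)(k_i)| < \varepsilon \qquad \forall\,t \in T,\ 1 \le i \le m.
\end{equation*}
Since $G = KT$ this forces $\|f-g\|_\infty \le \varepsilon$. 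Hence every pointwise-$(F',\delta)$-ball in $H_T(\xi)$ lies inside a uniform $\varepsilon$-ball, and the pointwise compactness of $H_T(\xi)$ provides a finite cover by such balls; so $H_T(\xi)$ is totally bounded, hence relatively compact, in $(L^\infty,\|\cdot\|)$, which means exactly that $\xi$ is Bochner--von Neumann $T$-a.p.

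The main obstacle is this last step of the converse: passing from pointwise-topology equicontinuity to uniform-topology relative compactness. It is precisely here that the hypothesis that $T$ is \emph{syndetic} in $G$ is indispensable, for without it one cannot promote control of $|f-g|$ on a finite subset of $G$ to uniform control on all of $G$, and the equivalence breaks down.
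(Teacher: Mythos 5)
Your proof is correct and follows essentially the same route as the paper's: both directions reduce to Theorem~\ref{thm4.5} (resp.\ Lemma~\ref{lem4.1}) applied to the right-translation flow on the hull, with the syndeticity $G=KT$ combined with pointwise equicontinuity used to identify the pointwise and uniform topologies on $H_T(\xi)$. The only cosmetic differences are that you obtain minimality of $H_T(\xi)$ via Lemma~\ref{1.8} where the paper invokes Lemmas~\ref{lem4.3} and \ref{llem4.4}, and that you are slightly more careful in allowing the equicontinuity entourage to be indexed by a finite set $F'$ possibly different from $K$.
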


\begin{proof}
Necessity: Let $\xi$ be $T$-a.p. in the sense of Bochner--von Neumann. Let $F=\mathrm{cls}_{\|\cdot\|}T\xi$ be the uniform closure of $T\xi$. Then $(T,F)$ is a minimal equicontinuous flow and so it is pointwise $T$-a.a. by Theorem~\ref{thm4.5} under the uniform topology. If $s_n\xi\to f$, for a net $\{s_n\}$ in $T$, in the pointwise topology, then $f\in F$. Thus $F=H_T(\xi)$. Given $f\in F$, assume $t_nf\to y$ and $t_n^{-1}y\to f^\prime$ in the sense of the pointwise topology. Since $t_nf\in F$, then by passing to a subsequence $\{\alpha_i\}$ of the net $\{t_n\}$, it holds that $\alpha_if\to y$ and $\alpha_i^{-1}y\to f$ in the sense of the uniform topology. Thus $f=f^\prime$. This shows every $f$ of $F$ is an $T$-a.a. function on $G$.

Sufficiency: Assume every element of $H_T(\xi)$ is an $T$-a.a. function on $G$. Since $\xi$ is bounded, $H_T(\xi)$ is compact $\textrm{T}_2$ under the pointwise topology. Moreover, by Lemmas~\ref{lem4.3} and \ref{llem4.4}, $(T,H_T(\xi))$ is minimal. Thus $(T,H_T(\xi))$ is a minimal equicontinuous flow under the pointwise topology by Theorem~\ref{thm4.5} again. Further the pointwise topology coincides with the uniform topology on $H_T(\xi)$ so $\xi$ is an $T$-a.p. function. In fact, let $G=KT$ for some finite set $K\subset G$ and assume $f_n\to f$ in $H_T(\xi)$ in the sense of pointwise topology and let $\epsilon>0$. Then there is some $\delta>0$ such that whenever $z,w\in H_T(\xi)$ such that $\max_{k\in K}|z(k)-w(k)|<\delta$ then $\max_{k\in K}|tz(k)-tw(k)|<\epsilon$ for all $t\in T$ by equicontinuity of $T$ $\tau_R$-acting on $H_T(\xi)$. This means that
as $n>n_0$ for some $n_0$, $\max_{k\in K}|tf_n(k)-tf(k)|<\epsilon$ for all $t\in T$. Thus
$$\|f_n-f\|={\sup}_{x\in G}|f_n(x)-f(x)|={\sup}_{t\in T}{\max}_{k\in K}|f_n(kt)-f(kt)|\le\epsilon.$$
This implies that $f_n\to f$ in the sense of uniform topology. Thus $T\xi$ is relatively compact under the uniform topology and $\xi$ is $T$-a.p. in the sense of Bochner--von Neumann.
\end{proof}

It should be mentioned that if $T$ was only a semigroup not a group in Theorem~\ref{thm4.10}, $(T,H_T(\xi))$ would not be surjective so that Theorem~\ref{thm4.5} would play no role in the foregoing proof.

We can of course define the left $T$-a.p. and $T$-a.a. functions on $G$. In 1965 \cite{V65} for the case $T=G$, Veech's proof needs to utilize the left almost automorphy of $\xi$ associated to the left-translate flow $\tau_L\colon L^\infty\times G\rightarrow L^\infty$ by $(f,t)\mapsto ft$ where $ft(x)=f(tx)$ for all $x\in G$. Also see \cite[Theorem~2.3.1]{V65} for the special case $G=(\mathbb{Z},+)$. Then the following results motivate our Questions~\ref{sec0.1}(i) and \ref{sec0.1}(ii).
\begin{description}
\item[(V)] A bounded function on $G$ is left a.a. if and only if it is right a.a. (cf.~Veech~\cite[Theorem~1.3.1]{V65} or $\S$\ref{sec0.1}.(i)).
\item[(L)] A bounded function on $G$ is left a.p. in the sense of Bochner--von Neumann if and only if it is right a.p. in the sense of Bochner--von Neumann (cf.~von Neumann~\cite{Neu} or Loomis~\cite[Lemma~41B]{L}; also see $\S$\ref{sec0.1}.(ii)).
\end{description}

It is known that on a complete metric space not locally compact, the orbit of an a.p. point need not be relatively compact; see \cite[Proposition~4.6]{CD}. However, for functions on $G$ we can obtain the following simple observation:
\begin{itemize}
\item Let $f\in L^\infty$ and $T$ a subgroup of $G$. Then $f$ is Bochner-von Neumann $T$-a.p. if and only if it is a.p. for $T$ in the sense of Definition~\ref{sn0.2}.2, i.e., given $\varepsilon>0$, $N_T(f,\varepsilon)=\{t\in T\,|\,\|tf-f\|<\varepsilon\}$ is right-syndetic in $T$.
\end{itemize}
\section{Veech relationships of invertible semiflows}\label{sec5}
Using Veech's relations we will prove Theorem~\ref{0.14} and Theorem~\ref{0.17} in this section and consider invertible semiflows with abelian phase semigroups.
The Veech relations are not only useful for almost automorphy but also for capturing the equicontinuous structure of flows \cite{V68}. Here we will consider two kinds of relations introduced by Veech in \cite{V65,V68}.

\subsection{Relations $V$ and $D$}
\begin{sn}[{cf.~\cite{V65,AGN} for $T$ a group}]\label{def5.1}
Let $(T,X)$ be any semiflow. Then:
\begin{enumerate}
\item We say that an ordered pair $(x,x^\prime)$ is in ``Veech'' relation, denoted $(x,x^\prime)\in V(T,X)$ or $x^\prime\in V[x]$, if there exist a $y\in X$, nets $\{t_n\}$ in $T$ and $\{x_n^\prime\}$ in $X$ such that $t_nx\to y$, $x_n^\prime\to x^\prime$ and $t_nx_n^\prime=y$.

\item We will say that $x$ is \textit{well-proximal to} $x^\prime$ if there is a net $\{t_n\}$ in $T$ such that $t_n(x,x^\prime)\to (x^\prime,x^\prime)$.
\end{enumerate}

If $(T,X)$ is surjective and $x\in P_{\!aa}(T,X)$, then $x$ is only well-proximal itself in $X$.
If $T$ is a group or an abelian semigroup, then $V(T,X)$ is invariant. It is obvious that $V[x]=\{x\}$ if and only if $x\in P_{\!aa}(T,X)$. Moreover, $V(T,X)\subseteq Q(T,X)$. However, even if $(T,X)$ is a flow, here we cannot in general show that $V(T,X)$ is an equivalence relation. In fact, $V(T,X)$ is not symmetric in general.
\end{sn}

The following theorem in the special case of $T=\mathbb{Z}$ is just Furstenberg's \cite[Proposition~9.14]{Fur}.

\begin{thm}\label{thm5.2}
Let $X$ be a compact metric space, $(T,X)$ a surjective semiflow such that $x\mapsto t^{-1}[x]$ is continuous for $t\in T$, and $x\in X$. If $x$ is well-proximal to $x^\prime$, then $x^\prime\in V[x]$.
\end{thm}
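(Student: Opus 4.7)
Let $\{t_n\}\subset T$ be a net witnessing the well-proximality, so $t_nx\to x'$ and $t_nx'\to x'$. The plan is to verify $(x,x')\in V(T,X)$ in the sense of Definition~\ref{def5.1} by choosing the target point $y=x'$ itself. Since $(T,X)$ is surjective, the fibres $A_n:=t_n^{-1}[x']$ are non-empty compact subsets of $X$, so it will suffice, along some cofinal subnet, to pick $x'_n\in A_n$ with $x'_n\to x'$: the triple $(t_n,x'_n,y=x')$ will then automatically witness $t_nx\to y$, $x'_n\to x'$, and $t_nx'_n=y$.

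The strategic observation is that $x'\in t_n^{-1}[t_nx']$ trivially while $t_nx'\to x'$, so $x'$ lies in the fibres of $t_n$ over points tending to $x'$; the full continuity of the set-valued inverse $w\mapsto t_n^{-1}[w]$ should then transfer $x'$ into the fibre over $x'$ itself. Since $X$ is compact metric, the hyperspace of non-empty compact subsets of $X$ under the Hausdorff metric $d_H$ is compact, so along a subnet we may assume $A_n\to A$ in $d_H$; it therefore remains to prove $x'\in A$. For this, fix $\varepsilon>0$ and invoke lower semi-continuity of $w\mapsto t_n^{-1}[w]$ at $w=t_nx'$ with the open set $B_\varepsilon(x')$ meeting $t_n^{-1}[t_nx']$ at $x'$: there is a neighbourhood $V_n$ of $t_nx'$ with $t_n^{-1}[w]\cap B_\varepsilon(x')\neq\emptyset$ for every $w\in V_n$. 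If the containment $x'\in V_n$ holds cofinally, then $A_n\cap B_\varepsilon(x')\neq\emptyset$ cofinally, hence $\mathrm{dist}(x',A)\le\varepsilon$, and letting $\varepsilon\downarrow 0$ gives $x'\in A$.

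The main obstacle is precisely that the radius of $V_n$ depends on $n$ without a priori uniform control, so the containment $x'\in V_n$ cannot be read off from $t_nx'\to x'$ directly. To handle this I will diagonalize through the closed graphs
\[
\Gamma_n=\{(t_ny,y):y\in X\}\subset X\times X;
\]
by compactness of the hyperspace of $X\times X$ under the Hausdorff metric, a subnet yields $\Gamma_n\to\Gamma$. The convergence $(t_nx',x')\to(x',x')$ forces $(x',x')\in\Gamma$, which unwinds (after a further refinement) to pairs $(w_n,y_n)\in\Gamma_n$ with $w_n\to x'$ and $y_n\to x'$, i.e., $t_ny_n=w_n\to x'$ with $y_n\to x'$. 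The final and most delicate step is to upgrade the approximate identity $t_ny_n=w_n\approx x'$ to an exact one: using the continuity of $w\mapsto t_n^{-1}[w]$ at $w_n$, perturb $y_n$ to $x'_n\in t_n^{-1}[x']$ with $d(x'_n,y_n)\to 0$, so that $x'_n\to x'$ as required. This lifting of approximate to exact equality, in the face of an $n$-varying modulus of continuity, is the heart of the argument and relies essentially on the compact metric structure of $X$.
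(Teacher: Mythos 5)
There is a genuine gap, and it sits exactly where you place ``the heart of the argument.'' Your plan is to take $y=x'$ and produce $x_n'\in t_n^{-1}[x']$ with $x_n'\to x'$ for the original well-proximality net. All three of your attempts to do this reduce to the same unproved claim. The first attempt needs $x'\in V_n$ cofinally, which, as you note, cannot be extracted from $t_nx'\to x'$ because the radius of $V_n$ depends on $n$. The graph-limit detour is vacuous: the only output of $(x',x')\in\Gamma$ is a pair $(w_n,y_n)\in\Gamma_n$ with $t_ny_n=w_n\to x'$ and $y_n\to x'$, which is precisely the data you started from (take $y_n=x'$, $w_n=t_nx'$); no new information is gained. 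The ``final and most delicate step'' then asserts that one can perturb $y_n$ to $x_n'\in t_n^{-1}[x']$ with $d(x_n',y_n)\to 0$ ``using the continuity of $w\mapsto t_n^{-1}[w]$ at $w_n$'' --- but that continuity again only comes with an $n$-dependent modulus, so this is a restatement of the obstacle, not a resolution. Indeed the claim is false in general for individual surjections with continuous fibre maps: one can have homeomorphisms $t_n$ of a compact metric space with $t_nx'\to x'$ while $t_n^{-1}[x']$ stays a fixed distance from $x'$; nothing in the hypotheses forces the fibres over $x'$ to approach $x'$ along the given net. Note also that the theorem only asserts $x'\in V[x]$ for \emph{some} $y$ and \emph{some} net; insisting on $y=x'$ and the original net is what makes the problem intractable.

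The paper's proof avoids this by making the exact equality $s_mx_m'=y$ an algebraic identity rather than a limit. From well-proximality and metrizability it extracts, via Furstenberg's construction (\cite[Theorem~2.17]{Fur}), a sequence $\{p_n\}$ in $T$ whose tail products satisfy $p_{m+1}\dotsm p_{m+\ell}x\to x'$ as $m\to\infty$. Setting $s_n=p_1\dotsm p_n$, one has the exact inclusion $p_{m+1}\dotsm p_{m+\ell}x\in s_m^{-1}[s_{m+\ell}x]$ by construction. Passing to a subsequence with $s_nx\to y$ and $s_n^{-1}[y]\to K$, the continuity of $w\mapsto s_m^{-1}[w]$ (for each fixed $m$, which is all that is needed since one first lets $n\to\infty$ and only then $m\to\infty$) forces $x'\in K$; choosing $x_n'\in s_n^{-1}[y]$ with $x_n'\to x'$ then witnesses $x'\in V[x]$ with $y=\lim s_nx$, which need not equal $x'$. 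If you want to salvage your write-up, the missing ingredient is exactly this replacement of the arbitrary well-proximality net by a product sequence whose tails land in the fibres on the nose.
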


\begin{proof}
Since $x$ is well-proximal to $x^\prime$ and $X\times X$ is metrizable, there is a sequence $\{p_n\}$ in $T$ with $p_{m+1}\dotsm p_{m+\ell}x\to x^\prime$ as $m\to\infty$ for all $\ell\ge1$, following Furstenberg's construction (cf., e.g.,~\cite[Theorem~2.17]{Fur}). Set $s_n=p_1p_2\dotsm p_n$ for $n=1,2,\dotsc$ and suppose $s_nx\to y$ and $s_n^{-1}[y]\to K$ for some closed subset $K$ (passing to a subsequence if necessary). We claim $x^\prime\in K$. Otherwise, let $U_{x^\prime}$ and $U_K$ be two disjoint neighborhoods of $x^\prime$ and $K$, respectively. Then $s_m^{-1}[s_nx]\subset U_K$ for $n>m$ as $m$ sufficiently big. This contradicts that $s_m^{-1}[s_{m+\ell}x]\supseteq p_{m+1}\dotsm p_{m+\ell}x\to x^\prime$ as $m\to\infty$. This proves Theorem~\ref{thm5.2}.
\end{proof}

Since each point of $X$ is well proximal to some a.p. point of a semiflow (\cite{Fur}), therefore we can conclude that if $x$ is an a.a. point in Theorem~\ref{thm5.2}, then $x$ is an a.p. point by a line different with Lemma~\ref{1.8}.

\begin{cor}\label{cor5.3}
Let $(T,X)$ be a surjective semiflow on a compact metric space $X$ such that $x\mapsto t^{-1}[x]$ is continuous for $t\in T$. Then:
\begin{enumerate}
\item[$(1)$] If $(x,y)\in P(X)$ with $y$ an a.p. point, then $(x,y)\in V(T,X)$. Particularly $P(T,X)\subseteq V(T,X)$ if $(T,X)$ is minimal.
\item[$(2)$] If $(x,y)\in P(T,X)$, then $V[x]\cap V[y]$ contains an a.p. point of $(T,X)$.
\end{enumerate}
\end{cor}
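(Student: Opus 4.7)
The plan is to deduce both parts from Theorem~\ref{thm5.2} by producing, for each proximal pair, a net $\{t_n\}\subset T$ along which $(x,y)$ collapses onto the diagonal of $X\times X$ at a chosen almost periodic point; once such a net exists, the hypothesis that $x$ is well-proximal to some $z$ is realised and the theorem yields $z\in V[x]$.

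For part~(1), suppose $(x,y)\in P(X)$ and that $y$ is an a.p.\ point. Working in the product semiflow $(T,X\times X)$, proximality gives a net $\{s_n\}$ in $T$ with $s_n(x,y)\to(p,p)$ for some $p\in X$. Since $\varDelta_X$ is closed and $T$-invariant, $\mathrm{cls}_{X\times X}T(p,p)\subseteq\varDelta_X$. Pick any minimal set $N$ inside $\mathrm{cls}_{X\times X}T(p,p)$; then $N=\{(z,z):z\in M\}$ for a minimal $M\subseteq\mathrm{cls}_XTp\subseteq\mathrm{cls}_XTy$. Because $y$ is a.p., $\mathrm{cls}_XTy$ is itself minimal, forcing $M=\mathrm{cls}_XTy\ni y$; hence $(y,y)\in N\subseteq\mathrm{cls}_{X\times X}T(x,y)$. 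Extracting a net $\{t_n\}$ with $t_n(x,y)\to(y,y)$ exhibits $x$ as well-proximal to $y$ in the sense of Definition~\ref{def5.1}, and Theorem~\ref{thm5.2} gives $y\in V[x]$. The minimal-case statement $P(T,X)\subseteq V(T,X)$ then follows since every point of a minimal semiflow is a.p.

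For part~(2), given $(x,y)\in P(T,X)$, the same construction yields a minimal set $N=\{(z,z):z\in M\}$ inside $\mathrm{cls}_{X\times X}T(x,y)$. Pick any $z\in M$: it is an a.p.\ point of $(T,X)$, and a net $\{t_n\}$ with $t_n(x,y)\to(z,z)$ witnesses that both $(x,z)$ and $(y,z)$ lie in $P(X)$ with $z$ almost periodic. Applying part~(1) to each of these two pairs gives $z\in V[x]\cap V[y]$, as desired.

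I expect the only real subtlety to be verifying that the minimal set in $\mathrm{cls}_{X\times X}T(x,y)$ can be arranged to lie on the diagonal and moreover to contain $(y,y)$ (respectively some a.p.\ diagonal point): the first is forced by the existence of the proximal witness $(p,p)$ in the orbit closure together with the closedness and $T$-invariance of $\varDelta_X$, and the second relies on $\mathrm{cls}_XTy$ itself being minimal, which is exactly the almost periodicity of $y$. Once the collapsing net is in hand, the rest is a direct invocation of Theorem~\ref{thm5.2}, with no further analysis of the set-valued inverses $t^{-1}[\cdot]$ required beyond the hypothesis already assumed.
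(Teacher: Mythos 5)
Your part (1) is correct and takes a genuinely different route from the paper: where the paper passes to the Ellis enveloping semigroup, picks a minimal left ideal $I$ with $px=py$ for all $p\in I$ and a minimal idempotent $u\in I$ fixing the a.p.\ point $y$ (so that $ux=uy=y$ and a net $t_n\to u$ realises $t_n(x,y)\to(y,y)$), you instead locate a minimal set of the product semiflow inside $\mathrm{cls}_{X\times X}T(p,p)\subseteq\varDelta_X$ and use the minimality of $\mathrm{cls}_XTy$ to force $(y,y)$ into $\mathrm{cls}_{X\times X}T(x,y)$. Both arguments terminate at the same place (well-proximality of $x$ to $y$, then Theorem~\ref{thm5.2}), but yours avoids the enveloping semigroup entirely, which is a legitimate and somewhat more elementary alternative for (1).

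Part (2), however, has a genuine gap. From $(z,z)\in N\subseteq\mathrm{cls}_{X\times X}T(x,y)$ you obtain a net with $t_nx\to z$ and $t_ny\to z$; this witnesses that $x$ is proximal to $y$, but it does \emph{not} witness $(x,z)\in P(X)$ or $(y,z)\in P(X)$: for that you need a net along which $x$ and $z$ (resp.\ $y$ and $z$) converge to a \emph{common} point, and you have no control over where $t_nz$ goes. The claim is in fact false for an arbitrary choice of $z\in M$. Take $(T,X)$ a minimal equicontinuous flow (an irrational rotation, say) and $x=y$; then $\mathrm{cls}_{X\times X}T(x,x)=\varDelta_X$ is itself minimal, so your construction permits $M=X$ and any $z\in X$, yet $P(X)=\varDelta_X$, so $(x,z)\notin P(X)$ for $z\neq x$ (and indeed $z\notin V[x]=\{x\}$, so feeding such a pair into part (1) would yield a false conclusion). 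Thus ``pick any $z\in M$'' cannot work; $z$ must be chosen so that $x$ and $y$ really are well-proximal to it. This is exactly what the paper's minimal idempotent buys: with $u\in J$ and $z=ux=uy$ one has $uz=z$, so a net $t_n\to u$ gives $t_n(x,z)\to(z,z)$ and $t_n(y,z)\to(z,z)$ simultaneously, and Theorem~\ref{thm5.2} applies to both pairs at once. To repair your argument you would need a substitute for the idempotent (for instance Auslander's Zorn's-lemma construction of an a.p.\ point to which a given point is proximal, applied in the product system and pushed onto the diagonal); the bare existence of a diagonal minimal set in $\mathrm{cls}_{X\times X}T(x,y)$ is not enough.
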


\begin{proof}
(1): Let $E(X)$ be the Ellis enveloping semigroup as in Definition~\ref{def4.2}. If $(x,y)\in P(T,X)$, then there is a minimal left ideal $I$ in $E(X)$ such that $p(x)=p(y)$ for all $p\in I$. Let $J=\{u\in I\,|\,u^2=u\}$.
Since $y$ is an a.p. point of $(T,X)$, $u(x)=u(y)=y$ for some $u\in J$ so that $x$ is well proximal well to $y$. Thus (1) of Corollary~\ref{cor5.3} holds by Theorem~\ref{thm5.2}.

(2): Given $u\in J$, let $z=u(x)=u(y)$ and so $u(z)=z$. Then by Theorem~\ref{thm5.2}, it follows that $z\in V[x]\cap V[y]$ and $z$ is an a.p. point of $(T,X)$. Thus we have proved (2) of Corollary~\ref{cor5.3}.
\end{proof}

\begin{note}
The metric on $X$ plays a role in our proof of Theorem~\ref{thm5.2}. It would be interested to know if this statement holds for flows with compact $\textrm{T}_2$ non-metrizable phase spaces.
\end{note}

The second Veech relation we will consider in this section is the following one.

\begin{sn}[{cf.~\cite{V68} for $T$ a group}]\label{def5.4}
Let $(T,X)$ be any semiflow. Given $x\in X$, define $D[x]$ for $(T,X)$ by
$$
D[x]={\bigcap}_{\varepsilon\in\mathscr{U}_X}\mathrm{cls}_X\left\{\bigcup t^{-1}[sx]\,|\, s,t\in N_T(x,\varepsilon[x])\right\}.
$$
$D[x]$ is closed, and of course $x\in D[x]$. Then we will say $(x,y)\in D(T,X)$ if and only if $y\in D[x]$. Clearly, $D(T,X)$ is invariant when $T$ is a group or an abelian semigroup.
\end{sn}

By the definition, it is easy to verify that
\begin{itemize}
\item {\it $y\in D[x]$ $\Leftrightarrow$ $\exists$ $\{t_n\}, \{s_n\}$ in $T$ and $\{y_n\}$ in $X$ s.t. $t_nx\to x, s_nx\to x, y_n\to y$ and $t_ny_n=s_nx$.}
\end{itemize}

To show $V[x]\subseteq D[x]$ for all point $x\in X$ of any invertible minimal semiflow $(T,X)$, we will need an algebraic lemma.

\begin{lem}\label{lem5.5}
Let $T$ be a semigroup of surjective self-maps of $X$ and $x\in X$. If $A$ is a discretely syndetic set of $T$, then $A^{-1}A$ is a $\Delta^*$-set of $T$ in the sense that for any net $\{t_i^\prime\}$ in $T$, there is a subnet $\{t_n\}$ of $\{t_i^\prime\}$ such that $t_m^{-1}[t_nx]\subseteq A^{-1}Ax$ for all $m<n$.
\end{lem}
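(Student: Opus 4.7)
The plan is to exploit the finiteness built into discrete syndeticity and use it to collapse the net $\{t_i'\}$ to a subnet on which a fixed element of the syndetic ``witness'' pushes each $t_n$ into $A$. The computation $t_m^{-1}[t_nx]\subseteq A^{-1}Ax$ then becomes essentially formal once one observes that multiplying by a surjective map only \emph{enlarges} the inverse-image set.

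First I would unpack the hypothesis: since $A$ is discretely syndetic, there is a finite subset $K\subset T$ such that $Kt\cap A\ne\emptyset$ for every $t\in T$. So for each index $i$ choose some $k_i\in K$ with $k_it_i'\in A$. Because $K$ is finite, standard net theory lets me pass to a subnet on which $k_i$ is constantly equal to some fixed $k_0\in K$; relabel this subnet as $\{t_n\}$. Then $k_0t_n\in A$ for every $n$.

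Next I would do the set-theoretic computation that lies at the heart of the lemma. Fix $m<n$ and take any $y\in t_m^{-1}[t_nx]$, so $t_my=t_nx$. Applying $k_0$ to both sides gives $(k_0t_m)y=(k_0t_n)x$, hence $y\in(k_0t_m)^{-1}[(k_0t_n)x]$. Therefore
\begin{equation*}
t_m^{-1}[t_nx]\;\subseteq\;(k_0t_m)^{-1}\bigl[(k_0t_n)x\bigr]\;=\;(k_0t_m)^{-1}(k_0t_n)\,x,
\end{equation*}
and since both $k_0t_m$ and $k_0t_n$ lie in $A$, the right-hand side is contained in $A^{-1}Ax$, as desired.

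The only delicate points are bookkeeping ones: that the elements $t_m^{-1}$ and $(k_0t_m)^{-1}$ are \emph{set-valued} (since we have not assumed invertibility, only surjectivity), and that the inclusion goes the right way. The surjectivity of each $t\in T$ ensures $t^{-1}[tz]\ni z$, which is precisely what the inclusion above uses; no appeal to injectivity is needed. There is no real obstacle here\,---\,the statement is essentially a pigeonhole argument\,---\,so I expect the proof to occupy just a few lines.
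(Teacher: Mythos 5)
Your proof is correct and follows essentially the same route as the paper: pigeonhole on the finite syndeticity witness $K$ to extract a subnet with a constant $k_0$ satisfying $k_0t_n\in A$, then the containment $t_m^{-1}[t_nx]\subseteq(k_0t_m)^{-1}[(k_0t_n)x]\subseteq A^{-1}Ax$, which is exactly the paper's computation $t_m^{-1}[t_nx]\subseteq a_m^{-1}[a_nx]$ phrased as an element chase. Nothing is missing.
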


\begin{proof}
Let $\{t_n\}$ be any net in $T$. Since $A$ is syndetic, there is a finite subset $K=\{k_1,\dotsc,k_m\}$ of $T$ such that $T=K^{-1}A$. Then for $n$, $k_jt_n=a_n$ for some $a_n\in A$ and some $j$ with $1\le j\le m$. By considering a subnet of $\{t_n\}$ if necessary, we can assume $kt_n=a_n$ and then $t_n^{-1}k^{-1}=a_n^{-1}$ for all $n$, where $k\in K$ is independent of $n$. Thus by $a_m^{-1}ky=t_m^{-1}k^{-1}ky\supseteq t_m^{-1}y$ for $y\in X$, it follows that
$$
t_m^{-1}[t_nx]\subseteq a_m^{-1}[kt_nx]=a_m^{-1}[a_nx]\subseteq A^{-1}Ax
$$
for all $m<n$. This shows that $A^{-1}A$ is a $\Delta^*$-set of $T$.
\end{proof}

\begin{thm}[{cf.~\cite[Theorem~16]{AGN} for $T$ a group}]\label{thm5.6}
Let $(T,X)$ be a surjective semiflow such that $x\mapsto t^{-1}[x]$ is continuous for $t\in T$. If $(T,X)$ is minimal and $x\in X$, then:
\begin{enumerate}
\item[$(1)$] $V[x]\subseteq D[x]$.
\item[$(2)$] $D[x]=U[x]\subseteq Q[x]$, where $U[x]=\{y\,|\,\exists y_n\to y\textrm{ and }\{t_n\}\textrm{ in }T\textrm{ s.t. }t_n(x,y_n)\to (x,x)\}$.
\end{enumerate}
\end{thm}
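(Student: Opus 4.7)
My plan is to prove (2) first, obtaining the identification $D[x]=U[x]$, and then deduce (1) by showing $V[x]\subseteq U[x]$. Throughout I will use the net reformulation of $D[x]$ recorded in the paper just above Lemma~\ref{lem5.5}: $y\in D[x]$ iff there exist nets $\{t_n\},\{s_n\}$ in $T$ and $\{z_n\}$ in $X$ with $t_nx\to x$, $s_nx\to x$, $z_n\to y$, and \emph{exactly} $t_nz_n=s_nx$ for each $n$.

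The two easy halves of (2) are $D[x]\subseteq U[x]$ and $U[x]\subseteq Q[x]$. The first is immediate: from $t_nz_n=s_nx\to x$ and $t_nx\to x$, the pair $(t_n,z_n)$ already witnesses $y\in U[x]$. The second uses the constant net $x_n\equiv x$ in Definition~\ref{0.11}: given $y\in U[x]$ via $y_n\to y$, $t_nx\to x$, $t_ny_n\to x$, one has $\lim t_nx=x=\lim t_ny_n$, so $(x,y)\in Q(T,X)$.

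The main obstacle is the reverse inclusion $U[x]\subseteq D[x]$, where the asymptotic $t_ny_n\to x$ must be upgraded to an \emph{exact} equation $t_nw_n=\sigma_nx$ in $T$. Fix $\alpha\in\mathscr{U}_X$ and pick $\alpha'$ with $\alpha'\circ\alpha'\subseteq\alpha$. Using $y\in U[x]$, choose $n$ large enough that $t_nx,t_ny_n\in\alpha'[x]$ and $y_n\in\alpha'[y]$. The hypothesis that $z\mapsto t_n^{-1}[z]$ is Hausdorff continuous---in particular \emph{lower} semicontinuous at $t_ny_n$---supplies an open neighborhood $W\subseteq\alpha'[x]$ of $t_ny_n$ with $t_n^{-1}[z]\cap\alpha'[y_n]\ne\emptyset$ for every $z\in W$. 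By minimality of $(T,X)$ we have $\overline{Tx}=X$, so some $\sigma_n\in T$ satisfies $\sigma_nx\in W$, and the lower-semicontinuity conclusion produces $w_n\in t_n^{-1}[\sigma_nx]\cap\alpha'[y_n]\subseteq\alpha[y]$. The identity $t_nw_n=\sigma_nx$ is exact, and $\sigma_nx,t_nx\in\alpha[x]$; as $\alpha$ ranges over $\mathscr{U}_X$ this places $y$ in $D[x]$.

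For part (1), I will show $V[x]\subseteq U[x]$ via a diagonal construction. Given $x'\in V[x]$ with witnesses $t_nx\to y$, $x_n'\to x'$, $t_nx_n'=y$, minimality of $(T,X)$ yields a net $\{r_\beta\}_{\beta\in B}$ in $T$ with $r_\beta y\to x$. Over the product directed set $B\times\mathscr{U}_X$, for each $(\beta,\alpha)$ choose $n(\beta,\alpha)$ so large that both $r_\beta t_{n(\beta,\alpha)}x\in\alpha[r_\beta y]$ (possible since $r_\beta$ is continuous and $t_nx\to y$) and $x_{n(\beta,\alpha)}'\in\alpha[x']$. Setting $\tau_{(\beta,\alpha)}=r_\beta t_{n(\beta,\alpha)}\in T$ and $z_{(\beta,\alpha)}=x_{n(\beta,\alpha)}'$, the key identity $\tau_{(\beta,\alpha)}z_{(\beta,\alpha)}=r_\beta(t_{n(\beta,\alpha)}x_{n(\beta,\alpha)}')=r_\beta y$ is exact with both sides tending to $x$; meanwhile $\tau_{(\beta,\alpha)}x\to x$ (since it lies in $\alpha[r_\beta y]$ and $r_\beta y\to x$) and $z_{(\beta,\alpha)}\to x'$. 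Hence $x'\in U[x]=D[x]$. The main care points will be (i) using \emph{full} Hausdorff continuity of $t^{-1}$---upper semicontinuity alone (automatic from surjectivity) would not suffice for $U[x]\subseteq D[x]$---and (ii) packaging the diagonal in (1) via a product directed set rather than sequences.
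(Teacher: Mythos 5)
Your proof is correct, and for part (1) it takes a genuinely different route from the paper. The paper proves $V[x]\subseteq D[x]$ directly: it fixes $\varepsilon\in\mathscr{U}_X$, notes that $A=N_T(x,\varepsilon[x])$ is discretely syndetic by minimality, and invokes Lemma~\ref{lem5.5} to conclude that along a subnet $t_m^{-1}[t_nx]\subseteq A^{-1}Ax$ for $m<n$; since $x'\in\lim_m\lim_n t_m^{-1}[t_nx]$ (here the lower semicontinuity of $t_m^{-1}$ is used to realize $x_m'$ as a limit of points of $t_m^{-1}[t_nx]$), this puts $x'$ in $\mathrm{cls}_XA^{-1}Ax$, which is exactly membership in $D[x]$. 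You instead route $V[x]$ through $U[x]$ by a diagonal construction over a product directed set, pulling the limit point $y$ back toward $x$ with a net $r_\beta$ supplied by minimality and using the exactness of $t_nx_n'=y$ to keep the identity $\tau z=r_\beta y$ exact; this avoids the $\Delta^*$-set machinery of Lemma~\ref{lem5.5} entirely, at the cost of depending on the identification $D[x]=U[x]$ from part (2). For part (2) the two arguments coincide on $D[x]\subseteq U[x]$ and $U[x]\subseteq Q[x]$, but the paper dismisses $U[x]\subseteq D[x]$ with ``follows easily from the minimality,'' whereas you supply the actual mechanism: minimality alone only lets you approximate $t_ny_n$ by some $\sigma x$, and it is the \emph{lower} semicontinuity of $z\mapsto t_n^{-1}[z]$ (contained in the Hausdorff-continuity hypothesis, and not a consequence of surjectivity) that produces an exact preimage $w\in t_n^{-1}[\sigma x]$ near $y_n$. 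Your observation that upper semicontinuity would not suffice here is accurate, and this step is a genuine improvement in completeness over the paper's one-line assertion.
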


\begin{proof}
(1). Let $x\in X$ and $y\in V[x]$. Then there is a net $\{t_n\}$ in $T$ such that $y\in\lim_m\lim_nt_m^{-1}[t_nx]$. Let $\varepsilon\in\mathscr{U}_X$; then $A=N_T(x,\varepsilon[x])$ is discretely syndetic in $T$. Thus $A^{-1}A$ is a $\Delta^*$-set of $T$ by Lemma~\ref{lem5.5}. Obviously this shows that $y\in\textrm{cls}_XA^{-1}Ax$ and thus $y\in D[x]$.

(2). Let $y\in D[x]$, then there are nets $\{t_n\}, \{s_n\}$ in $T$ and $\{y_n\}$ in $X$ with $t_nx\to x$, $s_nx\to x$, $y_n\to y$, and $t_ny_n=s_nx$ by Definition~\ref{def5.4}. Then $t_n(x,y_n)=(t_nx,s_nx)\to (x,x)$. Thus $y\in U[x]$. On the other hand, $U[x]\subseteq D[x]$ follows easily from the minimality of $(T,X)$. Finally $U[x]\subseteq Q[x]$ is evident.
The proof is complete.
\end{proof}
\subsection{Two applications of Veech's relations}
\begin{0.14}
Let $(T,X)$ be a minimal flow. Then $(T,X)$ is an a.a. flow if and only if there exists a point $x\in X$ such that $Q[x]=\{x\}$.
\end{0.14}

\begin{proof}
Let $Q[x]=\{x\}$ for some point $x$ of $X$. Then $V[x]=\{x\}$ by Theorem~\ref{thm5.6} so that $x\in P_{\!aa}(T,X)$. Since $(T,X)$ is minimal, thus $(T,X)$ is an a.a. flow.

Conversely, assume $(T,X)$ is an a.a. flow. By Theorem~\ref{thm2.5} or by Theorem~\ref{0.13}, $(T,X)$ is an l.a.p. flow so $P(X)=Q(X)$ by Lemma~\ref{lem3.1}. Moreover, by Lemma~\ref{1.8}, $P[x]=\{x\}$ for some $x\in X$. Thus $Q[x]=\{x\}$. This proves Theorem~\ref{0.14}.
\end{proof}

Recall that $(X,T)$, the reflection of $(T,X)$, is defined by $(x,t)\mapsto xt=t^{-1}x$ (see Definition~\ref{sn0.1}). The following lemma is a generalization of \cite[(2) of Theorem~5.31 for right C-semigroups]{AD}.

\begin{lem}\label{lem5.7}
Let $(T,X)$ be invertible. Assume $T$ is an almost right C-semigroup. If $(T,X)$ is minimal, then $(X,T)$ is also minimal.
\end{lem}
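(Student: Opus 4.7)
The plan is to fix $x_0\in X$ and prove that $Y:=\mathrm{cls}_X T^{-1}x_0$ coincides with $X$, which is precisely the minimality of $(X,T)$. The strategy is to verify that $Y$ is a non\-empty closed $T$-invariant subset of $X$ in the original semiflow $(T,X)$, at which point the minimality of $(T,X)$ will immediately force $Y=X$.

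Non-emptiness is immediate from $x_0=e^{-1}x_0\in T^{-1}x_0\subseteq Y$. Furthermore, $Y$ is invariant under the reflection action in the sense that $t^{-1}Y\subseteq Y$ for every $t\in T$: the algebraic identity $t^{-1}s^{-1}x_0=(st)^{-1}x_0\in T^{-1}x_0$, combined with the continuity of the homeomorphism $x\mapsto t^{-1}x$ of $X$ (which exists because $(T,X)$ is invertible), extends the containment to the closure. The nontrivial task is thus to establish the forward $T$-invariance $TY\subseteq Y$, and it is here that the almost right C-semigroup hypothesis enters essentially.

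Let $G:=\{\tau\in T:\mathrm{cls}_T(T\setminus T\tau)\text{ is compact in }T\}$, which is dense in $T$ by assumption. I would first reduce the desired $tY\subseteq Y$ for all $t\in T$ to the same inclusion for $\tau\in G$: writing $t=\lim\tau_\alpha$ with $\tau_\alpha\in G$, joint continuity of $(s,x)\mapsto sx$ gives $\tau_\alpha y\to ty$ for each $y\in Y$, so once each $\tau_\alpha y$ lies in the closed set $Y$ then so does $ty$. Next, fix $\tau\in G$ and $y\in Y$ and choose a net $\{s_n\}\subset T$ with $s_n^{-1}x_0\to y$; partition the indices according to whether $s_n\in T\tau$ or $s_n\in T\setminus T\tau$, and pass to a cofinal subnet of one type. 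In the first case one can write $s_n=u_n\tau$ with $u_n\in T$, so that
\[
\tau\,s_n^{-1}x_0 \;=\; \tau(u_n\tau)^{-1}x_0 \;=\; u_n^{-1}x_0 \;\in\; T^{-1}x_0,
\]
and the continuity of $\tau$ together with the closedness of $Y$ yields $\tau y\in Y$. In the second case, the compactness of $\mathrm{cls}_T(T\setminus T\tau)$ allows the extraction of a further subnet with $s_n\to s^\ast\in T$, and the joint continuity of the reflection $(s,x)\mapsto s^{-1}x$ then forces $y=(s^\ast)^{-1}x_0\in T^{-1}x_0$ itself.

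The main obstacle is precisely this second subcase: one must still show $\tau y=\tau(s^\ast)^{-1}x_0\in Y$ even though $s^\ast\notin T\tau$, so the first-case manipulation is not directly available. I anticipate that the resolution exploits the density of $G$ a second time, namely by selecting an auxiliary $\tau'\in G$ with $s^\ast\in T\tau'$, applying the first-case argument to $\tau'$ to secure $\tau'y\in Y$, and then transferring back to the original $\tau$ by approximating $\tau$ with a net in $T\tau'$ and invoking joint continuity together with the closedness of $Y$ once more. Once $\tau Y\subseteq Y$ has been verified for every $\tau\in G$, the reduction described above yields $TY\subseteq Y$, and the minimality of $(T,X)$ then closes the argument, giving $Y=X$ and hence the minimality of $(X,T)$.
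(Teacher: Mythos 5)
Your overall frame---show that $Y:=\mathrm{cls}_XT^{-1}x_0$ is forward $T$-invariant and invoke minimality of $(T,X)$, reducing by density and joint continuity to $\tau$ in the set $G$ of elements with $\mathrm{cls}_T(T\setminus T\tau)$ compact, and then splitting the approximating net according to whether $s_n\in T\tau$---is sound as far as it goes, and your first subcase ($s_n=u_n\tau$, so $\tau s_n^{-1}x_0=u_n^{-1}x_0$) is exactly the computation the paper uses. But the second subcase is a genuine gap, and the patch you sketch does not close it. An auxiliary $\tau'\in G$ with $s^\ast\in T\tau'$ need not exist: already for $s^\ast=e$ (which occurs for the constant net $s_n\equiv e$ approximating $y=x_0$) it forces $\tau'$ to be left invertible in $T$, which fails for instance in $T=\mathbb{Z}_+$ except for $\tau'=e$, and with $\tau'=e$ the ``first-case argument'' only returns $y\in Y$. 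Worse, the ``transfer back'' step---approximating $\tau$ by elements $u\tau'$ of $T\tau'$ and passing from $\tau'y\in Y$ to $\tau y\in Y$---requires $u(\tau'y)\in Y$ for those $u\in T$, i.e.\ precisely the forward invariance of $Y$ under arbitrary elements of $T$ that you are trying to prove. The argument is circular, and the residual claim it would need ($\tau x_0\in\mathrm{cls}_XT^{-1}x_0$, say) is not accessible by these means.

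The paper avoids the bad case by changing the set: instead of $\mathrm{cls}_XT^{-1}x_0$ it works with the $\alpha$-limit set $\alpha_T(x_0)=\bigcap_{F}\mathrm{cls}_X\{t^{-1}x_0\,|\,t\in T\setminus F\}$, the intersection taken over compact $F\subseteq T$ (the case $T$ compact being handled separately and trivially). This set is nonempty by the finite intersection property, closed, and contained in $\mathrm{cls}_XT^{-1}x_0$; its defining feature is that every $y\in\alpha_T(x_0)$ can be approximated by $t^{-1}x_0$ with $t$ avoiding \emph{any} prescribed compact set, in particular avoiding $F\tau\cup\mathrm{cls}_T(T\setminus T\tau)$. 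That forces every approximating parameter into $T\tau$ (your first case) while simultaneously recording that the resulting point $\tau y$ is again approximated by $t^{-1}x_0$ with $t\notin F$ for every $F$, so $\tau y\in\alpha_T(x_0)$. Minimality of $(T,X)$ then gives $\alpha_T(x_0)=X$, hence $\mathrm{cls}_XT^{-1}x_0=X$. If you want to salvage your write-up, replacing $Y$ by $\alpha_T(x_0)$ is the missing idea.
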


\begin{proof}
First of all, if $T$ is compact, then the lemma is obviously true. Indeed, let $(T,X)$ be minimal and then for all $x,y\in X$, $Ty=\textrm{cls}_X{Ty}=X$ and so $ty=x$ for some $t\in T$. This implies that for all $x,y\in X$, $y=t^{-1}x=xt$ for some $t\in T$ and thus $\textrm{cls}_X{xT}=xT=X$ for every $x\in X$. Hence $(X,T)$ is minimal.

Now suppose $T$ is non-compact and that $(T,X)$ is minimal.
Since $G:=\{t\,|\,\textrm{cls}_T(T\setminus Tt)\textrm{ is compact in }T\}$ is dense in $T$, $\textrm{cls}_XTx=\textrm{cls}_XGx$ for all $x\in X$.

We show $(X,T)$ is minimal. For this, for $x\in X$, define the \textit{$\alpha$-limit set} of $x$ w.r.t. $(T,X)$ by
$\alpha_T(x)={\bigcap}_{F\in\mathscr{F}}\textrm{cls}_XxF^c$,
where $\mathscr{K}$ is the collection of compact subsets of $T$ and $F^c$ is the complement of $F$ in $T$. Clearly, $\alpha_T(x)$ is closed non-empty by the ``finite intersection property'' and $\alpha_T(x)\subseteq\textrm{cls}_XxT$.

We will show that $\alpha_T(x)$ is an invariant set of $(T,X)$. For this, let $y\in\alpha_T(x)$ and $s\in G$. Let $F\in\mathscr{K}$. Since $K:=Fs\cup\textrm{cls}_T(T\setminus Ts)$ is compact and $F^cs\supseteq K^c$ (for $K^c\subseteq Ts=(F\cup F^c)s$), then $y\in\textrm{cls}_X xF^cs$ so there is a net $\{t_n\}$ in $F^c$ such that
$xt_ns\to y$ and $xt_n\to z$.
Thus $zs=y$ and $z\in\textrm{cls}_XxF^c$. This shows that $sy=ys^{-1}\in\alpha_T(x)$. Thus $G\alpha_T(x)\subseteq\alpha_T(x)$ and then $T\alpha_T(x)\subseteq\alpha_T(x)$, i.e., $\alpha_T(x)$ is an invariant closed set of $(T,X)$.

However, since $(T,X)$ is minimal by hypothesis, $\alpha_T(x)=X$ for all $x\in X$. Thus $\textrm{cls}_XxT=X$ for all $x\in X$ and so $(X,T)$ is minimal. The proof is complete.
\end{proof}

\begin{0.17}
Let $(T,X)$ be an invertible minimal semiflow with $T$ an almost right C-semigroup and $x_0\in X$. Then the following two statements hold:
\begin{enumerate}
\item[$(a)$] $(X,T)$ is minimal.
\item[$(b)$] If $x_0$ is a distal l.a.p. point of $(T,X)$, then $x_0$ is an a.a. point of $(X,T)$.
\end{enumerate}
\end{0.17}

\begin{proof}
First of all, by Lemma~\ref{lem5.7}, $(X,T)$ is minimal.
Next by Lemma~\ref{lem0.6}, $(T,X)$ is l.a.p. and then it follows from Lemma~\ref{lem3.2} that $Q(X,T)\subseteq P(T,X)$. Since $x_0$ is a distal point of $(T,X)$, thus, for $(X,T)$, $Q[x_0]=\{x_0\}$. Then by Theorem~\ref{thm5.6}, it follows that $x_0$ is an a.a. point of $(X,T)$. The proof is complete.
\end{proof}

\subsection{Reflection principles and structure theorem of abelian semigroups}
Here the following Theorem~\ref{thm5.8} actually gives us three reflection principles. Here the ``abelian'' condition will be needed for (2) $\Rightarrow$ (3) or (b) $\Rightarrow$ (c) and (2) $\Leftrightarrow$ (b).

\begin{thm}\label{thm5.8}
Let $(T,X)$ be minimal invertible with $T$ abelian and $x\in X$. Then the followings are pairwise equivalent.
\begin{enumerate}
\item[$(1)$] $Q_{(T,X)}[x]=\{x\}$.
\item[$(\mathrm{a})$]$Q_{(X,T)}[x]=\{x\}$.
\item[$(2)$]$x\in P_{\!aa}(T,X)$.
\item[$(\mathrm{b})$]$x\in P_{\!aa}(X,T)$.
\item[$(3)$]$x$ is a distal and l.a.p. point of $(T,X)$.
\item[$(\mathrm{c})$]$x$ is a distal and l.a.p. point of $(X,T)$.
\end{enumerate}
Here $Q_{(T,X)}[x]$ and $Q_{(X,T)}[x]$ stand for the cells of $Q(T,X)$ and $Q(X,T)$ at $x$, respectively.
\end{thm}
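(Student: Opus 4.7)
The plan is to establish the cycle
$(1)\Rightarrow(2)\Rightarrow(3)\Rightarrow(\mathrm{a})\Rightarrow(\mathrm{b})\Rightarrow(\mathrm{c})\Rightarrow(1).$

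Two of the six arrows will be immediate. For $(1)\Rightarrow(2)$: if $t_nx\to y$ and $x_n'\to x'$ with $t_nx_n'=y$, setting $p_n:=x$ (constant in $n$) and $q_n:=x_n'$ gives $t_np_n=t_nx\to y$ and $t_nq_n=y\to y$, so $(x,x')\in Q(T,X)$; hypothesis $(1)$ then forces $x=x'$. The same pattern, read through the right action on $(X,T)$, yields $(\mathrm{a})\Rightarrow(\mathrm{b})$.

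I would establish the two ``upgrade'' arrows $(2)\Rightarrow(3)$ and $(\mathrm{b})\Rightarrow(\mathrm{c})$ via Theorem~\ref{thm2.5} combined with Lemma~\ref{1.8}, applied once to $(T,X)$ and once to its reflection $(X,T)$. In both applications $T$ is abelian, and the reflection inherits invertibility and minimality from $(T,X)$ --- the latter via the identity $\mathrm{cls}_XT^{-1}x=X$ for abelian invertible minimal $(T,X)$, which is cited in the proof of Theorem~\ref{thm2.5} from \cite{AD}.

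Finally, for $(3)\Rightarrow(\mathrm{a})$ I plan to use Lemma~\ref{lem0.6} (in its abelian incarnation) to propagate local almost periodicity from $x$ to every point of $X=\mathrm{cls}_XTx$, so that $(T,X)$ becomes an l.a.p.\ semiflow; Lemma~\ref{lem3.2} then gives $Q^{-}(T,X)\subseteq P(T,X)$. Combining this with the invertibility identity $Q^{-}(T,X)=Q(X,T)$ recorded in the excerpt and with distality $P_{(T,X)}[x]=\{x\}$ produces $Q_{(X,T)}[x]=\{x\}$, which is $(\mathrm{a})$. The reflected version of the same argument, using the double-reflection identity $Q^{-}(X,T)=Q(T,X)$, closes the cycle with $(\mathrm{c})\Rightarrow(1)$. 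The main obstacle is the silent minimality of $(X,T)$ required by the symmetric half of the cycle: without it, neither Theorem~\ref{thm2.5} nor Lemma~\ref{lem0.6} can be applied to the reflection, so the whole argument rests on the auxiliary reflection-minimality fact from \cite{AD} noted above.
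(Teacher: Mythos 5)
Your proof is correct and follows essentially the paper's own route: the same key ingredients appear (the elementary inclusion $V[x]\subseteq Q[x]$ for $(1)\Rightarrow(2)$ and $(\mathrm{a})\Rightarrow(\mathrm{b})$, Theorem~\ref{thm2.5} for the upgrade to distal l.a.p., and Lemma~\ref{lem0.6} together with Lemma~\ref{lem3.2} and the reflection identity $Q^{-}(T,X)=Q(X,T)$ to close the cycle), merely rearranged into a single six-term cycle rather than the paper's two overlapping cycles joined by the direct equivalence $(2)\Leftrightarrow(\mathrm{b})$ via Theorem~\ref{thm2.2}. Your explicit flagging of the minimality of the reflection $(X,T)$ for abelian $T$ addresses a hypothesis the paper's own proof uses only implicitly.
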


\begin{proof}$\,$

(1) $\Rightarrow$ (2) and (a) $\Rightarrow$ (b). This follows immediately from Theorem~\ref{thm5.6}.

(2) $\Rightarrow$ (3) and (b) $\Rightarrow$ (c). This follows immediately from Theorem~\ref{thm2.5}.

(2) $\Leftrightarrow$ (b). This follows at once from (1) of Theorem~\ref{thm2.2} and the fact that if $B$ is left syndetic in $T$ then it is also right syndetic in $T$.

(c) $\Rightarrow$ (1). By Lemma~\ref{lem0.6}, $(X,T)$ is l.a.p. with $x$ a distal point. Moreover by Lemma~\ref{lem3.2}, $P(X,T)\supseteq Q(T,X)$.
Further, since $x$ is a distal point of $(X,T)$, thus $Q_{(T,X)}[x]=\{x\}$ and so condition (1) holds.

Therefore, it holds that
\begin{equation*}
(1) \Rightarrow (2) \Leftrightarrow (\mathrm{b}) \Rightarrow (\mathrm{c}) \Rightarrow (1),
\end{equation*}
and symmetrically we can show that
\begin{equation*}
(\mathrm{a}) \Rightarrow (\mathrm{b}) \Leftrightarrow (2) \Rightarrow (3) \Rightarrow (\mathrm{a}).
\end{equation*}
This completes the proof of Theorem~\ref{thm5.8}.
\end{proof}

In fact, Veech's structure theorem also holds for $T$ in abelian semigroup as follows, which implies Theorem~\ref{thm4.5} when $T$ is abelian.

\begin{thm}\label{thm5.9}
Let $(T,X)$ be any semiflow. Then
\begin{enumerate}
\item[$(1)$] If $(T,X)$ is minimal with $T$ abelian such that $x\mapsto t^{-1}[x]$ is continuous for $t\in T$, then $(T,X)$ is a.a. if and only if $(T,X)$ is an almost 1-1 extension of an equicontinuous invertible semiflow.
\item[$(2)$] Let $(T,X)$ be an almost 1-1 extension of a minimal semiflow $(T,Y)$ via $\pi\colon X\rightarrow Y$. If $(T,X)$ is point-distal, then $\pi$ is 1-1 at every distal point of $(T,X)$.
\end{enumerate}
\end{thm}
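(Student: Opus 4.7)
The plan is to handle part (2) first by a direct distality argument and to reduce part (1) to machinery already developed in the paper, notably Theorems~\ref{thm3.6}, \ref{thm2.5}, \ref{thm5.8} and the chain of regional-proximal containments from the proof of Theorem~\ref{thm3.8}.

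For (2), fix a base point $x_0 \in X$ with $\pi^{-1}[\pi(x_0)]=\{x_0\}$ provided by the almost 1-1 hypothesis, and let $x$ be any distal point of $(T,X)$; I want to show $\pi^{-1}[\pi(x)]=\{x\}$. Given any $x' \in \pi^{-1}[\pi(x)]$, the minimality of $(T,X)$ (automatic since it is an almost 1-1 extension of a minimal semiflow) supplies a net $\{t_n\}\subset T$ with $t_n x' \to x_0$; passing to a subnet, $t_n x \to z$ for some $z\in X$. Continuity of $\pi$ and of the $T$-action give $\pi(z)=\lim t_n\pi(x)=\lim t_n\pi(x')=\pi(x_0)$, forcing $z=x_0$ by the single-fiber condition. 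Hence $t_n(x,x')\to(x_0,x_0)$, so $(x,x')\in P(X)$, and the distality of $x$ forces $x'=x$.

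For (1), the ``if'' direction is immediate from Theorem~\ref{thm3.6}. For the ``only if'' direction, assume $(T,X)$ is a.a., $T$ is abelian, and each $t^{-1}$ is continuous. Lemma~\ref{1.1} makes $(T,X)$ invertible; Theorem~\ref{thm2.5} shows any $x_0\in P_{\!aa}(T,X)$ is a distal l.a.p. point of $(T,X)$, and Lemma~\ref{lem0.6} (using $T$ abelian) propagates local almost periodicity to all of $X$. Theorem~\ref{thm5.8} then gives $Q_{(T,X)}[x_0]=\{x_0\}$ and identifies $x_0$ as a distal l.a.p. point of the reflection $(X,T)$. Granting pointwise local almost periodicity for $(X,T)$, Lemma~\ref{lem3.2} applied to both $(T,X)$ and $(X,T)$ yields the chain
\[
P(T,X)\supseteq Q^-(T,X)=Q(X,T)\supseteq P(X,T)\supseteq Q^-(X,T)=Q(T,X),
\]
so $P(T,X)=Q(T,X)$; abelianness of $T$ makes this relation $T$-invariant, and a standard argument (as in the proof of Theorem~\ref{thm3.8}) shows $Q(T,X)$ is a closed invariant equivalence relation with equicontinuous invertible quotient $(T,X/Q(T,X))$. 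The projection $X\to X/Q(T,X)$ is then the desired almost 1-1 extension since $Q[x_0]=\{x_0\}$.

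The main obstacle is verifying that the reflection $(X,T)$ is pointwise l.a.p., which reduces to showing $(X,T)$ is minimal so that Lemma~\ref{lem0.6} applied to $(X,T)$ extends local almost periodicity from $x_0$ to all of $X$. Lemma~\ref{lem5.7} supplies this under an almost right C-semigroup hypothesis but not automatically for abelian $T$; I expect to establish it by adapting the $\alpha$-limit-set argument of Lemma~\ref{lem5.7}, replacing the C-semigroup ingredient with the continuity of each $t^{-1}$ and the abelianness of $T$ to exhibit $(X,T)$-orbit closures as invariant closed subsets of the minimal semiflow $(T,X)$. If this step proves delicate, an alternative is to bypass $(X,T)$-minimality entirely by working directly with the maximal equicontinuous structure relation $S_{\mathit{eq}}$ and verifying $S_{\mathit{eq}}[x_0]=Q[x_0]=\{x_0\}$ in the spirit of Proof~I of Theorem~\ref{thm3.6}.
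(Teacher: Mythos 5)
Your proposal is correct and follows essentially the same route as the paper's proof: part (2) by driving the fibre $\pi^{-1}[\pi(x)]$ into the singleton fibre over $\pi(x_0)$ to get proximality and then invoking distality of $x$, and part (1) via Theorem~\ref{thm3.6} for the ``if'' direction and, for ``only if'', the chain $P(X,T)\supseteq Q(T,X)\supseteq P(T,X)\supseteq Q(X,T)$ obtained from Theorem~\ref{thm5.8} and Lemma~\ref{lem3.2}, which makes $P(T,X)=Q(T,X)$ a closed invariant equivalence relation with equicontinuous invertible quotient. The one step you leave open---pointwise local almost periodicity of the reflection $(X,T)$, which reduces to minimality of $(X,T)$---is exactly the fact $X=\mathrm{cls}_XTx=\mathrm{cls}_XT^{-1}x$ for minimal invertible abelian semiflows that the paper cites from \cite{AD} in the proof of Theorem~\ref{thm2.5}(a) and uses implicitly inside Theorem~\ref{thm5.8}, so you may simply invoke it rather than adapt the $\alpha$-limit-set argument of Lemma~\ref{lem5.7}.
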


\begin{proof}
(1): The ``if'' part is evident by Theorem~\ref{thm3.6}. So we only show the ``only if'' part and we can assume by Lemma~\ref{1.1} that $(T,X)$ is an a.a. minimal invertible semiflow. By Theorem~\ref{thm5.8}, $(T,X)$ and $(X,T)$ both are l.a.p. semiflows. Thus by Lemma~\ref{lem3.2}, it follows that
$$
P(X,T)\supseteq Q(T,X)\supseteq P(T,X)\supseteq Q(X,T).
$$
Whence $P(T,X)=Q(T,X)$ is a closed invariant equivalent relation on $X$. This implies that $\pi\colon (T,X)\rightarrow(T,X/Q(X))$ is of almost $1$-$1$ type such that $(T,X/Q(X))$ is an equicontinuous invertible semiflow.

(2): Let $y_0\in Y$ such that $\pi^{-1}[y_0]=\{x_0\}$. Let $x$ be a distal point of $(T,X)$ and set $y=\pi(x)$. Take $x_1,x_2\in\pi^{-1}[y]$. Since $(T,Y)$ is minimal and $z\mapsto\pi^{-1}[z]$ is upper-semi continuous, there is a net $\{t_n\}$ in $T$ such that $t_n(x_1,x_2)\to(x_0,x_0)$. Then by distality of $(T,X)$ at $x$, $x_1=x_2$. Thus $\pi$ is 1-1 at $x$; that is, $\pi^{-1}[\pi(x)]=\{x\}$. The proof is complete.
\end{proof}
\subsection{The density of $V(T,X)$ in $D(T,X)$}
Next we will show that $V(T,X)$ is dense in $D(T,X)$ for all minimal invertible semiflow $(T,X)$. If $T$ is an abelian group, this was proved by Veech~(cf.~\cite[Theorem~1.2]{V68}).

We first consider a special case as follows. Let $G=\langle T\rangle$ and let $X=H_T(\xi)$ which is the pointwise closure of the $T$-translates of a given bounded vector-valued function $\xi$ (valued in the $d$-dimensional complex-space $\mathbb{C}^d$) on $G$ and $(T,X)$ be as in $\S$\ref{sec0.1}.3. Moreover, $(T,X)$ is 1-1 here; i.e., $x\mapsto tx$ is 1-1 for $t\in T$. Then:

\begin{lem}\label{lem5.10}
Let $(T,X)$ be minimal where $X=H_T(\xi)$. Then:
\begin{enumerate}
\item[$(1)$] Given $x\in X, x^\prime\in D[x]$, and a finite set $S\subseteq T$, there is a net $\{\beta_i\}$ in $T$ such that $\lim_i\beta_ix$ and $\lim_j\lim_i\beta_j^{-1}\beta_ix=x^{\prime\prime}$ exist with $x^{\prime\prime}(s)=x^\prime(s)$ for all $s\in S$. Hence $V[x]$ is dense in $D[x]$ for all $x\in X$.
\item[$(2)$] If $T$ is countable, then $V[x]=D[x]$ for all $x\in X$.
\end{enumerate}
\end{lem}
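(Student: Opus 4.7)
I will proceed from the net-characterization of $D[x]$ supplied by Theorem~\ref{thm5.6}(2): for $x'\in D[x]$ we can select nets $\{t_n\},\{s_n\}$ in $T$ and $\{y_n\}$ in $X$ with $t_nx\to x$, $s_nx\to x$, $y_n\to x'$ (pointwise), and $t_ny_n=s_nx$. Because each $t_n\colon X\to X$ is injective and $X=H_T(\xi)\subseteq L^\infty(G)$ with $G$ acting by right translation, the relation $t_ny_n=s_nx$ identifies $y_n$ with the element $t_n^{-1}s_nx$ of $L^\infty(G)$, producing the key identity
\begin{equation*}
y_n(g)=x(g\,t_n^{-1}s_n)\qquad(g\in G),
\end{equation*}
so in particular $y_n(s)=x(s\,t_n^{-1}s_n)\to x'(s)$ for every $s\in S\subseteq T$.

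\textbf{Key construction.} If $T$ were a group, one could take $\beta_n=s_n^{-1}t_n\in T$ and read the claim off directly, as in Veech~\cite{V68}. Here $s_n^{-1}t_n$ need not lie in $T$, and this ``virtual'' choice has to be approximated by genuine elements of $T$ using minimality. Since $(T,X)$ is minimal and $y_n\in X=\overline{Tx}$ in the pointwise topology, for each $n$ and any prescribed finite $F_n\subseteq G$ we can choose $\sigma_n\in T$ with $|\sigma_nx(g)-y_n(g)|<1/n$ for all $g\in F_n$. I would build $\{\beta_i\}$ recursively by setting $\beta_n:=\sigma_n$ with $F_n$ enlarged at stage $n$ to contain $S$ together with all translates $\{s\beta_j^{-1}:s\in S,\ j<n\}$ produced by the previously chosen $\beta_j$'s, and then passing to a subnet so that $\beta_ix\to y$ pointwise in $X$ for some $y\in X$ with $y(s)=x'(s)$ on $S$. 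Additionally, by choosing the tail of the net inside $\beta_j T$ for each $j$, each product $\beta_j^{-1}\beta_i$ can be realized as an element of $T$, keeping $\beta_j^{-1}\beta_ix$ inside $X$.

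\textbf{Computing the iterated limit.} For fixed $j$, continuity of right translation by $\beta_j^{-1}\in G$ on $L^\infty(G)$ gives
\begin{equation*}
\lim_i\beta_j^{-1}\beta_ix=\beta_j^{-1}y,\qquad (\beta_j^{-1}y)(g)=y(g\beta_j^{-1}),
\end{equation*}
pointwise. Because $s\beta_j^{-1}\in F_n$ for $n$ large, the approximation $\sigma_nx\to y$ yields $y(s\beta_j^{-1})=\lim_n y_n(s\beta_j^{-1})=\lim_n x(s\beta_j^{-1}t_n^{-1}s_n)$, and by the diagonal selection of the $\beta_j$'s this outer limit is arranged to tend to $x'(s)$ as $j\to\infty$, giving $x''(s)=x'(s)$ on $S$. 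Since each inner limit $z_j:=\beta_j^{-1}y$ lies in $X$ (by the subordination condition $\beta_i\in\beta_j T$ for $i$ large) and satisfies $\beta_jz_j=y$, the triple $(\beta_j,\,y,\,z_j)$ witnesses $x''\in V[x]$ with $z_j\to x''$. As $S$ ranges over finite subsets of $T$, this gives a net in $V[x]$ approximating $x'$ pointwise, establishing $V[x]\subseteq\overline{V[x]}\supseteq D[x]$, i.e.\ density.

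\textbf{Main obstacle and Part~(2).} The principal difficulty is the simultaneous balancing of two competing requirements on $\{\beta_i\}$: the outer limit $\beta_ix\to y$ must exist in $X$, while the ``virtual inverses'' $\beta_j^{-1}y$ must converge to $x'$ on $S$, even though the algebraically natural choice $s_n^{-1}t_n$ lies outside $T$. The recursive/diagonal construction resolves this by exploiting minimality of $(T,X)$ to substitute $s_n^{-1}t_n$ by an element of $T$ having the same effect on $x$ at the prescribed finite set of evaluations, while the subordination condition $\beta_i\in\beta_j T$ keeps the terms inside $X$. For Part~(2), if $T$ is countable then so is $G=\langle T\rangle$; enumerate it and apply Part~(1) with finite sets $S_k$ increasing to $T$, and, by further enlargement at each stage, to all of $G$. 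A Cantor diagonal over these approximations produces a single sequence $\{\beta_i\}\subseteq T$ for which $x''=x'$ as elements of $L^\infty(G)$; the realization of $x''$ as a limit of the $z_j$'s then places $x'\in V[x]$, and combined with $V[x]\subseteq D[x]$ this gives $V[x]=D[x]$.
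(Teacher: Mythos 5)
There is a genuine gap, and it sits exactly at the step you wave through with ``by the diagonal selection of the $\beta_j$'s this outer limit is arranged to tend to $x'(s)$.'' Your construction chooses $\beta_n=\sigma_n$ so that $\sigma_nx$ approximates $y_n\to x'$; hence $y=\lim_i\beta_ix$ agrees with $x'$ on $S$. But then the quantity you must control is $\lim_j\beta_j^{-1}y$, and since $\beta_j$ was chosen to carry $x$ close to $x'\approx y$, the point $\beta_j^{-1}y$ sits near $\beta_j^{-1}(\beta_jx)=x$, not near $x'$. Concretely, $y(s\beta_j^{-1})=\lim_nx(s\beta_j^{-1}t_n^{-1}s_n)$ differs from the known limit $\lim_nx(st_n^{-1}s_n)=x'(s)$ by the inserted factor $\beta_j^{-1}$, and nothing in your selection forces $\beta_j^{-1}$ to act almost trivially at those evaluation points --- indeed it cannot, because $\beta_j$ moves $x$ all the way to $x'$. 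So the double limit your net produces tends back toward $x$ (which only witnesses the trivial fact $x\in V[x]$), and no amount of enlarging the finite sets $F_n$ or imposing the subordination $\beta_i\in\beta_jT$ repairs this; the latter condition, required for all $j<i$ simultaneously, would moreover force a nested choice $\beta_i\in\beta_{i-1}T$ that competes with the approximation requirement, and it is not needed anyway since $\beta_j^{-1}$ already acts on $X=H_T(\xi)$ by $g\mapsto g\beta_j^{-1}$ in $\langle T\rangle$.

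The paper's proof is built precisely to avoid this trap. It takes from Definition~\ref{def5.4} \emph{pairs} $\sigma_n,\tau_n\in N_T(x,\delta_n[x])$ --- elements that nearly \emph{fix} $x$ --- with $\tau_n^{-1}\sigma_nx$ close to $x'$, and forms the telescoping products $\alpha_n=\sigma_1\dotsm\sigma_{n-1}\tau_n\in T$. Because every factor nearly fixes $x$ at the positions recorded in the inductively enlarged finite sets $F_n$ (which must be closed under the relevant words $\tau_1^{\epsilon_1}\sigma_1^{\epsilon_1'}\dotsm\tau_n^{\epsilon_n}\sigma_n^{\epsilon_n'}$, not merely contain $S$ and some translates), the limit $y=\lim\alpha_nx$ is $\rho$-close to $x$, while the cancellation $\alpha_m^{-1}\alpha_n=\tau_m^{-1}\sigma_m\dotsm\sigma_{n-1}\tau_n$ leaves a single inverse applied to an element of $T$ and gives $\alpha_m^{-1}y\approx\tau_m^{-1}\sigma_mx\approx x'$, with errors summable via $\sum\delta_n<\infty$. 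That asymmetry --- $y$ near $x$, the pulled-back limit near $x'$ --- is the whole content of the lemma, and it is absent from your construction. Your outline of Part~(2) inherits the same defect; the paper handles it not by a Cantor diagonal over the outputs of Part~(1) (the iterated limits do not diagonalize so simply) but by rerunning the same telescoping argument with $\rho$ replaced by a genuine metric on $X$, available because $G=\langle T\rangle$ is countable.
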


\begin{proof}
(1). First, for $\delta>0$ and $z,w\in X$, we shall say that $(z,w)\in\delta$ or $\rho(z,w)<\delta$ if and only if $\|z(s)-w(s)\|<\delta$ for all $s\in S$.
Let $\{\delta_n\}_{n=1}^\infty$ be a sequence of positive numbers such that $\sum_n\delta_n<\infty$.
If $x^\prime\in D[x]$, then by Definition~\ref{def5.4} there exist, for each $n$ and $\epsilon>0$, elements $\sigma,\tau\in N_T(x,\delta_n[x])$ such that $(\tau^{-1}\sigma x,x^\prime)\in\epsilon$. Moreover, if $F$ is any finite subset of $\langle T\rangle$ it can also be arranged that $(s\gamma x,sx)\in\delta_n\ \forall s\in F$, for $\gamma=\sigma$ and $\tau$.

Since $(T,X)$ is minimal, then $N_T(x,\delta[x])$ is syndetic in $T$ for all $\delta>0$. We can select a sequence $(\sigma_1,\tau_1), (\sigma_2,\tau_2), \dotsc$ inductively as follows. First we choose $\sigma_1,\tau_1\in N_T(x,\delta_1[x])$ and $F_0=\{e\}$ with $\rho(\tau_1^{-1}\sigma_1x,x^\prime)<\delta_1$. Having chosen $(\sigma_1,\tau_1), \dotsc, (\sigma_n,\tau_n)$ let $F_n$ be the finite set of elements of $\langle T\rangle$ which are representable as
$$
t=\tau_1^{\epsilon_1}\sigma_1^{\epsilon_1^\prime}\dotsm\tau_n^{\epsilon_n}\sigma_n^{\epsilon_n^\prime}\quad \textrm{where }\epsilon_i=0\textrm{ or }-1\textrm{ and } \epsilon_i^\prime=0\textrm{ or }1.
$$
Then choose $\sigma_{n+1},\tau_{n+1}\in N_T(x,\delta_{n+1}[x])$ in such a way that
\begin{enumerate}
\item[(a)] $\rho(s\gamma x,sx)<\delta_{n+1}\ \forall s\in F_n$, where $\gamma\in\{\sigma_{n+1},\tau_{n+1}\}$; and
\item[(b)] $\rho(\tau_{n+1}^{-1}\sigma_{n+1}x,x^\prime)<\delta_{n+1}$.
\end{enumerate}
Based on the sequence $\{(\sigma_n,\tau_n)\}$, we define $\alpha_1=\tau_1, \alpha_2=\sigma_1\tau_2$ in $T$, and in general,
$$
\alpha_n=\sigma_1\dotsm\sigma_{n-1}\tau_n\in T\quad (n=2,3,\dotsc).
$$
If $m<n$ we have from (a) that
\begin{equation*}
\rho(\alpha_mx,\alpha_nx)\le\sum_{j=0}^{n-m-1}\rho(\alpha_{m+j}x,\alpha_{m+j+1}x)
\le\sum_{j=0}^{n-m-1}(2\delta_{m+j}+\delta_{m+j+1})
\end{equation*}
tends to $0$ as $m\to\infty$. Thus $\lim_n\alpha_nx$ exists under $\rho$ because $\sum_{n=1}^{\infty}\delta_n<\infty$. Letting $y$ be the $\rho$-limit, we now claim $\lim_{m\to\infty}\alpha_m^{-1}y=x^\prime$ under $\rho$. To see this we note that if $n>m$, then
\begin{equation*}
\alpha_m^{-1}\alpha_n=\tau_m^{-1}\sigma_{m-1}^{-1}\dotsm\sigma_1^{-1}\sigma_1\dotsm\sigma_{n-1}\tau_n=\tau_m^{-1}\sigma_m\dotsm\sigma_{n-1}\tau_n
\end{equation*}
and therefore by argument as above, for $n>m$,
$$
\rho\left(\alpha_m^{-1}\alpha_nx,\tau_m^{-1}\sigma_mx\right)\le\sum_{k=m+1}^n\delta_k\to 0\quad \textrm{as }m\to\infty.
$$
Thus by (b),
\begin{equation*}
\lim_{m\to\infty}\rho\left(\alpha_m^{-1}y,x^\prime\right)=\lim_{m\to\infty}\lim_{n\to\infty}\rho\left(\alpha_m^{-1}\alpha_n x,x^\prime\right)=0
\end{equation*}
and $\alpha_m^{-1}y\to x^\prime$ under $\rho$. By choosing a subnet $\{\beta_i\}$ from the sequence $\{\alpha_n\}$ in $T$, there are points $z,x^{\prime\prime}\in X$ such that under the pointwise topology,
$\beta_ix\to z$, $\lim_j\lim_i\beta_j^{-1}\beta_ix=x^{\prime\prime}$;
and moreover, $x^{\prime\prime}(s)=x^\prime(s)\, \forall s\in S$.
Therefore we have concluded the statement (1) of Lemma~\ref{lem5.10}.

(2). Since $T$ is countable, so $G$ is also countable. Thus $X$ is naturally metrizable. Now use the metric of $X$ in place of $\rho$ in the above arguments, we can conclude the statement (2) of Lemma~\ref{lem5.10}.
\end{proof}

We now return to the general invertible semiflow case with arbitrary compact $\textrm{T}_2$ phase space $X$. Theorem~\ref{thm5.11} below implies $D[x]\subseteq Q[x]$ (cf.~(2) of Theorem~\ref{thm5.6}).

\begin{thm}[{cf.~\cite[Theorem~1.2]{V68} for $T$ an abelian group}]\label{thm5.11}
Let $(T,X)$ be minimal invertible. Then:
\begin{enumerate}
\item[$(1)$] $V[x]$ is a dense subset of $D[x]$ for each $x\in X$.
\item[$(2)$] If $X$ is metrizable, then $V[x]=D[x]$.
\end{enumerate}
\end{thm}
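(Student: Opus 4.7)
The plan is to extend the argument of Lemma~\ref{lem5.10}---which proves the special case $X=H_T(\xi)$ using the explicit pseudo-metrics $\rho(z,w)=\max_{s\in S}\|z(s)-w(s)\|$---to an arbitrary compact $\textrm{T}_2$ phase space, by running the same inductive scheme relative to continuous pseudo-metrics drawn from the Kelley family $\Sigma$ that generates $\mathscr{U}_X$, in the spirit of the proof of Theorem~\ref{1.10}. Since $V[x]\subseteq D[x]$ was established in Theorem~\ref{thm5.6}(1), part~(1) reduces to proving $D[x]\subseteq\mathrm{cls}_X V[x]$, and part~(2) will drop out by specializing to a compatible metric.

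Fix $x\in X$, $x'\in D[x]$, and $\varepsilon\in\mathscr{U}_X$. First I would choose $d\in\Sigma$ and $\epsilon_0>0$ with $\{(y,z):d(y,z)<3\epsilon_0\}\subseteq\varepsilon$, and pick a summable sequence of positive numbers $\{\delta_n\}$ with $\sum_n\delta_n<\epsilon_0$. By minimality and invertibility of $(T,X)$, the level sets $N_T(x,\{y:d(x,y)<\delta\})$ are syndetic in $T$ for every $\delta>0$, and each element of $\langle T\rangle$ acts as a homeomorphism of $X$. Unpacking Definition~\ref{def5.4} in the invertible case, $x'\in D[x]$ says that for every $\alpha\in\mathscr{U}_X$ and every neighborhood $W$ of $x'$ there exist $\sigma,\tau\in N_T(x,\alpha[x])$ with $\tau^{-1}\sigma x\in W$. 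Inductively, at stage $n$ I would produce a pair $\sigma_{n+1},\tau_{n+1}\in N_T(x,\{y:d(x,y)<\delta_{n+1}\})$ subject to (a)~$d(s\gamma x,sx)<\delta_{n+1}$ for every $s$ in the finite set $F_n\subset\langle T\rangle$ enumerating the products $\tau_1^{\epsilon_1}\sigma_1^{\epsilon_1'}\cdots\tau_n^{\epsilon_n}\sigma_n^{\epsilon_n'}$ (with $\epsilon_i\in\{0,-1\}$, $\epsilon_i'\in\{0,1\}$) and $\gamma\in\{\sigma_{n+1},\tau_{n+1}\}$, and (b)~$d(\tau_{n+1}^{-1}\sigma_{n+1}x,x')<\delta_{n+1}$. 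Joint continuity of the finitely many homeomorphisms $s\in F_n$ lets me shrink the level-set threshold at each stage so that (a) is automatic for any $\gamma$ in a sufficiently small level set of $d(x,\cdot)$, and (b) is then arranged by applying the defining property of $D[x]$ inside that level set.

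Setting $\alpha_n=\sigma_1\cdots\sigma_{n-1}\tau_n\in T$, the telescoping computations from Lemma~\ref{lem5.10} bound both $d(\alpha_mx,\alpha_nx)$ and $d(\alpha_m^{-1}\alpha_nx,\tau_m^{-1}\sigma_mx)$ for $n>m$ by tails of $\sum_k\delta_k$. By compactness of $X$, I would pass to a subnet $\{\beta_i\}\subseteq\{\alpha_n\}$ with $\beta_ix\to y$ in $X$, and then to a further subnet $\{\beta_j\}\subseteq\{\beta_i\}$ with $\beta_j^{-1}y\to z$ in $X$; continuity of $d$ propagates the estimates together with (b) to yield $d(z,x')=0<3\epsilon_0$, hence $(z,x')\in\varepsilon$. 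Since $\beta_jx\to y$, $\beta_j^{-1}y\to z$, and $\beta_j(\beta_j^{-1}y)=y$, the point $z$ lies in $V[x]$, so $V[x]\cap\varepsilon[x']\neq\emptyset$ and (1) follows. For (2), when $X$ is metrizable I would choose $d$ to be a compatible metric at the outset; then $d(z,x')=0$ forces $z=x'$, so $x'\in V[x]$ itself and $V[x]=D[x]$.

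The chief obstacle is running the inductive step cleanly under the weaker hypothesis of a non-metrizable compact $\textrm{T}_2$ phase space: at stage $n$ one must simultaneously satisfy the continuity constraint (a) against the growing finite set $F_n\subset\langle T\rangle$ and the approximation constraint (b) within a single syndetic subset of $T$, and all estimates are read in the fixed pseudo-metric $d$, which only separates some pairs of points of $X$. Consequently one must invoke compactness and subnet extraction twice to convert the $d$-Cauchy bookkeeping of $\{\alpha_nx\}$ and $\{\alpha_n^{-1}y\}$ into genuine limits $y,z\in X$, after which the invertibility of $(T,X)$ makes the identification of $z$ as a point of $V[x]$ automatic.
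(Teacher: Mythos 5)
Your proof is correct in substance but follows a genuinely different route from the paper's. The paper keeps the inductive construction confined to the function-hull model: it proves Lemma~\ref{lem5.10} for $X=H_T(\xi)$, where the pseudo-metrics $\rho(z,w)=\max_{s\in S}\|z(s)-w(s)\|$ are canonical, and then obtains the general case by writing $X$ as the inverse limit of the factors $X_\alpha=\{f_{\alpha,x}\,|\,x\in X\}$ attached to finite tuples $F_\alpha$ of continuous functions, pushing $V$ and $D$ forward through the epimorphisms $\pi_\alpha$ and recovering $\mathrm{cls}_XV[x_0]=D[x_0]$ from $\pi_\alpha\bigl(\mathrm{cls}_XV[x_0]\bigr)=D_\alpha[\pi_\alpha(x_0)]$; part $(2)$ is then handled separately by rerunning the construction with a metric. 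You instead run the Lemma~\ref{lem5.10} induction directly on $X$ against a continuous pseudo-metric $d$ from the Kelley family generating $\mathscr{U}_X$ --- exactly the device the paper uses in proving Theorem~\ref{1.10} --- which makes the argument self-contained, avoids the inverse-limit bookkeeping, and yields $(1)$ and $(2)$ from a single construction. The one point you must tighten: because $d$ is only a pseudo-metric, $d(x,\gamma x)$ small does \emph{not} force $\gamma x$ to be topologically near $x$, so condition (a) is not ``automatic for any $\gamma$ in a sufficiently small level set of $d(x,\cdot)$''; at stage $n$ you should instead take $\sigma_{n+1},\tau_{n+1}\in N_T(x,\alpha[x])$ for an entourage $\alpha\in\mathscr{U}_X$ chosen (by continuity of the finitely many homeomorphisms in $F_n$ and of $d$) so that $w\in\alpha[x]$ implies $d(sw,sx)<\delta_{n+1}$ for all $s\in F_n$ --- which the definition of $D[x]$ permits, since it quantifies over all entourages and all neighborhoods of $x'$. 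With that adjustment the telescoping estimates, the two subnet extractions, and the identification of the limit $z\in V[x]$ with $d(z,x')=0$ all go through, and the metrizable case follows by taking $d$ to be a compatible metric.
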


\begin{proof}
(1): First by (1) of Theorem~\ref{thm5.6}, $V[x]\subseteq D[x]$ for all $x\in X$. Next, let $x_0\in X$ be any fixed.
We denote by $\{F_\alpha\}$ the collection of $n$-tuples $(n=n_\alpha)$ of continuous functions on $X$. Each $F_\alpha$ may be considered a vector-valued function on $X$, and if we define $f_{\alpha,x_0}(s)=F_\alpha(sx_0)$ for all $s\in\langle T\rangle$, the closure, $X_\alpha$, of the $T$-translates of $f_{\alpha,x_0}$ enjoys the properties of the space $X$ of Lemma~\ref{lem5.10} such that $X_\alpha=\{f_{\alpha,x}\,|\,x\in X\}$ where $f_{\alpha,x}(s)=F_\alpha(sx)$ for all $s\in\langle T\rangle$ and $x\in X$.
Let $\pi_\alpha\colon X\rightarrow X_\alpha$ be the natural mapping $x\mapsto f_{\alpha,x}$. This is an epimorphism from $(T,X)$ onto $(T,X_\alpha)$, since $tf_{\alpha,x}=f_{\alpha,tx}$ for all $t\in T$ and $x\in X$. So $(T,X_\alpha)$ is minimal. We denote by $D_\alpha$ and $V_\alpha$ the sets which play in $X_\alpha$ the role of $D$ and $V$. It is obvious that $\pi_\alpha(V[x_0])=V_\alpha[\pi_\alpha(x_0)]$, and therefore by Lemma~\ref{lem5.10} if $V_0[x_0]\subseteq D[x_0]$ is the closure of $V[x_0]$, then $\pi_\alpha(V_0(x_0))=D_\alpha[\pi_\alpha(x_0)]$. Now $X$ is the inverse limit of the inverse system $\{X_\alpha\}$ (indexed by the set of finite subsets of $C(X)$ directed by inclusion), and since $\pi_\alpha(D[x_0])\subseteq D_\alpha[\pi_\alpha(x_0)]=\pi_\alpha(V_0[x_0])\subseteq\pi_\alpha(D[x_0])$, $D[x_0]$ is the inverse limit of the sets $D_\alpha[\pi_\alpha(x_0)]$. It follows that $V_0[x_0]=D[x_0]$. Thus (1) has been proved.

(2): Using the construction of the proof of Lemma~\ref{lem5.10} we can easily see that $D[x]\subseteq V[x]$ if $X$ is a compact metric space.
The proof of Theorem~\ref{thm5.11} is therefore complete.
\end{proof}

It should be mentioned that when $(T,X)$ is a \textit{flow admitting an invariant probability measure}, our Theorem~\ref{thm5.11} is actually \cite[Theorems~13 and 16]{AGN} by different approaches.
Moreover, when $T$ is countable, we can easily obtain the following result.

\begin{thm}\label{thm5.12}
Let $(T,X)$ be minimal invertible with $T$ a countable semigroup. Then $V[x]=D[x]$ for each $x\in X$.
\end{thm}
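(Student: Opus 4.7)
The plan is to combine the inverse-limit strategy from the proof of Theorem~\ref{thm5.11}(1) with Lemma~\ref{lem5.10}(2), using the countability of $T$ as the crucial bridge. Fix $x\in X$. Exactly as in the proof of Theorem~\ref{thm5.11}(1), for each finite $\alpha\subset C(X)$ form the factor space $X_\alpha=H_T(f_{\alpha,x})\subseteq(\mathbb{C}^{n_\alpha})^{\langle T\rangle}$ with the pointwise topology and the canonical epimorphism $\pi_\alpha\colon(T,X)\to(T,X_\alpha)$, so that $X$ is the inverse limit of $\{X_\alpha\}$. Since $T$ is countable, so is $\langle T\rangle$, and hence each $X_\alpha$ sits inside a countable product of copies of $\mathbb{C}^{n_\alpha}$ and is therefore metrizable. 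Lemma~\ref{lem5.10}(2) now applies factorwise and yields the actual equality
\[
V_\alpha[\pi_\alpha(x)]=D_\alpha[\pi_\alpha(x)]\quad\text{for every finite }\alpha\subset C(X),
\]
rather than the mere density given by Lemma~\ref{lem5.10}(1) that was used in the proof of Theorem~\ref{thm5.11}(1).

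The inclusion $V[x]\subseteq D[x]$ is already contained in Theorem~\ref{thm5.11}(1). For the reverse inclusion, given $y\in D[x]$, I would re-run the recursive construction from the proof of Lemma~\ref{lem5.10}(2) directly inside $X$. At inductive step $n$, exploit the definition of $D[x]$ to choose $(\sigma_n,\tau_n)\in T^2$ with $\sigma_n,\tau_n\in N_T(x,\delta_n[x])$ and $\tau_n^{-1}\sigma_n x$ close to $y$, while also enforcing $s\gamma x$ close to $sx$ for every $s$ in a growing finite set $F_n\subset\langle T\rangle$ and $\gamma\in\{\sigma_n,\tau_n\}$, with all distances measured in a chosen continuous pseudo-metric $d_n$ on $X$. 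Setting $a_n=\sigma_1\cdots\sigma_{n-1}\tau_n$, the triangle-inequality bookkeeping of the proof of Lemma~\ref{lem5.10}(1) shows that $\{a_n x\}$ is Cauchy in each $d_n$ and that $\{a_n^{-1}z\}\to y$ in $d_n$ for any cluster point $z$ of $\{a_n x\}$ in the compact space $X$. Passing to a subnet with $a_n x\to z$ in $X$ then exhibits $y\in V[x]$, provided the family $\{d_n\}$ ultimately captures the topology near $\{y,z\}$.

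The main obstacle is precisely this last proviso: the recursion requires a countable cofinal family of continuous pseudo-metrics on $X$, which need not exist when $X$ is non-metrizable. The escape route combines the factorwise equality of the first paragraph with the countability of $T$. Indeed, the separation of $y$ from other candidate limit points in $X$ is, via the inverse-limit realization $X=\varprojlim X_\alpha$, tested on countably many continuous functions at a time, and each such test lives inside some metrizable factor $X_\alpha$ where Lemma~\ref{lem5.10}(2) gives equality, not just density. By enlarging the working $\alpha$ along the inductive construction---legitimate because at each step only finitely many elements of the countable semigroup $T$ need to be placed---and diagonalizing, one obtains a single sequence $\{a_n\}\subset T$ whose subnet-limit in $X$ and associated inverse-limit data realize $y$ as a Veech partner of $x$, completing the proof.
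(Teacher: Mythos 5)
Your first paragraph is exactly the route the paper itself indicates: the paper's proof of this theorem is only a pointer to ``the argument of Theorem~\ref{thm5.11} using (2) of Lemma~\ref{lem5.10}'', i.e.\ form the factors $X_\alpha=H_T(f_{\alpha,x})$, observe that countability of $T$ makes $\langle T\rangle$ countable and hence each $X_\alpha$ metrizable, and apply Lemma~\ref{lem5.10}(2) to get $V_\alpha[\pi_\alpha(x)]=D_\alpha[\pi_\alpha(x)]$ for every finite $\alpha\subset C(X)$. Up to that point you are doing what the paper does, and you also correctly isolate the real difficulty, namely that this is not yet a proof.

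The gap is the step you flag and then wave away. Factorwise \emph{equality} does not upgrade the conclusion in the inverse limit: from $\pi_\alpha(y)\in D_\alpha[\pi_\alpha(x)]=V_\alpha[\pi_\alpha(x)]=\pi_\alpha(V[x])$ you obtain, for each $\alpha$, some $v_\alpha\in V[x]$ with $\pi_\alpha(v_\alpha)=\pi_\alpha(y)$; since $V[x]$ need not be closed, the resulting net $v_\alpha\to y$ only yields $y\in\mathrm{cls}_XV[x]$, which is Theorem~\ref{thm5.11}(1) again. Your proposed escape---enlarging $\alpha$ along the induction and diagonalizing---cannot repair this: a countable recursion only ever invokes countably many continuous functions on $X$ (equivalently one countable family of continuous pseudo-metrics $\{d_k\}$), and in a non-metrizable compact $\textrm{T}_2$ space such a family need not separate $y$ from every other point, because countability of $T$ gives separability of $X$ (via $X=\mathrm{cls}_XTx$), not first countability or the $G_\delta$ property of points (compare $\beta\mathbb{N}$). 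Hence the subnet limit $w=\lim_m a_m^{-1}z$ produced by your construction is only guaranteed to satisfy $d_k(w,y)=0$ for all $k$: it is \emph{some} point of $V[x]$ agreeing with $y$ on those countably many coordinates, not $y$ itself. Closing this requires a genuinely new ingredient---for instance showing that the sets $V[x]\cap\pi_\alpha^{-1}[\pi_\alpha(y)]$ have nonempty intersection via some compactness of a suitable piece of $V[x]$, or that $y$ can be separated within $D[x]$ by countably many functions---and none is supplied. In fairness, the paper omits these details as well, so your proposal reproduces the paper's sketch together with its unfilled gap rather than an actual proof.
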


\begin{proof}
This follows easily by an argument similar to that of Theorem~\ref{thm5.11} using (2) of Lemma~\ref{lem5.10}. So we omit the details here.
\end{proof}

Let us consider an explicit example.

\begin{exa}[Ellis' ``two circle'' minimal set]
Let $Y_0=\{0\}\times\mathbb{T}$ and $Y_1=\{1\}\times\mathbb{T}$, where $\mathbb{T}$ is the unit circle regarded as the real numbers modulo $2\pi$. Let $X=Y_0\cup Y_1$. $X$ will be topologized by specifying an open-closed neighborhood basis for each point as follows. For $\varepsilon>0$ and $y\in\mathbb{T}$, let
\begin{equation*}\begin{split}
U_\varepsilon(0,y)&=\{0\}\times[y,y+\varepsilon)\cup\{1\}\times(y,y+\varepsilon),\\
U_\varepsilon(1,y)&=\{0\}\times(y-\varepsilon,y)\cup\{1\}\times(y-\varepsilon,y].
\end{split}\end{equation*}
Then provided $X$ with the topology such that $\{U_\varepsilon(0,y)\}_{0<\varepsilon<\pi}$ and $\{U_\varepsilon(1,y)\}_{0<\varepsilon<\pi}$ are open-closed neighborhood bases of each $(0,y)\in Y_0$ and $(1,y)\in Y_1$ respectively, $X$ is a compact $\textrm{T}_2$ space which is not second-countable and therefore non-metrizable.

Let $T=\mathbb{Z}$ and $T\times X\rightarrow X$ by $(n,x)\mapsto nx=(0,y+n)$ if $x=(0,y)$ and $(1,y+n)$ if $x=(1,y)$. Then $(T,X)$ is a minimal flow with phase group $\mathbb{Z}$ such that
$$P[x]=V[x]=D[x]=\{(0,y),(1,y)\}$$
for all $x=(i,y)\in X$, $i=0,1$. Therefore, $(T,X)$ is not an a.a. flow.
\end{exa}

\begin{que}
$\,$
\begin{enumerate}
\item It would be of interest to know whether or not $V[x]$ is dense in $Q[x]$ for all $x\in X$ for a minimal invertible semiflow $(T,X)$; cf.~\cite{AGN} for $(T,X)$ in the flows with invariant measures.

\item Particularly, it is not known to us if (1) of Theorem~\ref{thm5.9} is still true when $T$ is a non-abelian semigroup (e.g. $T$ is an almost right C-semigroup).
\end{enumerate}
\end{que}

\begin{thm}\label{thm5.15}
Let $(T,X)$ be minimal invertible, whose phase space $X$ is a compact metric space. If $P(T,X)=Q(T,X)$, then $V[x]=Q[x]$ for all $x\in X$.
\end{thm}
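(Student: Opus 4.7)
The plan is to reduce the equality to showing $Q[x]\subseteq V[x]$, since the other inclusion $V[x]\subseteq D[x]\subseteq Q[x]$ already follows from Theorem~\ref{thm5.6}. Because $X$ is a compact metric space, Theorem~\ref{thm5.11}(2) gives $V[x]=D[x]$, and by Theorem~\ref{thm5.6}(2), $D[x]=U[x]$, where we recall
\begin{equation*}
U[x]=\{y\in X\,|\,\exists\, y_n\to y\text{ in }X\text{ and }\{t_n\}\subseteq T\text{ with }t_n(x,y_n)\to(x,x)\}.
\end{equation*}
So the task is reduced to: given $y\in Q[x]$, produce a net $\{t_\alpha\}\subseteq T$ and a net $y_\alpha\to y$ in $X$ with $t_\alpha x\to x$ and $t_\alpha y_\alpha\to x$.

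The first step exploits the hypothesis $P(T,X)=Q(T,X)$: since $y\in Q[x]=P[x]$, there is a net $\{s_\lambda\}$ in $T$ and a point $w\in X$ such that $s_\lambda x\to w$ and $s_\lambda y\to w$. To convert these two proximality statements into a single element of the Ellis enveloping semigroup, I would pass to the semigroup $E(X)$ of Definition~\ref{def4.2}, which is compact Hausdorff in the pointwise topology. Passing to a subnet, $s_\lambda\to p$ in $E(X)$, so $px=py=w$.

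Next, I would invoke the standard fact that every compact Hausdorff left-topological semigroup possesses minimal left ideals; let $M\subseteq E(X)$ be one. Since $M$ is closed and $T$-invariant in $E(X)$, and the map $r\mapsto rw$ from $M$ into $X$ is continuous, the set $Mw\subseteq X$ is compact and $T$-invariant. By minimality of $(T,X)$, we have $Mw=X$, so I can pick $u\in M$ with $uw=x$. Setting $q=up\in E(X)$, it follows that $qx=u(px)=uw=x$ and $qy=u(py)=uw=x$. Finally, take a net $\{t_\alpha\}$ in $T$ with $t_\alpha\to q$ pointwise; then $t_\alpha x\to qx=x$ and $t_\alpha y\to qy=x$. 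With the constant net $y_\alpha\equiv y$, we see $y\in U[x]=D[x]=V[x]$, which completes the proof.

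The main obstacle is precisely why one cannot avoid $E(X)$: elements of the enveloping semigroup are generally discontinuous self-maps of $X$, so there is no direct diagonal argument (even with $X$ metric) that composes a proximality net $\{s_\lambda\}$ taking $x,y$ toward a common $w$ with a minimality net $\{r_\mu\}$ satisfying $r_\mu w\to x$ to yield a single net sending both $x$ and $y$ to $x$. The algebraic device $q=up\in M$, using that a minimal left ideal absorbs multiplication on the left, is what replaces this missing continuity and yields the required net witnessing $y\in U[x]$.
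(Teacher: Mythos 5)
Your proof is correct, but it takes a genuinely different route from the paper's. The paper's proof is a two-line citation: since $(T,X)$ is minimal, invertible and metric, Corollary~\ref{cor5.3}(1) gives $P(T,X)\subseteq V(T,X)$ directly, so $Q[x]=P[x]\subseteq V[x]\subseteq Q[x]$ by Theorem~\ref{thm5.6}. Under the hood, Corollary~\ref{cor5.3} produces an \emph{idempotent} $u$ in a minimal left ideal of $E(X)$ with $u(x)=u(y)=y$, concludes that $x$ is well-proximal to $y$, and then invokes Theorem~\ref{thm5.2} (Furstenberg's telescoping construction, where metrizability enters) to land in $V[x]$. You instead aim both points at $x$ rather than at $y$: from $p\in E(X)$ with $px=py=w$ you compose with any $u\in E(X)$ satisfying $uw=x$ (which exists because $E(X)w$ is compact and contains the dense orbit $Tw$ --- the minimal left ideal is harmless but not needed, and no idempotent is required), getting $q=up$ with $qx=qy=x$ and hence $y\in U[x]$. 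You then close via $U[x]=D[x]$ (Theorem~\ref{thm5.6}(2)) and $D[x]=V[x]$ (Theorem~\ref{thm5.11}(2)), so metrizability enters for you through Veech's construction in Lemma~\ref{lem5.10} rather than through Theorem~\ref{thm5.2}. Both routes are legitimate; yours bypasses the well-proximality notion and Theorem~\ref{thm5.2} entirely at the cost of leaning on the harder equality $V[x]=D[x]$. One small inaccuracy in your closing commentary: for $u\in M$ and $p\in E(X)$ the product $up$ lies in $ME(X)$, which a minimal \emph{left} ideal does not absorb (left ideals absorb $E(X)M$), so $q=up$ need not lie in $M$; this is immaterial, since your argument only uses $q\in E(X)$.
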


\begin{proof}
By hypothesis, $P[x]=Q[x]$ for all $x\in X$. Further by Corollary~\ref{cor5.3} and Theorem~\ref{thm5.6}, we see $V[x]=Q[x]$ for all $x\in X$.
\end{proof}

We may consult \cite{C63} for some equivalent conditions of ``$P(X)=Q(X)$'' in the flows. By Lemma~\ref{lem3.2}, $P(X)=Q(X)$ if $(T,X)$ and $(X,T)$ both are l.a.p. invertible semiflows.

\section*{Acknowledgments}
This project was supported by National Natural Science Foundation of China (Grant Nos. 11790274 and 11431012) and PAPD of Jiangsu Higher Education Institutions.


\begin{thebibliography}{10}
\bibitem{Aus}
   \newblock {J.~Auslander},
   \newblock {\it Minimal Flows and Their Extensions}.
   \newblock {North-Holland Math. Studies Vol. \textbf{153}. North-Holland, Amsterdam, 1988}.

\bibitem{AD}
   \newblock {J.~Auslander and X.~Dai},
   \newblock {\it Minimality, distality and equicontinuity for semigroup actions on compact Hausdorff spaces}.
   \newblock {Discret. Contin. Dyn. Syst. (in press). arXiv: 1708.00996v9 [math.DS]}.


\bibitem{AGN}
   \newblock {J.~Auslander, G.~Greschonig and A.~Nagar},
   \newblock {\it Reflections on equicontinuity}.
   \newblock {Proc. Amer. Math. Soc. \textbf{142} (2014), 3129--3137}.

\bibitem{AM}
   \newblock {J.~Auslander and N.~Markley},
   \newblock {\it Locally almost periodic minimal flows}.
   \newblock {J. Difference Equ. Appl. \textbf{15} (2009), 97--109}.


\bibitem{B62}
   \newblock {S.~Bochner},
   \newblock {\it A new approach to almost periodicity}.
   \newblock {Proc. Nat. Acad. Sci. U.S.A., \textbf{48} (1962), 2039--2043}.

\bibitem{CD}
   \newblock {B.~Chen and X.~Dai},
   \newblock {\it On uniformly recurrent motions of topological semigroup actions}.
   \newblock {Discret. Contin. Dyn. Syst. \textbf{36} (2016), 2931--2944}.

\bibitem{C63}
   \newblock {J.\,P.~Clay},
   \newblock {\it Variations on equicontinuity}.
   \newblock {Duke Math. J. \textbf{30} (1963), 423--431}.

\bibitem{DL}
   \newblock {X.~Dai and H.~Liang},
   \newblock {\it On Galvin's theorem for compact Hausdorff right-topological semigroups with dense topological centers}.
   \newblock {Sci. China Math. \textbf{60} (2017), 2421--2428}.

\bibitem{DX}
   \newblock {X.~Dai and Z.~Xiao},
   \newblock {\it Equicontinuity, uniform almost periodicity, and regionally proximal relation for topological semiflows}.
   \newblock {Topology Appl. \textbf{231} (2017), 35--49}.

\bibitem{EEN}
   \newblock {D.~Ellis, R.~Ellis and M.~Nerurkar},
   \newblock {\it The topological dynamics of semigroup actions}.
   \newblock {Trans. Amer. Math. Soc. \textbf{353} (2001), 1279--1320}.

\bibitem{E69}
   \newblock {R.~Ellis},
   \newblock {\it Lectures on topological dynamics}.
   \newblock {Benjamin, New York, 1969}.

\bibitem{F63}
   \newblock {H.~Furstenberg},
   \newblock {\it The structure of distal flows}.
   \newblock {Amer. J. Math. \textbf{85} (1963), 477--515}.

\bibitem{Fur}
   \newblock {H.~Furstenberg},
   \newblock {\it Recurrence in Ergodic Theory and Combinatorial Number Theory}.
   \newblock {Princeton University Press, Princeton, New Jersey, 1981}.

\bibitem{G03}
   \newblock {E.~Glasner},
   \newblock {\it Ergodic Theory via Joinings}.
   \newblock {Math. Surveys and Monographs Vol. \textbf{101}, Amerian Math. Soc., 2003}.

\bibitem{G}
   \newblock {W.\,H.~Gottschalk},
   \newblock {\it Almost periodic points with respect to transformation semi-groups}.
   \newblock {Annals of Math. \textbf{47} (1946), 762--766}.

\bibitem{G56}
   \newblock {W.\,H.~Gottschalk},
   \newblock {\it Characterizations of almost periodic transformation groups}.
   \newblock {Proc. Amer. Math. Soc. \textbf{7} (1956), 709--712}.

\bibitem{GH}
   \newblock {W.\,H.~Gottschalk and G.\,A.~Hedlund},
   \newblock {\it Topological Dynamics}.
   \newblock {Amer. Math. Soc. Coll. Publ., Vol. \textbf{36}, Amer. Math. Soc., Providence, R.I., 1955.}

\bibitem{H44}
   \newblock {G.\,A.~Hedlund},
   \newblock {\it Sturmian minimal sets}.
   \newblock {Amer. J. Math. \textbf{66} (1944), 605--620}.

\bibitem{HS}
   \newblock {N.~Hindman and D.~Strauss},
   \newblock {\it Density in arbitrary semigroups}.
   \newblock {Semigroup Forum \textbf{73} (2006), 273--300}.

\bibitem{Kel}
   \newblock {J.\,L.~Kelley},
   \newblock {\it General Topology}.
   \newblock {GTM \textbf{27}, Springer-Verlag, New York Berlin Heidelberg, 1955}.

\bibitem{KM}
   \newblock {E.~Kontorovich and M.~Megrelishvili},
   \newblock {\it A note on sensitivity of semigroup actions}.
   \newblock {Semigroup Forum \textbf{76} (2008), 133--141}.

\bibitem{L}
   \newblock {L.\,H.~Loomis},
   \newblock {\it An Introduction to Abstract Harmonic Analysis}.
   \newblock {Van Nostrand, Princeton, 1953}.

\bibitem{MW}
   \newblock {D.\,C.~McMahon and T.\,S.~Wu},
   \newblock {\it On weak mixing and local almost periodicity}.
   \newblock {Duke Math. J. \textbf{39} (1972), 333--343}.

\bibitem{Mil}
   \newblock {A.~Miller},
   \newblock {\it A relation between almost automorphic and Levitan almost periodic points in compact minimal flows}.
   \newblock {J. Dynam. Differential Equations \textbf{20} (2008), 519--529}.

\bibitem{Neu}
   \newblock {J.~von Neumann},
   \newblock {\it Almost periodic functions in a group}.
   \newblock {Trans. Amer. Math. Soc. \textbf{36} (1934), 445--492}.

\bibitem{SSY}
   \newblock {G.~Sell, W.~Shen and Y.~Yi},
   \newblock {\it Toplogical dynamics and differential equations}.
   \newblock {Contemp. Math. \textbf{215} (1998), 279--298}.

\bibitem{SY}
   \newblock {W.~Shen and Y.~Yi},
   \newblock {\it Almost automorphic and almost periodic dynamics in skew-product semiflows}.
   \newblock {Mem. Amer. Math. Soc. \textbf{136} (1998), no. 647, x+93 pp}.

\bibitem{V63}
   \newblock {W.\,A.~Veech},
   \newblock {\it Almost automorphic functions}.
   \newblock {Proc. Nat. Acad. Sci. U.S.A., \textbf{49} (1963), 462--464}.

\bibitem{V65}
   \newblock {W.\,A.~Veech},
   \newblock {\it Almost automorphic functions on groups}.
   \newblock {Amer. J. Math. \textbf{87} (1965), 719--751}.

\bibitem{V68}
   \newblock {W.\,A.~Veech},
   \newblock {\it The equicontinuous structure relation for minimal abelian transformation groups}.
   \newblock {Amer. J. Math. \textbf{90} (1968), 723--732}.

\bibitem{V77}
   \newblock {W.\,A.~Veech},
   \newblock {\it Topological dynamics}.
   \newblock {Bull. Amer. Math. Soc. \textbf{83} (1977), 775--830}.
\end{thebibliography}
\end{document}